\theoremstyle{plain}
\newtheorem{theorem}{Theorem}[section]
\newtheorem{lemma}[theorem]{Lemma}
\newtheorem{proposition}[theorem]{Proposition}
\newtheorem{corollary}[theorem]{Corollary}
\newtheorem{conjecture}[theorem]{Conjecture}
\newtheorem{question}[theorem]{Question}
\theoremstyle{definition}
\newtheorem{definition}[theorem]{Definition}
\newcommand{\core}{\ensuremath{\operatorname{core}}}
\newcommand{\allones}{\mathbf{1}}
\newcommand{\ladseg}[1]{$#1$-rung ladder segment}
\newcommand{\mate}[2]{\ensuremath{\Gamma_{#1}(#2)}}
\title{Structure of Cubic Lehman Matrices}
\author{Dillon Mayhew\\
School of Mathematics and Statistics\\
Victoria University of Wellington\\
\texttt{dillon.mayhew@vuw.ac.nz}\\[2ex]
Irene Pivotto, Gordon Royle\\
School of Mathematics and Statistics\\
University of Western Australia\\
\texttt{piv8irene@gmail.com}, \texttt{gordon.royle@uwa.edu.au}}
\begin{document}

\maketitle

\begin{abstract}
A pair $(A,B)$ of square $(0,1)$-matrices is called a \emph{Lehman pair} if $AB^T=J+kI$ for some integer $k\in\{-1,1,2,3,\ldots\}$. In this case $A$ and $B$ are called \emph{Lehman matrices}. This terminology arises because Lehman showed that the rows with the fewest ones in any non-degenerate minimally nonideal (mni) matrix $M$ form a square Lehman submatrix of $M$. Lehman matrices with $k=-1$ are essentially equivalent to \emph{partitionable graphs} (also known as $(\alpha,\omega)$-graphs), so have been heavily studied as part of attempts to directly classify minimal imperfect graphs. In this paper, we view a Lehman matrix as the bipartite adjacency matrix of a regular bipartite graph, focusing in particular on the case where the graph is cubic. From this perspective, we identify two constructions that generate cubic Lehman graphs from smaller Lehman graphs. The most prolific of these constructions involves repeatedly replacing suitable pairs of edges with a particular $6$-vertex subgraph that we call a $3$-rung ladder segment. Two decades ago, L\"{u}tolf \& Margot initiated a computational study of mni matrices and constructed a catalogue containing (among other things) a listing of all cubic Lehman matrices with $k =1$ of order up to $17 \times 17$.  We verify their catalogue (which has just one omission), and extend the computational results to $20 \times 20$ matrices. Of the $908$ cubic Lehman matrices (with $k=1$) of order up to $20 \times 20$, only two do not arise from our $3$-rung ladder construction. However these exceptions can be derived from our second construction, and so our two constructions cover all known cubic Lehman matrices with $k=1$.
\end{abstract}

\section{Introduction}

This paper is concerned with certain square $(0,1)$-matrices that we call \emph{Lehman matrices}\footnote{As detailed below, this terminology differs slightly from that of some previous authors.}, which are defined in the following way.

\begin{definition}
\label{lehmandef}
A pair $(A,B)$ of square $(0,1)$-matrices of the same order is called a \emph{Lehman pair} if $AB^T = J + kI$ for some integer $k\in\{-1,1,2,3,\ldots\}$, where $J$ is the all-ones matrix. An individual matrix is called a \emph{Lehman matrix} if it is in a Lehman pair.
\end{definition}


We say that a $(0,1)$-matrix is $r$-regular if each of its rows and columns sum to $r$. If $(A,B)$ is a Lehman pair, then there are integers $r$ and $s$ so that $A$ is $A$ is $r$-regular, $B$ is $s$-regular, and $k = rs-n$. This was proved by Bridges and Ryser \cite{BR69} for $k>0$. The analogous statement for $k=-1$ follows from work of Padberg \cite{MR0340053} on minimally imperfect graphs, but was given explicitly in matrix form by Chv\'atal, Graham, Perold and Whitesides \cite{MR535235}. In either case, we say that $A$ has \emph{type} $(n,r,s)$ or just that $A$ is an $(n,r,s)$-Lehman matrix (and so $B$ is an $(n,s,r)$-Lehman matrix). 

If $k = -1$, then we say that the Lehman pair is \emph{negative} and that $A$ and $B$ are negative Lehman matrices, and analogously a Lehman pair and its matrices are \emph{positive} if $k > 0$. However we alert the reader to the fact that graphs essentially equivalent to negative Lehman matrices have previously been extensively studied as \emph{partitionable graphs} $(\alpha,\omega)$ partitionable graphs or simply \emph{$(\alpha,\omega)$-graphs}. (We will elaborate on these connections below, but details may be found in Boros, Gurvich and Hougardy \cite{MR1936943} and the references therein.)  As we wish to emphasize the parallels between positive and negative Lehman matrices and the constructions that create positive Lehman matrices from negative ones (and vice versa), we will still use the term ``negative Lehman matrix'' in the context of this paper.

A small Lehman pair is shown in Figure~\ref{fig:fano}; in this case $A$ is the point-line incidence matrix of the Fano plane and $B = A$ is the same matrix. In this example, $r = s = 3$ and $k = 2$.

\begin{figure}[htb]
\begin{center}
\small
\setlength\arraycolsep{3pt}
\[
\left[\begin{array}{rrrrrrr}
1 & 1 & 0 & 1 & 0 & 0 & 0 \\
0 & 1 & 1 & 0 & 1 & 0 & 0 \\
0 & 0 & 1 & 1 & 0 & 1 & 0 \\
0 & 0 & 0 & 1 & 1 & 0 & 1 \\
1 & 0 & 0 & 0 & 1 & 1 & 0 \\
0 & 1 & 0 & 0 & 0 & 1 & 1 \\
1 & 0 & 1 & 0 & 0 & 0 & 1
\end{array}\right]
\left[\begin{array}{rrrrrrr}
1 & 0 & 0 & 0 & 1 & 0 & 1 \\
1 & 1 & 0 & 0 & 0 & 1 & 0 \\
0 & 1 & 1 & 0 & 0 & 0 & 1 \\
1 & 0 & 1 & 1 & 0 & 0 & 0 \\
0 & 1 & 0 & 1 & 1 & 0 & 0 \\
0 & 0 & 1 & 0 & 1 & 1 & 0 \\
0 & 0 & 0 & 1 & 0 & 1 & 1
\end{array}\right]
=
\left[\begin{array}{rrrrrrr}
3 & 1 & 1 & 1 & 1 & 1 & 1 \\
1 & 3 & 1 & 1 & 1 & 1 & 1 \\
1 & 1 & 3 & 1 & 1 & 1 & 1 \\
1 & 1 & 1 & 3 & 1 & 1 & 1 \\
1 & 1 & 1 & 1 & 3 & 1 & 1 \\
1 & 1 & 1 & 1 & 1 & 3 & 1 \\
1 & 1 & 1 & 1 & 1 & 1 & 3
\end{array}\right]
\]
\end{center}
\caption{The Fano plane gives a $(7,3,3)$-Lehman matrix}
\label{fig:fano}
\end{figure}

Both positive and negative Lehman matrices originally arose in the context of two areas of research generally viewed as part of combinatorial optimization, namely the search for \emph{minimal imperfect graphs} and for \emph{minimal nonideal clutters}. In the next two subsections we give a brief overview of the background and prior literature in each case.

\subsection{Minimal Imperfect Graphs}

A graph is \emph{perfect} if it, and all of its induced subgraphs, have chromatic number equal to the size of the maximum clique. In the 1960s, Berge famously conjectured that a graph is perfect if and only if it does not have an odd cycle or the complement of an odd cycle as an induced subgraph. An equivalent formulation of the Strong Perfect Graph Conjecture is that the only \emph{minimal imperfect graphs} (i.e., minimal with respect to induced subgraphs) are the odd cycles and their complements. As a result, there is a substantial body of work dedicated to elucidating more and more properties of minimal imperfect graphs, with the intention of proving Berge's conjecture by showing that these conditions were sufficiently onerous that they could be met only by odd cycles and their complements.


Lov\'asz \cite{MR0302480} initiated this line of enquiry by showing that if $G$ is a minimal imperfect graph with maximum clique of size $\omega(G)$ and maximum coclique of size $\alpha(G)$, then $G$ has the following properties: $|V(G)| = \alpha(G) \omega(G) + 1$ and for every $v \in V(G)$, the set $V(G) \backslash \{v\}$ can be partitioned into $\alpha(G)$ cliques of size $\omega(G)$ and $\omega(G)$ cocliques of size $\alpha(G)$.

Padberg \cite{MR0340053} derived a number of even stronger properties, including the fact that a minimal imperfect graph $G$ must have exactly $|V(G)|$ cliques of size $\omega(G)$ and $|V(G)|$ cocliques of size $\alpha(G)$, and that each $\omega(G)$-clique is disjoint from a unique $\alpha(G)$-coclique. Although Padberg's properties appear very restrictive, Bland, Huang and Trotter \cite{MR534949} showed that
they satisfied by a much larger class of graphs, namely the class of \emph{partitionable graphs}, which are defined as follows: An $(r,s)$-partitionable graph $G$ is a graph for which there are integers $r$, $s$ so that $|V(G)| = rs+1$, and for any vertex $v$, the set $V(G) \backslash \{v\}$ has a partition into $r$ cliques of size $s$ and $s$ cocliques of size $r$. They showed that in this case $r = \alpha(G)$ and $s=\omega(G)$, and so such graphs are also known as $(\alpha,\omega)$-partitionable graphs, or just $(\alpha, \omega)$-graphs. 

Chv\'atal, Graham, Perold and Whitesides \cite{MR535235} expressed these results in matrix terms, observing that associated with any $(\alpha,\omega)$-graph, there are two $0/1$-matrices $A$, $B$ satisfying $AB^T = J-I$. The rows of $A$ and $B$ are the characteristic vectors of the cliques of $G$, and the cocliques of $G$, respectively. Conversely, given a negative Lehman matrix, the graph whose edge set is the union of cliques on the support of each row is an $(\alpha,\omega)$-graph. As it may be possible to add one or more edges to an $(\alpha,\omega)$-graph without changing the set of maximum cliques or cocliques, several different $(\alpha,\omega)$ graphs may yield the same pair of matrices. To avoid this minor irritation, an $(\alpha, \omega)$-graph is called \emph{normalized} if every edge lies in a maximum clique, and then normalized $(\alpha,\omega)$-graphs are essentially the same as negative Lehman matrices.

Although the existence of $(\alpha,\omega)$-graphs sharing so many stringent properties with minimal imperfect graphs complicated the situation, it was still the case that characterising $(\alpha,\omega)$-graphs would resolve the Strong Perfect Graph Conjecture. Consequently a steady stream of theoretical and computational results appeared, variously determining additional properties of $(\alpha,\omega)$-graphs (e.g. Seb\H{o} \cite{MR1402142}), finding all small $(\alpha, \omega)$-graphs (e.g. Lam, Swiercz, Thiel and Regener \cite{MR593715},  Whitesides \cite{MR676544} and Boros, Gurvich and Hougardy \cite{MR1936943}), and finding new constructions for $(\alpha, \omega)$-graphs \cite{MR1936943}. 

 Amongst all these results, it is the paper of Boros, Gurvich and Hougardy \cite{MR1936943} that is the most directly relevant to our work. In addition to giving cryptomorphic definitions of partitionable graphs in graph-theoretic, matrix-theoretic and geometric terms, they devised a recursive technique for constructing larger $(\alpha,\omega)$-graphs from smaller ones. Although phrased in entirely different language, our $3$-rung ladder extension applied to negative Lehman graphs is just a rediscovery of their recursive technique. The concepts and terminology can more or less be directly translated between the two settings, at least for cubic graphs --- for example, what we call a ``$3$-rung ladder'' appears in their paper as a ``gem''. Although ultimately equivalent, some concepts and structures are easier to manipulate in $(\alpha,\omega)$-graphs than in our bipartite graphs, and vice versa. With more structural constraints available, it is easier to compute $(\alpha,\omega)$-graphs than negative cubic Lehman graphs. In particular, Boros, Gurvich and Hougardy \cite{MR1936943} constructed all $(\alpha, 3)$-graphs on up to $22$ vertices, which is one case further than the extent of our computations. Fortunately, in all the cases of overlap our numbers agreed.

Eventually, the Strong Perfect Graph Theorem was proved by Chudnovsky, Robertson, Seymour and Thomas \cite{MR2233847}, and although their work built on some of the known properties of minimal imperfect graphs, it primarily used techniques from structural graph theory. In particular, it did not use or create a characterisation of $(\alpha,\omega)$-graphs. Despite this, there is still interest in the class of $(\alpha,\omega)$-graphs, and it is not impossible that a characterisation of them could yet lead to an alternative proof of the Strong Perfect Graph Theorem.

\subsection{Minimal Nonideal Clutters}

The connection to combinatorial optimization arises from attempts to classify \emph{minimally nonideal clutters}. Here, a \emph{clutter} (also known as a \emph{Sperner family}) is a pair $\mathcal{C} =(V,E)$ where $V$ is a finite set and $E \subseteq 2^V$ is a set of subsets of $V$ such that no element of $E$ contains another. The elements of $V$ are usually called the \emph{vertices} of the clutter, and those of $E$ the \emph{hyperedges} (or just \emph{edges}) of the clutter. A clutter can be represented by a $(0,1)$-matrix, with rows indexed by $E$, columns indexed by $V$, and where each row is the incidence vector of the corresponding hyperedge.  Conversely, any $(0,1)$-matrix with the property that there is no row whose support contains the support of another row is a \emph{clutter matrix} (i.e., the matrix of some clutter). We will often blur the distinction between a clutter and its matrix. 

If $\mathcal{C}$ is a clutter with an $m \times n$ clutter matrix $A$, then $\mathcal{C}$ (and also $A$) is called \emph{ideal} if the polyhedron
$
Q(A) = \{x \in \mathbb{R}^n : A x \succeq \mathbf{1} \text{ and } x \succeq \mathbf{0}\} 
$
has \emph{integral} vertices. Here $\mathbf{0}$ and $\mathbf{1}$ represent the all-$0$ and all-$1$ vectors respectively and $\succeq$ indicates that the inequality holds for each coordinate. If a clutter matrix $A$ is ideal, then any \emph{integer} program with coefficient matrix $A$ has the same solutions as its \emph{linear} program relaxation (where the integer requirement is dropped). As integer programs are computationally hard to solve and linear programs computationally feasible, this is a desirable situation, and hence one that we wish to better understand. 

There are notions of \emph{deletion} and \emph{contraction}, and hence \emph{minors}, for clutters that are reminiscent of the same notions for graphs or matroids. If $\mathcal{C}$ is a clutter and $v$ is a vertex of $\mathcal{C}$, then $\mathcal{C} \backslash v$ ($\mathcal{C}$ \emph{delete} $v$) is the clutter with vertex set $V \backslash \{v\}$ whose hyperedges are the hyperedges of $\mathcal{C}$ that do not contain $v$. The clutter $\mathcal{C} / v$ ($\mathcal{C}$ \emph{contract} $v$) is the clutter with vertex set $V \backslash \{v\}$ whose hyperedges are the minimal sets (under inclusion) of the form $H \backslash \{v\}$ where $H$ is a hyperedge of $\mathcal{C}$. In matrix terms, if $A$ is a clutter matrix and $c$ a column, then $A\backslash c$ is obtained by deleting any row that contains a $1$ in column $c$, and then deleting the entire column. The contraction $A/c$ is produced by first deleting column $c$, and then deleting any rows whose support is no longer minimal under set inclusion. Any clutter (or clutter matrix) obtained by a possibly-empty sequence of deletions and contractions is a \emph{minor} of the original clutter (or clutter matrix).

Any minor of an ideal clutter is itself ideal, which raises the possibility of an \emph{excluded-minor} characterisation of ideal clutters. Thus we define a clutter, or a clutter matrix, to be \emph{minimally nonideal} (mni) if it is not ideal, but every proper minor is ideal. The \emph{weight} of a $0/1$-vector is the number of ones in the vector. Lehman \cite{Leh89} proved the seminal result that if $A$ is an mni clutter matrix, then either $A$ belongs to a particular sporadic family (the degenerate projective planes) or the rows of $A$ of minimum weight form a (positive) Lehman matrix as defined in Definition~\ref{lehmandef}. Therefore we may assume that the first $n$ rows of any mni clutter matrix of order $m \times n$ form a positive Lehman matrix.
This raises the possibility of a two-stage approach to understanding mni matrices, namely first characterise Lehman matrices and then understand how additional rows can be added to a Lehman matrix to form a larger mni matrix. Unfortunately, this latter step appears to be extremely difficult because the property of being mni does not behave nicely under addition of rows. In particular, it is possible that adding a row to an mni matrix may result in one that is not mni, and conversely. Cornu\'ejols and Guenin \cite{MR1922337} give a readable and comprehensive treatment of ideal clutters that provides useful additional background and a wider context to this work than we have given here. 

More than 20 years ago, L\"utolf \& Margot \cite{LM98} conducted a computational search based on these observations in order to provide a collection of small mni matrices. They observed that \emph{``we lack a good understanding of the structure of mni matrices''}, and hoped to provide a significant number of examples of mni matrices in the hope that further study would shed light on their structure. For particular values of $r$, they implemented an \emph{orderly algorithm} \cite{MR0491273} to produce a complete list (up to permutations of rows and columns) of $r$-regular $(0,1)$-matrices and then extracted the Lehman matrices from this list. They identified the Lehman matrices that are already mni (without adding any rows) and used a heuristic search to produce non-square mni matrices by adding additional rows to each Lehman matrix. Their results mostly cover the cases where $r=3$, the matrices have order at most $17 \times 17$, and $k = 1$. The constraints on size and valency are consequences of the very rapid increase in the numbers of regular bipartite graphs as the size, and especially the valency, increases. Lehman matrices with $k > 1$ appear to be very rare, with the incidence matrices of projective planes being the only known infinite family and the adjacency matrices  of the Moore graphs giving a handful of sporadic examples. We note that this takes the \emph{usual} adjacency matrix, and then treats it as a clutter matrix.

In this paper, we consider in detail the structure of Lehman matrices by viewing them as bipartite graphs in the following manner. 
An $r$-regular $(0,1)$-matrix can be viewed as the \emph{bipartite adjacency matrix} of an $r$-regular bipartite graph and vice versa, so we say that a graph is a \emph{Lehman graph} if its bipartite adjacency matrix is a Lehman matrix.  We note that this differs from the $(\alpha,\omega)$-graph associated with a negative Lehman matrix, where each row of the matrix determines a clique of the graph, and every clique of the graph corresponds to a row of the matrix. Using the analogous graph for positive Lehman matrices immediately runs into problems because the graph usually has \emph{more}cliques than there are rows in the matrix. 

A matrix of order $n \times n$ corresponds to a bipartite graph of order $2n$ with $n$ black and $n$ white vertices. We primarily consider the case when $r=3$, where both the theoretical and computational tools give us most traction, and we call these graphs \emph{cubic} Lehman graphs. Figures~\ref{fig:leh22_01} and~\ref{fig:leh22_02} show all four cubic Lehman graphs on $22$ vertices. It is immediately apparent that they are qualitatively rather similar and in particular all of them seem to be very ``\emph{ladder-like}". The first graph of Figure~\ref{fig:leh22_01} actually \emph{is} the cubic M\"obius ladder of order $22$, while the others all appear to consist of \emph{ladder segments} of varying lengths connected together. We shall see that this is no accident and that a single construction technique involving the replacing of suitable pairs of edges by $6$-vertex ladder segments accounts for almost all of the known cubic Lehman graphs.

\begin{figure}[htb]
\begin{center}
\begin{tikzpicture}[scale=1.1]
\tikzstyle{rowvertex}=[circle,fill=white,draw=black ,inner sep = 0.6mm]
\tikzstyle{colvertex}=[circle,fill=black,draw=black ,inner sep = 0.6mm]
\foreach \x in {0,2,4,6,8,10,11} {
  \node [rowvertex] (v\x) at (106.36+360*\x/11:1.25cm) {};
  \node [colvertex] (w\x) at (106.36+360*\x/11:1.75cm) {};
  \draw (v\x)--(w\x);
}
\foreach \x in {1,3,5,7,9} {
  \node [colvertex] (v\x) at (106.36+360*\x/11:1.25cm) {};
  \node [rowvertex] (w\x) at (106.36+360*\x/11:1.75cm) {};
  \draw (v\x)--(w\x);
}
\draw (v0)--(v1)--(v2)--(v3)--(v4)--(v5)--(v6)--(v7)--(v8)--(v9)--(v10)--(w0)--(w1)--(w2)--(w3)--(w4)--(w5)--(w6)--(w7)--(w8)--(w9)--(w10)--(v0);
\end{tikzpicture}
\hspace{1cm}
\begin{tikzpicture}[scale=1.1]
\tikzstyle{rowvertex}=[circle,fill=white,draw=black ,inner sep = 0.6mm]
\tikzstyle{colvertex}=[circle,fill=black,draw=black ,inner sep = 0.6mm]
\foreach \x in {2,4,6,8} {
  \node [rowvertex] (v\x) at (106.36+360*\x/11:1.25cm) {};
  \node [colvertex] (w\x) at (106.36+360*\x/11:1.75cm) {};
  \draw (v\x)--(w\x);
}
\foreach \x in {3,5,7} {
  \node [colvertex] (v\x) at (106.36+360*\x/11:1.25cm) {};
  \node [rowvertex] (w\x) at (106.36+360*\x/11:1.75cm) {};
  \draw (v\x)--(w\x);
}

\node [rowvertex] (w9) at (-0.35,0.5) {};
\node [colvertex](w10) at (-0.35,0.9) {};
\node [rowvertex](v0) at (-0.35,1.3) {};
\node [colvertex](v1) at (-0.35,1.7) {};

\node [colvertex](v9) at (0.35,0.5) {};
\node [rowvertex](v10) at (0.35,0.9) {};
\node [colvertex](w0) at (0.35,1.3) {};
\node [rowvertex](w1) at (0.35,1.7) {};
\draw (v0)--(w0);
\draw (v1)--(w1);
\draw (v10)--(w10);
\draw (v9)--(w9);

\draw (v0)--(v1)--(v2)--(v3)--(v4)--(v5)--(v6)--(v7)--(v8)--(v9)--(v10)--(w0)--(w1);
\draw (w2)--(w3)--(w4)--(w5)--(w6)--(w7)--(w8);
\draw (w9)--(w10)--(v0);
\draw  (w2)--(w9);
\draw (w1)--(w8);
\end{tikzpicture}

\end{center}
\caption{Two $(11,3,4)$-Lehman graphs}
\label{fig:leh22_01}
\end{figure}
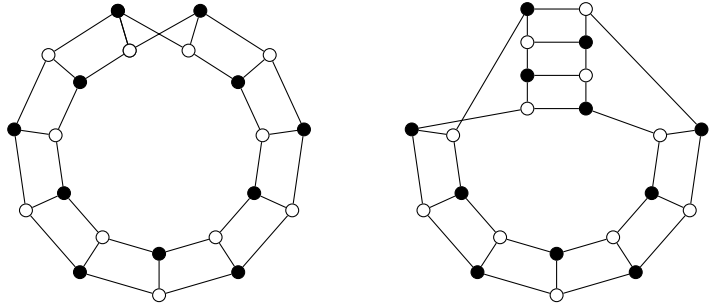

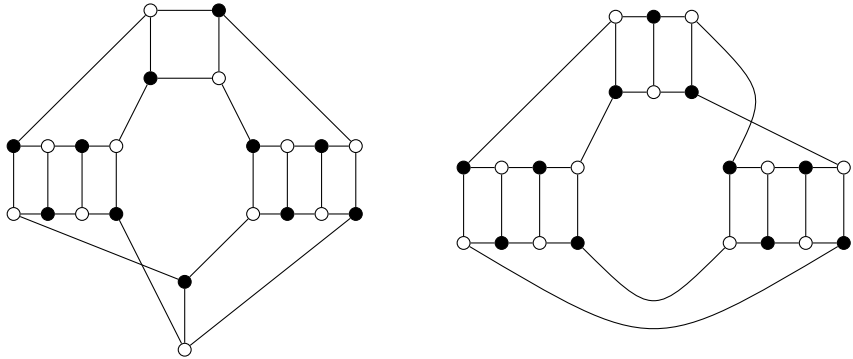
\begin{figure}[htb]
\begin{center}
\begin{tikzpicture} [scale = 0.9]
\tikzstyle{bvertex}=[circle,fill=black,draw=black ,inner sep = 0.6mm]
\tikzstyle{wvertex}=[circle,fill=white,draw=black ,inner sep = 0.6mm]
\node [wvertex] (v0) at (1,-0.5) {};
\node [bvertex]  (v1) at (1.5,-0.5) {};
\node [wvertex] (v2) at (2,-0.5) {};
\node [bvertex] (v3) at (2.5,-0.5) {};
\node [bvertex] (w0) at (1,0.5) {};
\node [wvertex]  (w1) at (1.5,0.5) {};
\node [bvertex] (w2) at (2,0.5) {};
\node [wvertex] (w3) at (2.5,0.5) {};

\node [bvertex] (x0) at (-1,-0.5) {};
\node [wvertex]  (x1) at (-1.5,-0.5) {};
\node [bvertex] (x2) at (-2,-0.5) {};
\node [wvertex] (x3) at (-2.5,-0.5) {};
\node [wvertex] (y0) at (-1,0.5) {};
\node [bvertex]  (y1) at (-1.5,0.5) {};
\node [wvertex] (y2) at (-2,0.5) {};
\node [bvertex] (y3) at (-2.5,0.5) {};

\node (b0) [bvertex] at (-0.5,1.5) {};
\node (b1) [wvertex] at (-0.5,2.5) {};
\node (b2) [wvertex] at (0.5,1.5) {};
\node (b3) [bvertex] at (0.5,2.5) {};

\node (a0) [bvertex] at (0,-1.5) {};
\node (a1) [wvertex] at (0,-2.5) {};

\draw (v0)--(v1)--(v2)--(v3)--(w3)--(w2)--(w1)--(w0)--(v0);
\draw (v1)--(w1);
\draw (v2)--(w2);

\draw (x0)--(x1)--(x2)--(x3)--(y3)--(y2)--(y1)--(y0)--(x0);
\draw (x1)--(y1);
\draw (x2)--(y2);

\draw (a0)--(a1);
\draw (b0)--(b1)--(b3)--(b2)--(b0);

\draw (w0)--(b2);
\draw (w3)--(b3);

\draw (v0)--(a0);
\draw (v3)--(a1);

\draw (a0)--(x3);
\draw (a1)--(x0);

\draw (b0)--(y0);
\draw (b1)--(y3);

\end{tikzpicture}
\hspace{1cm}
\begin{tikzpicture} [scale = 1]
\tikzstyle{bvertex}=[circle,fill=black,draw=black ,inner sep = 0.6mm]
\tikzstyle{wvertex}=[circle,fill=white,draw=black ,inner sep = 0.6mm]
\node [wvertex] (v0) at (1,-0.5) {};
\node [bvertex]  (v1) at (1.5,-0.5) {};
\node [wvertex] (v2) at (2,-0.5) {};
\node [bvertex] (v3) at (2.5,-0.5) {};
\node [bvertex] (w0) at (1,0.5) {};
\node [wvertex]  (w1) at (1.5,0.5) {};
\node [bvertex] (w2) at (2,0.5) {};
\node [wvertex] (w3) at (2.5,0.5) {};

\node [bvertex] (x0) at (-1,-0.5) {};
\node [wvertex]  (x1) at (-1.5,-0.5) {};
\node [bvertex] (x2) at (-2,-0.5) {};
\node [wvertex] (x3) at (-2.5,-0.5) {};
\node [wvertex] (y0) at (-1,0.5) {};
\node [bvertex]  (y1) at (-1.5,0.5) {};
\node [wvertex] (y2) at (-2,0.5) {};
\node [bvertex] (y3) at (-2.5,0.5) {};

\draw (v0)--(v1)--(v2)--(v3)--(w3)--(w2)--(w1)--(w0)--(v0);
\draw (v1)--(w1);
\draw (v2)--(w2);

\draw (x0)--(x1)--(x2)--(x3)--(y3)--(y2)--(y1)--(y0)--(x0);
\draw (x1)--(y1);
\draw (x2)--(y2);

\node [bvertex] (z0) at (-0.5,1.5) {};
\node [wvertex] (z1) at (0,1.5) {};
\node [bvertex] (z2) at (0.5,1.5) {};
\node [wvertex] (zz0) at (-0.5,2.5) {};
\node [bvertex] (zz1) at (0,2.5) {};
\node [wvertex] (zz2) at (0.5,2.5) {};

\draw (z0)--(z1)--(z2)--(zz2)--(zz1)--(zz0)--(z0);
\draw (z1)--(zz1);

\draw (zz2) .. controls (1.5,1.5)  .. (w0);
\draw (z2)--(w3);

\draw (z0)--(y0);
\draw (zz0)--(y3);

\draw (v0) .. controls (0,-1.5) .. (x0);
\draw (v3) .. controls (0,-2) .. (x3);
\end{tikzpicture}

\end{center}
\caption{The other two $(11,3,4)$-Lehman graphs}
\label{fig:leh22_02}
\end{figure}

More precisely, we show that if a cubic Lehman graph with $k= \pm 1$ contains a ladder segment with $3$ rungs, then it can be reduced to a smaller cubic Lehman graph with the same $k$ by removing the ladder segment and adding two edges to repair the regularity. We show that this process can be reversed, and used to construct huge numbers of cubic Lehman matrices with $k=\pm 1$ starting from the cubic planar ladder on $8$ vertices (for $k=-1$) or cubic M\"obius ladder on $10$ vertices (for $k=1$), and then repeatedly \emph{inserting} $3$-rung ladder segments. 

We repeat, verify, and extend L\"utolf \& Margot's computations, in the process discovering that their catalogue of $17 \times 17$ Lehman matrices omitted just one matrix --- the graph corresponding to this matrix is shown in Figure~\ref{lmmissing}. The sole omission is a Lehman graph of type $(17,3,6)$ that has no $4$-rung ladder segment, but that does have $3$-rung ladder segments. It is unclear as to how this graph/matrix was missed as the search described by L\"utolf \& Margot should certainly have constructed it at some stage.

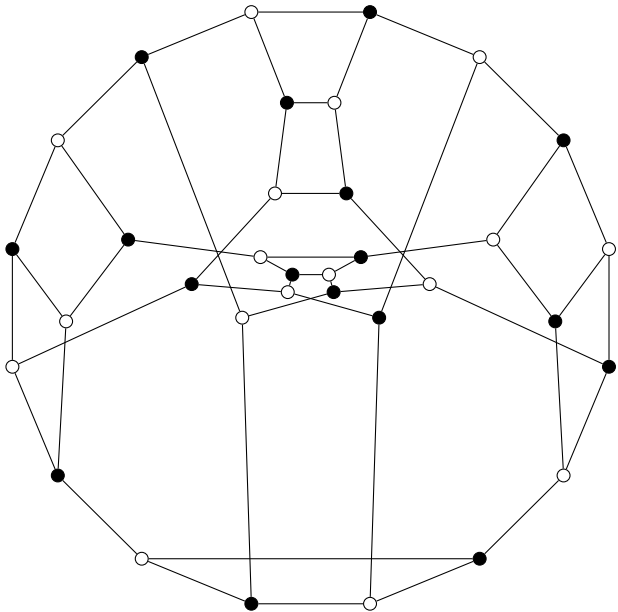
\begin{figure}[htb]
\begin{center}
\begin{tikzpicture}[scale=0.8,rotate=78.75]
\tikzstyle{rowv}=[circle, fill=black, draw=black, inner sep = 0.6mm]
\tikzstyle{colv}=[circle, fill=white, draw=black, inner sep = 0.6mm]    

\node [colv] (v0) at (-1.003558,3.899981) {};
\node [colv] (v1) at (-1.913417,4.619398) {};
\node [colv] (v2) at (1.913417,4.619398) {};
\node [colv] (v3) at (-4.619398,1.913417) {};
\node [colv] (v4) at (0.664717,0.973849) {};
\node [colv] (v5) at (0.182984,0.420792) {};
\node [colv] (v6) at (1.746812,0.945542) {};
\node [colv] (v7) at (-0.378127,1.072266) {};
\node [colv] (v8) at (4.619398,1.913417) {};
\node [colv] (v9) at (-4.619398,-1.913417) {};
\node [colv] (v10) at (-1.913417,-4.619398) {};
\node [colv] (v11) at (0.599792,-0.187200) {};
\node [colv] (v12) at (1.695873,-2.722678) {};
\node [colv] (v13) at (4.619398,-1.913417) {};
\node [colv] (v14) at (3.411659,0.280461) {};
\node [colv] (v15) at (0.768591,-1.841276) {};
\node [colv] (v16) at (1.913417,-4.619398) {};
\node [rowv] (v17) at (0.000000,5.000000) {};
\node [rowv] (v18) at (-3.535534,3.535534) {};
\node [rowv] (v19) at (0.524858,3.164409) {};
\node [rowv] (v20) at (0.005460,1.995244) {};
\node [rowv] (v21) at (3.535534,3.535534) {};
\node [rowv] (v22) at (-5.000000,0.000000) {};
\node [rowv] (v23) at (-3.535534,-3.535534) {};
\node [rowv] (v24) at (0.482498,0.402481) {};
\node [rowv] (v25) at (0.986794,-0.645343) {};
\node [rowv] (v26) at (0.060995,-1.135347) {};
\node [rowv] (v27) at (3.259290,1.046473) {};
\node [rowv] (v28) at (1.975687,-0.205091) {};
\node [rowv] (v29) at (0.330086,-0.318737) {};
\node [rowv] (v30) at (5.000000,0.000000) {};
\node [rowv] (v31) at (0.565291,-3.987158) {};
\node [rowv] (v32) at (0.000000,-5.000000) {};
\node [rowv] (v33) at (3.535534,-3.535534) {};
\draw (v0)--(v17);
\draw (v0)--(v18);
\draw (v0)--(v19);
\draw (v1)--(v17);
\draw (v1)--(v18);
\draw (v1)--(v20);
\draw (v2)--(v17);
\draw (v2)--(v19);
\draw (v2)--(v21);
\draw (v3)--(v18);
\draw (v3)--(v22);
\draw (v3)--(v23);
\draw (v4)--(v19);
\draw (v4)--(v24);
\draw (v4)--(v25);
\draw (v5)--(v20);
\draw (v5)--(v24);
\draw (v5)--(v26);
\draw (v6)--(v20);
\draw (v6)--(v27);
\draw (v6)--(v28);
\draw (v7)--(v21);
\draw (v7)--(v22);
\draw (v7)--(v29);
\draw (v8)--(v21);
\draw (v8)--(v27);
\draw (v8)--(v30);
\draw (v9)--(v22);
\draw (v9)--(v23);
\draw (v9)--(v26);
\draw (v10)--(v23);
\draw (v10)--(v31);
\draw (v10)--(v32);
\draw (v11)--(v24);
\draw (v11)--(v25);
\draw (v11)--(v29);
\draw (v12)--(v25);
\draw (v12)--(v31);
\draw (v12)--(v33);
\draw (v13)--(v26);
\draw (v13)--(v30);
\draw (v13)--(v33);
\draw (v14)--(v27);
\draw (v14)--(v28);
\draw (v14)--(v30);
\draw (v15)--(v28);
\draw (v15)--(v29);
\draw (v15)--(v32);
\draw (v16)--(v31);
\draw (v16)--(v32);
\draw (v16)--(v33);

\end{tikzpicture}
\end{center}
\caption{The ``missing'' Lehman graph on $34$ vertices}
\label{lmmissing}
\end{figure}

The computations also give us some sense of how many of the small cubic Lehman graphs arise from ladder insertions, simply by testing which of them have a $3$-rung ladder. Rather surprisingly, there are only \emph{two} cubic Lehman graphs with $k=1$ on up to $40$ vertices (corresponding to $20 \times 20$ matrices) that do \emph{not} have a $3$-rung ladder segment (Figure~\ref{fig:no3rung} shows the smaller example). The smallest cubic Lehman graph with $k=1$ is the M\"obius ladder on $10$ vertices, which is a $(5,3,2)$-Lehman graph. Therefore all except two cubic Lehman graphs (with $k=1$) on up to $40$ vertices arise from the M\"obius ladder on $10$ vertices by iterated ladder insertion.

The two exceptional cubic Lehman graphs having no $3$-rung ladder are also highly structured, in that their vertices can be partitioned into $4$-cycles. Motivated by this example, we describe a second reduction operation, which involves replacing the $4$-cycles with edges, thereby ``compressing'' a cubic Lehman graph with $k=1$ into a smaller cubic Lehman graph, but this time with $k=-1$. Unlike ladder insertion and its reverse, this construction is applicable to Lehman graphs of higher valency.  If an $r$-regular Lehman graph with $k= \pm 1$ can be partitioned into copies of the complete bipartite graph $K_{r-1,r-1}$ (which we denote \emph{bicliques}) then each biclique can be compressed to a single edge leaving a smaller $r$-regular graph with $k=\mp 1$ (respectively). We call this operation \emph{biclique compression} and we determine the circumstances under which it can be reversed (\emph{biclique expansion}) thereby producing a second construction technique for Lehman graphs. The square mni matrices discovered by Wang \cite{MR2739489} have the property that their vertices can be partitioned into copies of $K_{r-1,r-1}$ and so are instances of this construction.

\begin{figure}[htb]
\begin{center}
\begin{tikzpicture}[scale=1.25]
\tikzstyle{rowv}=[circle, fill=black, draw=black, inner sep = 0.6mm]
\tikzstyle{colv}=[circle, fill=white, draw=black, inner sep = 0.6mm]
\foreach \x in {0,2,4,6,8,10,12} {
  \node [rowv] (ir\x) at (\x*360/14:1.5cm) {};
  \node [rowv] (or\x) at (\x*360/14:2cm) {};
}
\foreach \x in {1,3,5,7,9,11,13} {
  \node[colv] (ic\x) at (\x*360/14:1.5cm) {};
  \node [colv] (oc\x) at (\x*360/14:2cm) {};
}

\foreach \x/\y in {0/1,2/3,4/5,6/7,8/9,10/11,12/13} {
 \draw (ir\x)--(ic\y);
 \draw (ir\x)--(oc\y);
 \draw (or\x)--(ic\y);
 \draw (or\x)--(oc\y);
}

\draw (oc1)--(or2);
\draw (oc3)--(or4);
\draw (oc5)--(or6);
\draw (oc7)--(or8);
\draw (oc9)--(or10);
\draw (oc11)--(or12);
\draw (oc13)--(or0);

\draw (ic1)--(ir4);
\draw (ic3)--(ir6);
\draw (ic5)--(ir8);
\draw (ic7)--(ir10);
\draw (ic9)--(ir12);
\draw (ic11)--(ir0);
\draw (ic13)--(ir2);

\end{tikzpicture}
\end{center}
\caption{A $(14,3,5)$-Lehman graph with no $3$-rung ladder segment}
\label{fig:no3rung}
\end{figure}
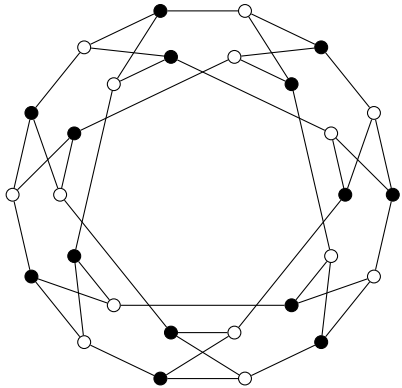

We note that it is always possible to insert enough $3$-rung ladder segments into a cubic Lehman graph with $k=1$ to ensure that the vertices of the resulting graph can be partitioned into $4$-cycles. Therefore every cubic Lehman graph with $k=1$ can be obtained from a negative cubic Lehman graph by a combination of biclique expansion followed by $3$-rung ladder reduction. In principle then, it suffices to characterise cubic negative Lehman graphs.

The paper is structured as follows: Section~\ref{prelims} contains all necessary background, definitions and notation for what follows. Section~\ref{ladders} gives a detailed analysis of the ladder reduction and insertion operations, while Section~\ref{completebipartite} does the same for biclique compression and expansion. Section~\ref{lehmancat} gives the results of a computer search for cubic Lehman graphs (with $k=\pm 1$) of order up to $20 \times 20$. Subsequent analysis of the data reveals that all of these Lehman graphs arise from the repeated application of our constructions (mostly ladder insertion) from a tiny number of base graphs.

Section~\ref{ppcore} addresses the question of when the square submatrix formed by the minimum weight rows of a minimally nonideal matrix is the point-line incidence matrix of a projective plane. It is known that the point-line incidence matrix of the Fano plane (with no added rows) is mni. We conjecture that no other mni matrices, square or otherwise, can be obtained by adding (zero or more) rows to the point-line incidence matrix of a projective plane. We prove that the conjecture holds if the projective plane is the Fano plane $\mathrm{PG}(2,2)$ or the ternary plane $\mathrm{PG}(2,3)$. 

\section{Preliminaries}
\label{prelims}

In this section we establish some the basic results regarding Lehman matrices that we will need later. Although many of these results apply equally for $k=-1$ and $k>0$, most previous authors have focused on one case or the other. Indeed, a substantial majority of the literature is devoted to the case $k=-1$ because, as outlined above, negative Lehman matrices are essentially equivalent to normalized $(\alpha,\omega)$-graphs about which a great deal is known.



The following theorem is due to Bridges \& Ryser \cite{BR69} for $k>0$, although their proof works unchanged for $k=-1$.
\begin{theorem}
\label{BridgesRyser}
Let $A$ and $B$ be $n\times n$ non-negative integral matrices with $n>1$ such that $AB^T=J+kI$, where $k$ is in $\{-1,1,2,3,\ldots\}$. 
Then $B^TA=AB^T$ and there are integers $r$, $s$ such that $A$ is $r$-regular, $B$ is $s$-regular and $rs = n + k$.
\qed \end{theorem}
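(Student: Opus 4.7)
My plan is to deduce the three conclusions---commutativity $B^TA = AB^T$, regularity of $A$ and $B$, and the identity $rs = n+k$---in that order of logical dependence, starting from structural consequences of $AB^T = J + kI$. First, the eigenvalues of $J + kI$ are $n+k$ (on the line spanned by $\mathbf{1}$) and $k$ (on its orthogonal complement), so under the hypotheses $k \in \{-1,1,2,\ldots\}$ and $n > 1$ both are nonzero; hence $J + kI$ is invertible, forcing $A$ and $B$ to be invertible. Taking the transpose of the defining equation (using symmetry of $J + kI$) yields $BA^T = J + kI$ for free; this will be the workhorse alongside the original equation.

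Next, I would right-multiply both equations by $\mathbf{1}$ and left-multiply by $\mathbf{1}^T$ to obtain
\[
A\mathbf{c}_B \;=\; A\mathbf{r}_B \;=\; B\mathbf{c}_A \;=\; B\mathbf{r}_A \;=\; (n+k)\mathbf{1},
\]
where $\mathbf{r}_M, \mathbf{c}_M$ denote the row- and column-sum vectors of $M$. Invertibility of $A$ and $B$ immediately forces $\mathbf{r}_A = \mathbf{c}_A$ and $\mathbf{r}_B = \mathbf{c}_B$: row and column sums coincide in each matrix. Moreover $A^{-1}\mathbf{1} = \tfrac{1}{n+k}\mathbf{c}_B$, so conjugating $AB^T$ by $A$ yields the key identity
\[
B^TA \;=\; A^{-1}(J + kI)A \;=\; \tfrac{1}{n+k}\mathbf{c}_B \mathbf{c}_A^T + kI,
\]
a rank-one-plus-scalar matrix whose entries must all be non-negative integers; in particular, $(n+k) \mid (\mathbf{c}_B)_i(\mathbf{c}_A)_j$ for all $i,j$.

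The heart of the argument---and the step I expect to be the main obstacle---is to upgrade this to show that $\mathbf{c}_A$ and $\mathbf{c}_B$ are each constant multiples of $\mathbf{1}$, which is equivalent to regularity of $A$ and $B$. My approach is to combine Perron--Frobenius with the integrality constraints. The matrix $AB^T = J + kI$ is irreducible non-negative (positive when $k \geq 1$; irreducible for $k = -1$ since $n > 1$), so by Perron--Frobenius its Perron root $n+k$ is simple with unique positive eigenvector $\mathbf{1}$. Since $B^TA$ is similar to $AB^T$ via conjugation by $A$, it inherits the same spectrum, and its positive Perron eigenvector is proportional to $A^{-1}\mathbf{1} = \tfrac{1}{n+k}\mathbf{c}_B$. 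Applying the analogous reasoning to $BA^T = J + kI$ yields the dual identity $A^TB = \tfrac{1}{n+k}\mathbf{c}_A\mathbf{c}_B^T + kI$. Combining these rank-one-plus-scalar descriptions with the non-negativity and divisibility conditions (and using that the column-sum vectors are positive, else $A$ or $B$ would be singular) should force $\mathbf{c}_A = r\mathbf{1}$ and $\mathbf{c}_B = s\mathbf{1}$ for positive integers $r,s$.

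Once regularity is in hand, the remaining conclusions follow immediately. Substituting $\mathbf{c}_A = r\mathbf{1}$ and $\mathbf{c}_B = s\mathbf{1}$ into the key identity gives $B^TA = \tfrac{rs}{n+k}J + kI$; taking row sums in $AB^T = J + kI$ yields $rs = n+k$ (each row of the left side sums to $rs$ since $A$ is $r$-regular and $B$ is $s$-regular, while each row of the right side sums to $n+k$), and therefore $B^TA = J + kI = AB^T$, the asserted commutativity.
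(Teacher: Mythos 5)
The paper offers no proof of this theorem --- it simply cites Bridges and Ryser --- so your proposal has to stand on its own. Its framework is sound: invertibility of $A$ and $B$, the transposed equation $BA^T=J+kI$, and the identity $B^TA=kI+\tfrac{1}{n+k}\mathbf{c}_B\mathbf{c}_A^T$ are all correct and are the standard opening moves. But the step you yourself flag as the heart of the argument is not actually carried out, and the tools you name for it do not close it. Perron--Frobenius applied to $B^TA$ only tells you that its Perron eigenvector is the positive vector $A^{-1}\mathbf{1}=\tfrac{1}{n+k}\mathbf{c}_B$, which is information you already have from the rank-one-plus-scalar form; it says nothing about $\mathbf{c}_B$ being constant. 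Likewise the divisibility condition $(n+k)\mid(\mathbf{c}_B)_i(\mathbf{c}_A)_j$ alone cannot force constancy --- it is satisfied, for example, by $\mathbf{c}_B=(n+k)\mathbf{1}$ with $\mathbf{c}_A$ arbitrary. The missing idea is the trace identity: since $\operatorname{tr}(B^TA)=\operatorname{tr}(AB^T)=n(k+1)$, the rank-one form gives $\sum_i(\mathbf{c}_B)_i(\mathbf{c}_A)_i=n(n+k)$. Each summand $(\mathbf{c}_B)_i(\mathbf{c}_A)_i/(n+k)$ is a positive integer (for $k\geq 1$ because the column sums are positive and the quotient is a nonzero non-negative integer; for $k=-1$ because the diagonal entries of $B^TA$ are non-negative integers equal to $k$ plus that quotient), so each product $(\mathbf{c}_B)_i(\mathbf{c}_A)_i$ equals exactly $n+k$. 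Then the off-diagonal integrality gives $(\mathbf{c}_B)_i(\mathbf{c}_A)_j\geq n+k=(\mathbf{c}_B)_i(\mathbf{c}_A)_i$ for all $i,j$, hence $(\mathbf{c}_A)_j\geq(\mathbf{c}_A)_i$ for all $i,j$, so $\mathbf{c}_A$ is constant, and symmetrically so is $\mathbf{c}_B$. With that supplied, your endgame goes through and yields $B^TA=J+kI=AB^T$ and $rs=n+k$.

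A second, smaller flaw: the chain $A\mathbf{c}_B=A\mathbf{r}_B=B\mathbf{c}_A=B\mathbf{r}_A=(n+k)\mathbf{1}$ is not what multiplying $AB^T=J+kI$ and $BA^T=J+kI$ by $\mathbf{1}$ on either side produces. Those manipulations yield only $A\mathbf{c}_B=(n+k)\mathbf{1}$ and $B\mathbf{c}_A=(n+k)\mathbf{1}$; the row-sum vectors $\mathbf{r}_A=A\mathbf{1}$ and $\mathbf{r}_B=B\mathbf{1}$ never enter, since evaluating $A\mathbf{r}_B=AB\mathbf{1}$ would require knowledge of $AB$, about which there is no hypothesis. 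So the deduction that row and column sums of each matrix coincide is unjustified at that point in your argument. It is harmless only because you do not need it until the end, where it follows from the constancy of $\mathbf{c}_A$ and $\mathbf{c}_B$ together with $A\mathbf{c}_B=(n+k)\mathbf{1}$ and the count $\mathbf{1}^TA\mathbf{1}=nr$.
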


Thus, if $A$ is a Lehman matrix, then $A$ has constant row- and column-sum, and moreover, the matrix $B$ that satisfies $AB^T=J+kI$ is also a Lehman matrix. Given a non-singular $(0,1)$-matrix $A$ and an integer $k$, the only possible matrix that might form a Lehman pair with $A$ is
\[
B = \left( A^{-1} (J + k I) \right)^T
\]
so $(A,B)$ is a Lehman pair if and only if $B$ is a $(0,1)$-matrix. However a matrix can belong to two different Lehman pairs --- the bipartite adjacency matrix of the $6$-cycle is a $(3,2,1)$ Lehman matrix and also a $(3,2,2)$ Lehman matrix. 

\begin{corollary}
\label{cor1}
Let $A$ be a Lehman matrix satisfying $AB^T=J+kI$. Then $A^{T}B=J+kI$.
\end{corollary}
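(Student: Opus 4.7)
The plan is to obtain the corollary as an immediate consequence of Theorem~\ref{BridgesRyser} together with the symmetry of $J + kI$. By Theorem~\ref{BridgesRyser}, since $(A,B)$ is a Lehman pair with $AB^T = J + kI$, we have the commutation relation $B^T A = AB^T$, and therefore $B^T A = J + kI$.

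Now take the transpose of both sides of $B^T A = J + kI$. The left-hand side becomes $A^T B$, and since $J^T = J$ and $I^T = I$ the right-hand side is unchanged, giving $A^T B = J + kI$ as required. The main (and only) step is the appeal to Bridges--Ryser; there is no real obstacle, as the rest is a transpose and the elementary observation that $J + kI$ is symmetric.
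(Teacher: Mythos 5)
Your proof is correct and is essentially identical to the paper's: both invoke Theorem~\ref{BridgesRyser} to get $B^TA = AB^T = J+kI$ and then transpose, using the symmetry of $J+kI$. No issues.
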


\begin{proof}
Assume that $AB^T=J+kI$. Theorem \ref{BridgesRyser} says that $B^TA=J+kI$. Therefore $J+kI=(J+kI)^{T}=(B^TA)^{T}=A^{T}B$.
\end{proof}

Let $G$ be a connected bipartite graph whose vertices are partitioned into two independent sets $G_B = \{b_1, b_2, \ldots, \}$ and $G_W = \{w_1, w_2, \ldots, \}$, which we refer to as the \emph{black vertices} and the \emph{white vertices} of $G$, respectively.  Then the \emph{bipartite adjacency matrix} of $G$ is the matrix $M$ with rows indexed by $G_B$ and columns indexed by $G_W$ where $M_{bw}=1$ if and only if $b$ is adjacent to $w$. Conversely, any $(0,1)$-matrix corresponds to a bipartite graph in the obvious fashion. If the matrix is a Lehman matrix, then its associated bipartite graph is regular. A bipartite graph is called a \emph{Lehman graph} if its bipartite adjacency matrix is a Lehman matrix. We note that for negative Lehman matrices, this bipartite representation is different from the representation as an $(\alpha,\omega)$-graph. In particular, given a negative bipartite Lehman graph $G$, the corresponding $(\alpha, \omega)$-graph $\Gamma$ has the white vertices of $G$ as its vertices, and a clique on the neighbours of each black vertex. Thus a white vertex of $G$ corresponds to a vertex of $\Gamma$, a black vertex of $G$ corresponds to an $\omega$-clique of $\Gamma$ and an edge of $G$ corresponds to a ``pointed clique'' (a clique with a distinguished vertex) of $\Gamma$. 
If $v$ is a vertex in a loopless graph, let $N(v)$ stand for its \emph{open neighbourhood}: that is, the set of vertices adjacent to $v$. The following proposition just reinterprets the definition of a Lehman matrix in graph-theoretical terms.

\begin{proposition}
\label{prop2}
Let $G$ be a regular bipartite graph with bipartition $\{G_B, G_W\}$. Then $G$ is a Lehman graph if and only if there is some integer $k \in \{-1, 1, 2, \ldots,\}$ such that for every black vertex $b$, there is a set $\Gamma_G(b) \subseteq G_W$ of white vertices such that, for all $b' \in G_B$
\[
|\Gamma_G(b) \cap N(b')| =
\begin{cases}
k+1, & b' = b;\\
1, & b'\not= b.
\end{cases} 
\] 
\end{proposition}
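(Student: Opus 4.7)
The plan is to unpack the matrix equation $AB^T = J + kI$ entrywise and translate it directly into the language of bipartite graphs, using the bijection between $(0,1)$-matrices $B$ indexed by $G_B \times G_W$ and families of subsets $\{\mate{G}{b} : b \in G_B\}$ of $G_W$. Write $A$ for the bipartite adjacency matrix of $G$, so that $A_{bw} = 1$ exactly when $w \in N(b)$. The proof is essentially a restatement of the definition, with only a minor bookkeeping subtlety involving the order of indices.

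For the forward direction, I would assume $G$ is a Lehman graph with Lehman pair $(A, B)$ and constant $k$, view $B$ as indexed by $G_B \times G_W$, and define $\mate{G}{b} = \{w \in G_W : B_{bw} = 1\}$ for each $b \in G_B$. A routine calculation gives
$$
(AB^T)_{bb'} \;=\; \sum_{w \in G_W} A_{bw} B_{b'w} \;=\; |N(b) \cap \mate{G}{b'}|
$$
for all $b, b' \in G_B$. Since $(AB^T)_{bb'}$ equals $k+1$ when $b = b'$ and $1$ otherwise, the commutativity of intersection combined with a relabelling $b \leftrightarrow b'$ produces exactly the cardinality conditions stated in the proposition.

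Conversely, given sets $\mate{G}{b} \subseteq G_W$ satisfying those conditions, I would define $B$ as the $(0,1)$-matrix with $B_{bw} = 1$ iff $w \in \mate{G}{b}$; the same computation then yields $(AB^T)_{bb'} = (J + kI)_{bb'}$ for all $b, b'$, so $(A, B)$ is a Lehman pair and $G$ is a Lehman graph. There is no substantive obstacle anywhere in the proof, since the entire content of the proposition is a translation between the matrix and graph-theoretic viewpoints; what little care is required concerns only keeping track of the index swap between $(AB^T)_{bb'}$ and the statement's use of $\mate{G}{b} \cap N(b')$.
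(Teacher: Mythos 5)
Your proposal is correct and matches the paper's own proof, which likewise identifies the rows of $B$ with the incidence vectors of the sets $\Gamma_G(b)$ and observes that the entrywise Lehman condition $AB^T=J+kI$ is exactly the stated intersection condition (the index swap you note is harmless since $J+kI$ is symmetric). No further comment is needed.
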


\begin{proof}
If $G$ is a Lehman graph with bipartite adjacency matrix $A$, then it belongs to a Lehman pair $(A,B)$ and we can take the rows of $B$ to be the (incidence vectors of the) sets $\Gamma_G(b)$. The Lehman condition for the matrix is then identical to the intersection conditions for the sets. The converse is very similar -- if a collection of suitable sets $\{\Gamma_G(b) \}_{b\in G_B}$ exists, then the matrix $B$ with the incidence vectors of these sets as its rows will form a Lehman pair with $A$.
\end{proof}

We will call the set $\mate{G}{b}$ the \emph{mate} of $b$, and will drop the subscript $G$ if the graph is uniquely determined by context. For both positive and negative Lehman graphs, each set $\mate{}{b}$ is a set of white vertices that \emph{dominates} every black vertex other than $b$ exactly once, while dominating $b$ exactly $k+1$ times. Figure~\ref{fig:desargues} shows the mate of a vertex in the \emph{Desargues graph}, which is a $(10,3,4)$-Lehman graph. As $k = 3 \times 4 - 10 = 2$, the Desargues graph is one of the rare Lehman graphs with $k \neq \pm 1$. The set of four circled white vertices dominates the marked black vertex $b$ three times, and all other black vertices exactly once each. 

If $G$ is a Lehman graph with bipartite adjacency matrix $A$, then the incidence vector $x$ of the mate of the vertex $b_i$ is the unique solution to $A x = \mathbf{1} + ke_i$ (where $e_i$ is the standard basis vector with a single $1$ in the $i$th 
position). Therefore if any black vertex of a regular bipartite graph $G$ has no mates or more than one mate, then $G$ is not a Lehman graph. If every black vertex of $G$ has \emph{at least} one mate, then every black vertex must have \emph{exactly} one mate, and the graph is a Lehman graph. By Corollary \ref{cor1}, we can swap the words ``black'' and ``white'' wherever they occur, and so every white vertex of a Lehman graph also has a unique mate. Our arguments in subsequent sections will largely be based around showing that a vertex in a candidate Lehman graph has too few, too many, or exactly the right number of mates.

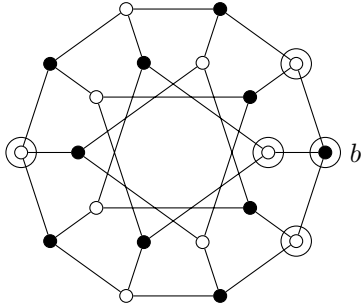
\begin{figure}[htb]
\begin{center}
\begin{tikzpicture}
\tikzstyle{rowv}=[circle, fill=black, draw=black, inner sep = 0.6mm]
\tikzstyle{colv}=[circle, fill=white, draw=black, inner sep = 0.6mm]
\foreach \x in {0,2,4,6,8} {
\node [rowv] (c\x) at (36*\x:2cm) {};
\node [colv] (r\x) at (36*\x:1.25cm) {};
\draw (r\x)--(c\x);
}

\foreach \x in {1,3,5,7,9} {
\node [rowv] (c\x) at (36*\x:1.25cm) {};
\node [colv] (r\x) at (36*\x:2cm) {};
\draw (r\x)--(c\x);
}

\draw (c0)--(r1)--(c2)--(r3)--(c4)--(r5)--(c6)--(r7)--(c8)--(r9)--(c0);
\draw (r0)--(c3)--(r6)--(c9)--(r2)--(c5)--(r8)--(c1)--(r4)--(c7)--(r0);

\draw (c0) circle (2mm);
\draw (r1) circle (2mm);
\draw (r0) circle (2mm);
\draw (r9) circle (2mm);
\draw (r5) circle (2mm);

\node at (0:2.4cm) {$b$};

\end{tikzpicture}
\end{center}
\caption{A mate in the unique $(10,3,4)$-Lehman graph}
\label{fig:desargues}
\end{figure}

\begin{proposition}
\label{prop4}
Let $G$ be a Lehman graph and suppose that $b$ is a black vertex, and $w$ is a white vertex. Then $w$ is in the mate of $b$ if and only if $b$ is in the mate of $w$.
\end{proposition}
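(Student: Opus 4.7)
My plan is to reduce both statements $w\in\mate{G}{b}$ and $b\in\mate{G}{w}$ to entries of the \emph{same} $(0,1)$-matrix, namely the partner matrix $B$ in the Lehman pair $(A,B)$ that contains the bipartite adjacency matrix $A$ of $G$. Once that identification is made, the equivalence will be immediate.

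First I would recall from the proof of Proposition~\ref{prop2} that, indexing rows by black vertices and columns by white vertices, the row of $B$ indexed by $b$ is precisely the incidence vector of $\mate{G}{b}$. Hence $w\in\mate{G}{b}$ if and only if $B_{b,w}=1$. For the white side I would apply the same proposition to the graph obtained from $G$ by swapping the colour classes; its bipartite adjacency matrix is $A^{T}$, and so the mate of a white vertex $w$ is obtained from a matrix $C$ satisfying $A^{T}C^{T}=J+kI$, with row $w$ of $C$ being the incidence vector of $\mate{G}{w}\subseteq G_B$.

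The crucial step is to identify $C$ with $B^{T}$, and this is exactly the content of Corollary~\ref{cor1}: from $A^{T}B=J+kI$ we read off $A^{T}(B^{T})^{T}=J+kI$, so $B^{T}$ meets the defining equation for $C$. Because the mate of any vertex in a Lehman graph is unique (the incidence vector of $\mate{G}{w}$ is the unique solution of $A^{T}x=\mathbf{1}+ke_{w}$, as noted after Proposition~\ref{prop2}), this forces $C=B^{T}$. Consequently $b\in\mate{G}{w}$ if and only if $(B^{T})_{w,b}=1$, i.e.\ if and only if $B_{b,w}=1$.

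Combining the two characterisations yields
\[
w\in\mate{G}{b}\iff B_{b,w}=1\iff b\in\mate{G}{w},
\]
which is the statement. The only subtle point is that the notation $\mate{G}{\cdot}$ has to be interpreted on both sides of the bipartition via Corollary~\ref{cor1}; this is really the main (and essentially only) obstacle, but the corollary is already in hand, so the argument is a short symmetric bookkeeping once the correct matrix identification $C=B^{T}$ is made.
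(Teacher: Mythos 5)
Your proposal is correct and follows essentially the same route as the paper's own proof: both reduce membership in the mates of black and white vertices to the entries of the single matrix $B$, using Corollary~\ref{cor1} (i.e.\ $A^{T}B=J+kI$) to identify the columns of $B$ (equivalently, rows of $B^{T}$) with the incidence vectors of the mates of the white vertices. Your additional remark that uniqueness of mates forces $C=B^{T}$ is a slightly more explicit justification of a step the paper leaves implicit, but the argument is the same.
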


\begin{proof}
Let $A$ be the Lehman matrix associated with $G$, and let $B$ be the matrix satisfying $AB^T=J+kI$. The rows of $B$ are the incidence vectors of the mates of the black vertices, and so $w$ is in the mate of $b$ if and only if $B_{bw} = 1$. Now $A^{T}B=J+kI$, by Corollary \ref{cor1}, and so the rows of $B^T$ are the incidence vectors of the mates of the white vertices. Thus $b$ is in the mate of $w$ if and only if $(B^T)_{wb} = 1$, which happens if and only if $B_{bw}=1$.
\end{proof}

The \emph{Hadamard product} of two square matrices $X$ and $Y$, written $X\circ Y$, is component-wise product of $X$ and $Y$; that is, $[X\circ Y]_{i,j}=[X]_{i,j}[Y]_{i,j}$ for all $i$ and $j$. Let $(A,B)$ be a Lehman pair satisfying $AB^T = J + k I$. Then $A\circ B$ is a $(k+1)$-regular matrix. Thus the bipartite graph corresponding to $A \circ B$ is $(k+1)$-regular. We call this the \emph{auxiliary graph} of $A$ (or $B$) and denote it $\operatorname{aux}(A)$. If $G$ is a cubic Lehman matrix with $k=1$, then its auxiliary graph is $2$-regular, and so the edges \emph{not} in the auxiliary graph form a perfect matching of $G$. We will call the edges of this distinguished perfect matching the \emph{rungs} of $G$. This terminology arises from the observation that if the graph actually is a ladder, either a cubic planar ladder or a cubic M\"obius ladder, then the rungs of the Lehman graph are actually the rungs of the ladder in the normal graph-theoretical sense. 

Figure~\ref{fig:aux} shows a $(14,3,5)$-Lehman graph, with the diagram on the left highlighting the auxiliary graph and the diagram on the right highlighting the rungs. 

\begin{figure}[htb]
\begin{center}
\begin{tikzpicture}[scale=1.25]
\tikzstyle{rowv}=[circle, fill=black, draw=black, inner sep = 0.6mm]
\tikzstyle{colv}=[circle, fill=white, draw=black, inner sep = 0.6mm]
\tikzstyle{ghost}=[dotted]

\foreach \x in {0,2,4,6,8} {
\node [rowv] (r\x) at (36*\x:2cm) {};
\node [colv] (c\x) at (36*\x:1cm) {};
}
\foreach \x in {1,3,5,7,9} {
\node [rowv] (r\x) at (36*\x:1cm) {};
\node [colv] (c\x) at (36*\x:2cm) {};
}

\node [colv] (w1) at (36:1.35cm) {};
\node [rowv] (b1) at (36:1.65cm) {};
\node [colv] (w2) at (2*36:1.65cm) {};
\node [rowv] (b2) at (2*36:1.35cm) {};

\node [colv] (w3) at (3*36:1.35cm) {};
\node [rowv] (b3) at (3*36:1.65cm) {};
\node [colv] (w4) at (4*36:1.65cm) {};
\node [rowv] (b4) at (4*36:1.35cm) {};

\draw (r0)--(c1)--(b1)--(w1)--(r1)--(c0)--(r9)--(c8)--(r7)--(c6)--(r5)--(c4)--(b4)--(w4)--(r4)--(c5)--(r6)--(c7)--(r8)--(c9)--(r0);
\draw (c2)--(b2)--(w2)--(r2)--(c3)--(b3)--(w3)--(r3)--(c2);

\foreach \x in {0,5,6,7,8,9} {
\draw [ghost] (r\x)--(c\x);
}

\draw [ghost](w1)--(b2);
\draw [ghost] (w2)--(b1);
\draw [ghost] (w3)--(b4);
\draw [ghost] (w4)--(b3);
\draw [ghost] (r1)--(c2);
\draw [ghost] (c1)--(r2);
\draw [ghost] (r3)--(c4);
\draw [ghost] (c3)--(r4);

\end{tikzpicture}
\hspace{1cm}
\begin{tikzpicture}[scale=1.25]
\tikzstyle{rowv}=[circle, fill=black, draw=black, inner sep = 0.6mm]
\tikzstyle{colv}=[circle, fill=white, draw=black, inner sep = 0.6mm]
\tikzstyle{ghost}=[dotted]
\foreach \x in {0,2,4,6,8} {
\node [rowv] (r\x) at (36*\x:2cm) {};
\node [colv] (c\x) at (36*\x:1cm) {};
}
\foreach \x in {1,3,5,7,9} {
\node [rowv] (r\x) at (36*\x:1cm) {};
\node [colv] (c\x) at (36*\x:2cm) {};
}

\node [colv] (w1) at (36:1.35cm) {};
\node [rowv] (b1) at (36:1.65cm) {};
\node [colv] (w2) at (2*36:1.65cm) {};
\node [rowv] (b2) at (2*36:1.35cm) {};

\node [colv] (w3) at (3*36:1.35cm) {};
\node [rowv] (b3) at (3*36:1.65cm) {};
\node [colv] (w4) at (4*36:1.65cm) {};
\node [rowv] (b4) at (4*36:1.35cm) {};

\foreach \x in {0,5,6,7,8,9} {
\draw (r\x)--(c\x);
}

\draw (w1)--(b2);
\draw (w2)--(b1);
\draw (w3)--(b4);
\draw (w4)--(b3);
\draw (r1)--(c2);
\draw (c1)--(r2);
\draw (r3)--(c4);
\draw (c3)--(r4);

\draw [ghost] (r0)--(c1)--(b1)--(w1)--(r1)--(c0)--(r9)--(c8)--(r7)--(c6)--(r5)--(c4)--(b4)--(w4)--(r4)--(c5)--(r6)--(c7)--(r8)--(c9)--(r0);
\draw [ghost] (c2)--(b2)--(w2)--(r2)--(c3)--(b3)--(w3)--(r3)--(c2);

\end{tikzpicture}
\end{center}
\caption{Auxiliary graph and rungs of a $(14,3,5)$-Lehman graph}
\label{fig:aux}
\end{figure}
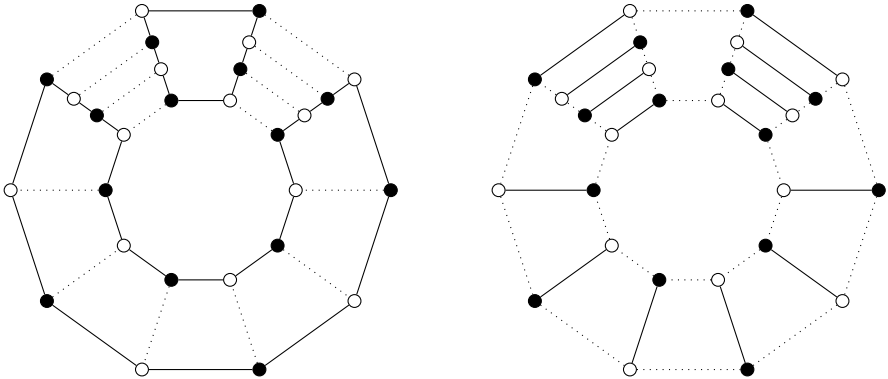

\section{Ladder reduction and insertion}
\label{ladders}

In this section, we describe the first of the two ways in which certain Lehman graphs can be reduced to smaller Lehman graphs, and when this operation can be reversed. This operation applies only to cubic Lehman graphs with $k \in \{-1,1\}$.

A \emph{$3$-rung ladder segment} is a $6$-vertex induced subgraph isomorphic to the graph obtained from the cube $Q_3$ by deleting two adjacent vertices, along with their incident edges. Except for two small base cases, we show that it is always possible to delete a $3$-rung ladder segment from a cubic Lehman graph and then add two edges to repair the regularity in such a way that the resulting graph is also a Lehman graph. This reduction produces Lehman graphs with six fewer vertices than the original graph, but with the same ``sign'' (i.e.\ positive or negative). 

Boros, Gurvich and Hougardy \cite{MR1936943} found a reduction operation that produces an $(\alpha-1,\omega)$-graph by removing the vertices of a \emph{critical clique} from an $(\alpha,\omega)$-graph and adding new edges to restore partitionability. Although the language used is quite different, it is straightforward to see that their clique-reduction and our ladder-reduction coincide when $\omega=3$ and $k=-1$.

\subsection{Ladder Reduction For Cubic Lehman Graphs}
\label{reduction}

\newcommand\compress[2]{#1\mathord{\downarrow}#2}
\newcommand\gdl{\compress{G}{L}}
 
We define a reduction operation, which we denote \emph{$3$-rung ladder reduction}, that preserves the property of being a cubic Lehman graph. More precisely, suppose that $L$ is a $3$-rung ladder segment in a cubic graph $G$, that there are four distinct vertices $\{w_L, b_L, w_R, b_R\}$ that are adjacent to, but outside, $L$ (as shown in Figure~\ref{fig:3rung}), and that the pairs $(b_L, w_R)$ and $(w_L, b_R)$ are not edges of $G$. The dotted lines in the figure represent edges that may or may not be present. Then the \emph{$3$-rung ladder reduction} of $G$ with respect to $L$ is the graph $\gdl$ obtained from $G$ by deleting the six vertices of $L$ and then restoring $3$-regularity by adding the edges $(b_L, w_R)$ and $(b_L, w_R)$. 

\begin{figure}[htb]
\begin{center}
\begin{tikzpicture}
\tikzstyle{rowvertex}=[circle,fill=black,draw=black ,inner sep = 0.8mm]
\tikzstyle{colvertex}=[circle,fill=white,draw=black ,inner sep = 0.8mm]
\tikzstyle{grayvertex}=[circle,fill=lightgray,draw=black ,inner sep = 0.8mm]
\node [colvertex] (c0) at (0.2,0.2) {};
\node [colvertex] (c1) at (1,2) {};
\node [colvertex] (c2) at (2,2) {};
\node [colvertex] (c3) at (3,1) {};
\node [colvertex] (c4) at (3.8,2.8) {};
\node [rowvertex] (r0) at (0.2,2.8) {};
\node [rowvertex] (r1) at (1,1) {};
\node [rowvertex] (r2) at (2,1) {};
\node [rowvertex] (r3) at (3,2) {};
\node [rowvertex] (r4) at (3.8,0.2) {};
\draw (c1)--(r1);
\draw (c2)--(r2);
\draw (c3)--(r3);
\draw (c0)--(r1)--(c2)--(r3)--(c4);
\draw (r0)--(c1)--(r2)--(c3)--(r4);
\node [above] at (c1.north) {$w_0$};
\node [above] at (c2.north) {$w_1$};
\node [below] at (c3.south) {$w_2$};
\node [below] at (c0.south) {$w_L$};
\node [above] at (c4.north) {$w_R$};

\node [below] at (r1.south) {$b_0$};
\node [below] at (r2.south) {$b_1$};
\node [above] at (r0.north) {$b_L$};
\node [above] at (r3.north) {$b_2$};
\node [below] at (r4.south) {$b_R$};
\draw [dashed] (r0)--(c0);
\draw [dashed] (r4)--(c4);

\draw [->] (4.5,1.5)--(5.5,1.5);

\pgftransformxshift{6cm}

\node [colvertex] (c0) at (0.2,0.2) {};
\node [colvertex] (c4) at (3.8,2.8) {};
\node [rowvertex] (r0) at (0.2,2.8) {};
\node [rowvertex] (r4) at (3.8,0.2) {};
\node [below] at (c0.south) {$w_L$};
\node [above] at (c4.north) {$w_R$};
\node [above] at (r0.north) {$b_L$};
\node [below] at (r4.south) {$b_R$};
\draw (c0)--(r4);
\draw (c4)--(r0);
\draw [dashed] (r0)--(c0);
\draw [dashed] (r4)--(c4);

\end{tikzpicture}
\end{center}
\caption{A $3$-rung ladder reduction}
\label{fig:3rung}
\end{figure}
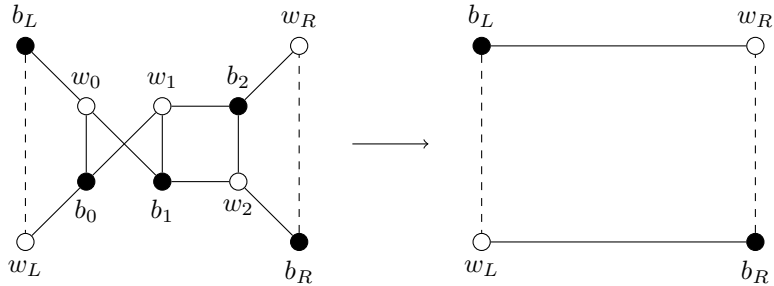

The constraints on $L$ and its vertices of attachment are simply the conditions required to ensure that $\gdl$ is actually cubic. These conditions are necessary because there are two small Lehman graphs that each contain a $3$-rung ladder, but which cannot be reduced with this operation. These graphs, illustrated in Figure~\ref{fig:smallgraphs} are the cubic planar ladder on $8$ vertices (i.e. the cube) and the cubic M\"obius ladder on $10$ vertices. The cube cannot be reduced because the vertices $\{b_L, w_L, b_R, w_R\}$ are not distinct while the $10$-vertex ladder cannot be reduced because $b_L$ is already adjacent to $w_R$ (similarly for $w_L$ and $b_R$). 

\begin{figure}[htb]
\begin{center}
\begin{tikzpicture}[scale=0.85]
\tikzstyle{rowvertex}=[circle,fill=black,draw=black ,inner sep = 0.8mm]
\tikzstyle{colvertex}=[circle,fill=white,draw=black ,inner sep = 0.8mm]

\node [rowvertex] (b0) at (45:1cm) {};
\node [rowvertex] (b1) at (135:2cm) {};
\node [rowvertex] (b2) at (225:1cm) {};
\node [rowvertex] (b3) at (315:2cm) {};

\node [colvertex] (w0) at (45:2cm) {};
\node [colvertex] (w1) at (135:1cm) {};
\node [colvertex] (w2) at (225:2cm) {};
\node [colvertex] (w3) at (315:1cm) {};

\foreach \x in {0,1,2,3}{
\draw (b\x)--(w\x);
}

\draw (b0)--(w1)--(b2)--(w3)--(b0);
\draw (b1)--(w2)--(b3)--(w0)--(b1);

\pgftransformxshift{6cm}

\node [rowvertex] (b0) at (306:1cm) {};
\node [rowvertex](b1) at (18:2cm) {};
\node [rowvertex](b2) at (90:1cm) {};
\node [rowvertex](b3) at (162:2cm) {};
\node [rowvertex](b4) at (234:1cm) {};

\node [colvertex] (w0) at (306:2cm) {};
\node [colvertex](w1) at (18:1cm) {};
\node [colvertex](w2) at (90:2cm) {};
\node [colvertex](w3) at (162:1cm) {};
\node [colvertex](w4) at (234:2cm) {};

\foreach \x in {0,1,2,3,4}{
 \draw (b\x)--(w\x);
}

\draw (w3)--(b4)--(w0)--(b1)--(w2)--(b3)--(w4)--(b0)--(w1)--(b2)--(w3);

\end{tikzpicture}
\caption{Lehman graphs of type $(4,3,1)$ and $(5,3,2)$}
\label{fig:smallgraphs}
\end{center}
\end{figure}
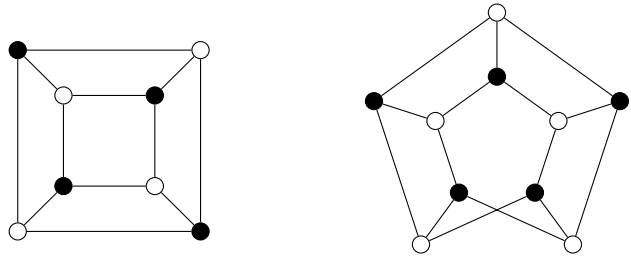

Our first lemma shows that for all larger cubic Lehman graphs, any $3$-rung ladder segment automatically meets these additional constraints, and so is suitable for ladder reduction.

\begin{lemma}
\label{forbiddenconfigs}
Suppose that $G$ is a Lehman graph of type $(n,3,s)$ where $k = 3 s - n \in \{1,-1\}$, and that $G$ contains a $3$-rung ladder $L$. If $k=-1$ and $s > 1$, or if $k=1$ and $s > 2$, then the vertices $\{b_L, w_L, b_R, w_R\}$ are distinct, $w_L$ is not adjacent to $b_R$, and $w_R$ is not adjacent to $b_L$. 
\end{lemma}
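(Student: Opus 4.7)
The plan is to prove the contrapositive. Suppose that $G$ is a cubic Lehman graph of type $(n,3,s)$ with $k=3s-n\in\{-1,1\}$ containing a $3$-rung ladder $L$ for which at least one of the conclusions fails: the four attachment vertices $\{b_L,w_L,b_R,w_R\}$ are not all distinct, or $w_L$ is adjacent to $b_R$, or $w_R$ is adjacent to $b_L$. I will deduce that $G$ must be the cube (forcing $s=1$) when $k=-1$, or the $10$-vertex M\"obius ladder (forcing $s=2$) when $k=1$, contradicting the hypothesized bound on $s$. The essential tool is that the two ``interior'' vertices of $L$, namely $b_1$ and $w_1$, have all three of their neighbours inside $L$, which by Proposition~\ref{prop2} severely constrains their mates. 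For $k=1$, the only pair from $\{w_0,w_1,w_2\}$ that can lie in the mate of $b_1$ without double-covering $b_0$ or $b_2$ is $\{w_0,w_2\}$; hence $\{w_0,w_2\}$ is contained in the mate of $b_1$, and symmetrically $\{b_0,b_2\}$ is contained in the mate of $w_1$. For $k=-1$, the mate of $b_1$ contains no neighbour of $b_1$, so covering $b_0$ forces $w_L$ into the mate (its only remaining neighbour) and covering $b_2$ forces $w_R$; symmetrically, $b_L$ and $b_R$ lie in the mate of $w_1$.

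The distinctness conclusions in the $k=1$ case then follow immediately. If $b_L=b_R$, then this common vertex is adjacent to both $w_0$ and $w_2$, and is therefore covered twice by the mate of $b_1$; since $b_L\neq b_1$, this violates Proposition~\ref{prop2}. The case $w_L=w_R$ is handled symmetrically via the mate of $w_1$. For $k=-1$, the forced mate elements $w_L,w_R$ lie \emph{outside} $L$, so an immediate double-cover is not available. Here I would propagate using Proposition~\ref{prop4}: from $w_L$ in the mate of $b_1$ one obtains $b_1$ in the mate of $w_L$, and by analysing in turn the mates of $w_L$, $b_L$, $w_R$, and $b_R$ with the same technique that produced the interior constraints, the external structure of $G$ becomes pinned down far enough to force $n=4$, i.e., $G$ is the cube.

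The forbidden-edge cases follow the same template; I describe the $k=1$ case with $w_L$ adjacent to $b_R$. A short case analysis on the possible pairs of neighbours of $b_R$ lying in its mate, combined with Proposition~\ref{prop4} applied to the previously-derived mate memberships, rules out all options except the pair $\{w_2,w_L\}$. A symmetric analysis of the mate of $w_L$ yields $\{b_0,b_R\}$ in its mate, and continuing with the mates of $b_2$ and $w_2$ further constrains the external structure. Pursuing these constraints ultimately forces every vertex of $G$ outside $L$ to lie in $\{b_L,w_L,b_R,w_R\}$, with all four cross-edges between the two black attachment vertices and the two white attachment vertices necessarily present; this is precisely the $10$-vertex M\"obius ladder structure, with $s=2$. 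The $k=-1$ analogue proceeds similarly and produces the cube. The main obstacle will be the bookkeeping in this edge-adjacency case: each determined mate triggers further propagation via Proposition~\ref{prop4} to several other vertices, and the crux of the argument is verifying that no vertex of $G$ can sit outside $L\cup\{b_L,w_L,b_R,w_R\}$, thereby pinning down $n$ and hence forcing $s$ to equal the value of the exceptional graph.
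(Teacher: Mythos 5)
Your opening moves are sound and match the paper's: the interior vertices $b_1$ and $w_1$ have all their neighbours inside $L$, and for $k=1$ the only admissible pair of neighbours of $b_1$ in its mate is $\{w_0,w_2\}$, which immediately kills $b_L=b_R$ (and symmetrically $w_L=w_R$). That case is complete and is exactly the paper's argument. The problem is everything after that. For the $k=-1$ distinctness case and for both forbidden-edge cases, your proposal reduces to the assertions that ``the external structure of $G$ becomes pinned down far enough to force $n=4$'' and that ``pursuing these constraints ultimately forces every vertex of $G$ outside $L$ to lie in $\{b_L,w_L,b_R,w_R\}$.'' Those assertions \emph{are} the proof in your scheme, and they are not carried out; reconstructing the entire graph from local mate constraints is a genuinely laborious propagation with many branches, and nothing you have written rules out, say, a large graph in which the forced mate memberships of $w_L,b_L,w_R,b_R$ are all consistently satisfied. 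As written, three of the four cases rest on an unverified global claim.

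The missing idea that makes these cases short is a counting argument rather than a reconstruction: since $B$ is $s$-regular, every vertex lies in exactly $s$ mates of vertices of the opposite colour. One then shows that a specific vertex can lie in very few mates. For example, when $k=1$ and $b_L$ is adjacent to $w_R$, any black vertex $b\ne b_1$ with $w_0\in\Gamma(b)$ must have $w_1,w_2\notin\Gamma(b)$ and hence $w_R\in\Gamma(b)$ to dominate $b_2$; but then $b_L$ is dominated twice, forcing $b=b_L$. So $w_0$ lies in at most two mates and $s\le 2$. Similarly, when $k=-1$ and $b_L=b_R$, every white mate other than $\Gamma(w_1)$ contains exactly one of $b_0,b_1,b_2$ and therefore cannot contain $b_L$, so $b_L$ lies in at most one mate and $s\le 1$; and when $k=-1$ with $b_L$ adjacent to $w_R$, any mate containing $w_0$ must avoid the whole neighbourhood of $b_2$, forcing it to be $\Gamma(b_2)$, so again $s\le 1$. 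These bounds contradict the hypotheses on $s$ directly, with no need to identify the graph. I recommend replacing your propagation sketches with arguments of this form, or else actually executing the reconstruction in full detail.
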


\begin{proof}
First consider the case where $k=1$ and suppose, for a contradiction, that $G$ contains a \ladseg{3} $L$ where $b_L = b_R$, as shown in Figure~\ref{fig:bl_and_br_distinct}. (Here $w_L$ and $w_R$ may be the same or distinct.) The mate $\Gamma(b_1)$ contains two vertices from $\{w_0, w_1, w_2\}$. However any pair of vertices from that set dominates two vertices twice, and so cannot be contained in the mate of any black vertex. Therefore the four vertices $\{b_L, w_L, b_R, w_R\}$ are indeed distinct. 

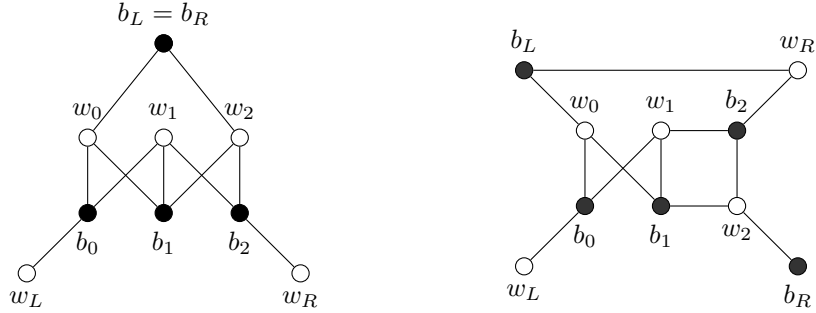
\begin{figure}[htb]
\begin{center}
\begin{tikzpicture}
\tikzstyle{rowvertex}=[circle,fill=black,draw=black ,inner sep = 0.8mm]
\tikzstyle{colvertex}=[circle,fill=white,draw=black ,inner sep = 0.8mm]
\tikzstyle{grayvertex}=[circle,fill=lightgray,draw=black ,inner sep = 0.8mm]
\node [colvertex] (wl) at (0.2,0.2) {};
\node [colvertex] (w0) at (1,2) {};
\node [colvertex] (w1) at (2,2) {};
\node [colvertex] (w2) at (3,2) {};
\node [colvertex] (wr) at (3.8,0.2) {};
\node [rowvertex] (bl) at (2,3.25) {};
\node [rowvertex] (b0) at (1,1) {};
\node [rowvertex] (b1) at (2,1) {};
\node [rowvertex] (b2) at (3,1) {};
\node [rowvertex] (br) at (2,3.25) {};
\draw (w0)--(b0);
\draw (w1)--(b1);
\draw (w2)--(b2);
\draw (wl)--(b0)--(w1)--(b2)--(wr);
\draw (bl)--(w0)--(b1)--(w2)--(br);
\node [above] at (w0.north) {$w_0$};
\node [above] at (w1.north) {$w_1$};
\node [above] at (w2.north) {$w_2$};
\node [below] at (wl.south) {$w_L$};
\node [below] at (wr.south) {$w_R$};
\node [below] at (b0.south) {$b_0$};
\node [below] at (b1.south) {$b_1$};
\node [above] at (bl.north) {$b_L = b_R$};
\node [below] at (b2.south) {$b_2$};
\end{tikzpicture}
\hspace{2cm}
\begin{tikzpicture}
\tikzstyle{rowvertex}=[circle,fill=black!80!white,draw=black ,inner sep = 0.8mm]
\tikzstyle{colvertex}=[circle,fill=white,draw=black ,inner sep = 0.8mm]
\tikzstyle{grayvertex}=[circle,fill=lightgray,draw=black ,inner sep = 0.8mm]
\node [colvertex] (wl) at (0.2,0.2) {};
\node [colvertex] (w0) at (1,2) {};
\node [colvertex] (w1) at (2,2) {};
\node [colvertex] (w2) at (3,1) {};
\node [colvertex] (wr) at (3.8,2.8) {};
\node [rowvertex] (bl) at (0.2,2.8) {};
\node [rowvertex] (b0) at (1,1) {};
\node [rowvertex] (b1) at (2,1) {};
\node [rowvertex] (b2) at (3,2) {};
\node [rowvertex] (br) at (3.8,0.2) {};
\draw (w0)--(b0);
\draw (w1)--(b1);
\draw (w2)--(b2);
\draw (wl)--(b0)--(w1)--(b2)--(wr);
\draw (bl)--(w0)--(b1)--(w2)--(br);
\node [above] at (w0.north) {$w_0$};
\node [above] at (w1.north) {$w_1$};
\node [below] at (w2.south) {$w_2$};
\node [below] at (wl.south) {$w_L$};
\node [above] at (wr.north) {$w_R$};

\node [below] at (b0.south) {$b_0$};
\node [below] at (b1.south) {$b_1$};
\node [above] at (bl.north) {$b_L$};
\node [above] at (b2.north) {$b_2$};
\node [below] at (br.south) {$b_R$};
\draw (bl)--(wr);
\end{tikzpicture}
\end{center}
\caption{Configurations in a cubic Lehman graph}
\label{fig:bl_and_br_distinct}
\end{figure}

Next we will show that $b_L$ is not adjacent to $w_R$. Again we proceed by contradiction starting from the second diagram of Figure~\ref{fig:bl_and_br_distinct}. Suppose that $b \not= b_1$ and that $\Gamma(b)$ contains $w_0$. Then $w_1, w_2 \notin \Gamma(b)$ and so to dominate $b_2$ it is forced that $w_R \in \Gamma(b)$ and as $b_L$ is now twice-dominated, it follows that $b = b_L$. Thus $w_0$ is in at most two mates, hence it follows that $s \leq 2$, contradicting our assumptions. (We note that this configuration can occur when $s=2$, as it is a subgraph of the cubic M\"obius ladder on $10$ vertices, which is a Lehman graph of type $(5,3,2)$.)

Next consider the case where $k=-1$ and suppose, again for a contradiction that $G$ contains the configuration of Figure~\ref{fig:bl_and_br_distinct}. Now we will consider the mates of the \emph{white} vertices, which are sets of black vertices dominating each white vertex bar one exactly once each. The exceptional white vertex is not dominated at all. If a mate contains one vertex from $\{b_0, b_1, b_2\}$ then it cannot contain $b_L$ for then we would have a vertex dominated twice. As $w_1$ is the only vertex whose mate contains no vertices from $\{b_0,b_1,b_2\}$ this means $b_L$ is only in one mate. Therefore $s=1$, $w_L = w_R$ and the entire graph is the cube. 

Next we will show that $b_L$ is not adjacent to $w_R$ and again proceed by contradiction starting from the second diagram of 
Figure~\ref{fig:bl_and_br_distinct}. Let $b$ be an arbitrary black vertex and suppose that $w_0 \in \Gamma(b)$. Then $w_1,w_2,w_R \notin \Gamma(b)$ because that would cause either $b_0$, $b_1$ or $b_L$ to be twice dominated. As $\Gamma(b)$ misses the entire neighbourhood of $b_2$, it follows that $b=b_2$ and $s=1$, contradicting our assumptions.
\end{proof}

The main purpose of Lemma~\ref{forbiddenconfigs} is to ensure that for any Lehman graph other than the cube and the cubic M\"obius ladder on $10$ vertices, a $3$-rung ladder reduction will at least give a cubic graph. Next we show that reducing $3$-rung ladders also preserves the Lehman property. 

\begin{proposition} 
Suppose that $G$ is a Lehman graph of type $(n,3,s)$ where $k = 3 s - n \in \{1,-1\}$. Furthermore, assume that $s > 2$ if $k=1$ and $s>1$ if $k=-1$. If $G$ contains a $3$-rung ladder $L$, then $\compress{G}{L}$ is a Lehman graph of type $(n-3,3,s-1)$.
\end{proposition}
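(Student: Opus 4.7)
The plan is to use Proposition~\ref{prop2}: it suffices to exhibit, for each black vertex $b$ of $\gdl$, a set $\mate{\gdl}{b}$ of $s-1$ white vertices of $\gdl$ whose intersection with $N_{\gdl}(b')$ has size $k+1$ when $b'=b$ and size $1$ otherwise. Lemma~\ref{forbiddenconfigs} already guarantees that $\gdl$ is cubic with $n-3$ black and $n-3$ white vertices, and the arithmetic $3(s-1)-(n-3)=3s-n=k$ is consistent with the target type $(n-3,3,s-1)$.

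The construction of mates is the obvious one: for each black vertex $b$ of $\gdl$ (equivalently, each black vertex of $G$ outside $V(L)$), set
\[
\mate{\gdl}{b} := \mate{G}{b} \setminus \{w_0,w_1,w_2\}.
\]
The heart of the argument is a short case analysis of $\mate{G}{b} \cap \{w_L,w_0,w_1,w_2,w_R\}$. Since $b \neq b_0,b_1,b_2$, the set $\mate{G}{b}$ must meet each of $N_G(b_0)$, $N_G(b_1)$, $N_G(b_2)$ in exactly one vertex. Because $w_0,w_1,w_2$ all lie in $N_G(b_1)$, while $N_G(b_0)=\{w_L,w_0,w_1\}$ and $N_G(b_2)=\{w_1,w_2,w_R\}$, one finds precisely three possibilities:
\begin{align*}
\text{(A)}\quad & w_0, w_R \in \mate{G}{b} \text{ and } w_1, w_2, w_L \notin \mate{G}{b}; \\
\text{(B)}\quad & w_1 \in \mate{G}{b} \text{ and } w_0, w_2, w_L, w_R \notin \mate{G}{b}; \\
\text{(C)}\quad & w_2, w_L \in \mate{G}{b} \text{ and } w_0, w_1, w_R \notin \mate{G}{b}.
\end{align*}
In each type, exactly one vertex is removed when passing from $\mate{G}{b}$ to $\mate{\gdl}{b}$, so $|\mate{\gdl}{b}| = s-1$.

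To verify the intersection conditions, first observe that for any black vertex $b' \notin \{b_L,b_R\}$ we have $N_{\gdl}(b') = N_G(b')$ and $\{w_0,w_1,w_2\} \cap N_G(b') = \emptyset$, so $|\mate{\gdl}{b} \cap N_{\gdl}(b')| = |\mate{G}{b} \cap N_G(b')|$, giving $k+1$ or $1$ exactly as required. For $b' = b_L$ the altered neighbourhood is $N_{\gdl}(b_L) = (N_G(b_L) \setminus \{w_0\}) \cup \{w_R\}$, so
\[
|\mate{\gdl}{b} \cap N_{\gdl}(b_L)| = |\mate{G}{b} \cap N_G(b_L)| + [w_R \in \mate{G}{b}] - [w_0 \in \mate{G}{b}],
\]
and in each of the three types $[w_0 \in \mate{G}{b}] = [w_R \in \mate{G}{b}]$, so the correction vanishes. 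The case $b' = b_R$ is entirely symmetric. Proposition~\ref{prop2} applied to $\gdl$ then certifies it as a Lehman graph with the same $k$, and Theorem~\ref{BridgesRyser} (together with the row sums $s-1$ of the matrix of constructed mates) pins down the type as $(n-3,3,s-1)$.

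The main obstacle is the left-right coupling in the case analysis: one must check that whenever $\mate{G}{b}$ contains $w_0$ it also contains $w_R$, and whenever it contains $w_2$ it also contains $w_L$. This coupling is precisely what ensures that the two replacement edges $(b_L,w_R)$ and $(w_L,b_R)$ restore the mate condition at $b_L$ and $b_R$, whose neighbourhoods are modified by the reduction; without it, the construction would fail at exactly these two vertices.
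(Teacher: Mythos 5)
Your proposal is correct and follows essentially the same route as the paper's proof: the same candidate mate $\mate{G}{b}\setminus\{w_0,w_1,w_2\}$, the same key observation that $w_0$ and $w_R$ (respectively $w_2$ and $w_L$) occur together in $\mate{G}{b}$ so that the replacement edges restore domination at $b_L$ and $b_R$, and the same conclusion via counting mates. The only cosmetic difference is that you enumerate the three intersection patterns up front and package the verification as a correction-term identity, whereas the paper proves the two "if and only if" couplings first and then argues that exactly one of $w_0,w_1,w_2$ lies in each mate.
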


\begin{proof}
From Lemma~\ref{forbiddenconfigs}, any $3$-rung ladder $L$ in $G$ has the form depicted in Figure~\ref{fig:3rung} where $\{w_L,w_R,b_L,b_R\}$ are distinct vertices and $(w_L, b_L)$ and $(w_R,b_R)$ are the only possible edges between vertices in this set. Therefore $\compress{G}{L}$ is at least a cubic graph. 

To show that it is a Lehman graph, we show that each black vertex $b \in V(\compress{G}{L})$ has a mate, and in fact we claim that 
\[
\mate{\compress{G}{L}}{b} = \mate{G}{b} \backslash \{w_0, w_1, w_2\}.
\]
In other words, take the mates of all the vertices in $G$, completely throw away the mates of $b_0$, $b_1$ and $b_2$ and then just delete $w_0$, $w_1$ and $w_2$ from the remainder.

First we show that in both the positive and negative cases, the mate in $G$ of any black vertex $b \notin \{b_0,b_1,b_2\}$ contains the vertex $w_0$ if and only if it contains $w_R$. This follows because if $w_0 \in \mate{G}{b}$ then $w_1$, $w_2 \notin \mate{G}{b}$ (as this would result in either $b_0$ or $b_1$ being twice-dominated). As $b \not= b_2$, the mate of $b$ must dominate $b_2$ exactly once and so $w_R \in \mate{G}{b}$. For the converse, note that if $w_R \in \mate{G}{b}$, then $w_1$, $w_2 \notin \mate{G}{b}$ because that would cause $b_2$ to be twice-dominated. In order to dominate $b_1$, it must be the case that $w_0 \in \mate{G}{b}$. Symmetrically, $w_2 \in \mate{G}{b}$ if and only if $w_L \in \mate{G}{b}$.

Next we will use this fact to show that for any $b \notin \{b_0, b_1, b_2\}$, the set $\mate{G}{b} \backslash \{w_0, w_1, w_2\}$ is a mate for the vertex $b$ in $\compress{G}{L}$. First observe that $\mate{G}{b}$ contains exactly one vertex in $\{w_0, w_1, w_2\}$. This follows because $b_1$ must be dominated at least once, and only $\mate{G}{b}$ can contain either zero or two of these vertices (in the negative, positive case respectively). If $\mate{G}{b}$ contains $w_0$ (the case $w_2$ is equivalent), then it also contains $w_R$, and although $b_L$ is no longer dominated in $\compress{G}{L}$ by the deleted vertex $w_0$, it is now dominated by $w_R$ via the added edge. Hence $\mate{G}{b} \backslash \{w_0, w_1, w_2\}$ dominates the black vertices in $\gdl$ exactly the same number of times as $\mate{G}{b}$ dominates the same vertices in $G$. If $\mate{G}{b}$ contains $w_1$, then it does not contain any of $w_0$, $w_2$, $w_L$, or $w_R$, for otherwise $b_{0}$ or $b_{2}$ is dominated twice, and $b\ne b_{0},b_{2}$. Therefore every black vertex of $\gdl$ is dominated by $\mate{G}{b} \backslash \{w_0, w_1, w_2\}$ exactly the same number of times as the same vertex was dominated by $\mate{G}{b}$ in $G$.

So we have shown that each black vertex has a mate, and therefore there is a solution to the defining (matrix) equation of a Lehman matrix and so $\compress{G}{L}$ is a Lehman graph. As $\compress{G}{L}$ has six fewer vertices than $G$, and each mate in $\gdl$ has cardinality $s-1$, we see that its parameters are $(n-3,3,s-1)$.
\end{proof}

The consequence of this result is that any cubic Lehman graph on more than $10$ vertices with $k=1$ and with a $3$-rung ladder segment can be reduced to a smaller cubic Lehman graph with $k=1$. If the reduced graph itself has a $3$-rung ladder segment, then the process can be iterated, ending with either the cubic M\"obius ladder on $10$ vertices or an ``irreducible'' Lehman graph with no $3$-rung ladder segment. Our exhaustive computer search for cubic Lehman graphs with $k=1$ on up to $40$ vertices show that there are just two irreducible graphs in this range, the smaller of which is shown in Figure~\ref{fig:no3rung}. 

\subsection{Ladder Insertion For Cubic Lehman Graphs}

Now we consider when the reverse operation of a $3$-rung ladder reduction can be performed. The reverse operation consists of removing a non-incident pair of edges $e = (w_L,b_R)$ and $f=(w_R,b_L)$, adding a new $3$-rung ladder segment $L$ (again labelled as in Figure~\ref{fig:3rung}), and finally adding the edges $(w_0,b_L)$, $(w_2,b_R)$ $(w_L,b_0)$ and $(w_2,b_R)$. We denote the resulting graph by $G\mathord{\uparrow}\{e,f\}$ and call the pair of edges \emph{expandable} if $G\mathord{\uparrow}\{e,f\}$ is a Lehman graph.  

For a cubic Lehman graph with $k=1$, the partition of the edge set into the $2$-regular auxiliary graph and its complementary perfect matching play a major role in determining when a $3$-ladder expansion yields a larger Lehman graph. The next proposition gives some simple conditions sufficient to guarantee that $\{e,f\}$ is an expandable pair of edges. In this proof, we will frequently need to refer to the rows, columns and individual entries of several different matrices, so to avoid cramped subscripts we temporarily use more prominent notation. More precisely, if a matrix $X$ has rows indexed by black vertices and columns by white vertices then $X(b,w)$ will refer to the $(b,w)$-entry of the matrix, $X(b)$ will refer to the \emph{row} indexed by $b$ and $X(w)$ will refer to the \emph{column} indexed by $w$.

\begin{proposition}\label{prop:positiveexpansion}
Let $G$ be a $(3s-1,3,s)$-Lehman graph and let $e = (w_L, b_R)$ and $f = (w_R, b_L)$ be non-incident edges of $G$. If $e$ and $f$ are in the auxiliary graph of $G$, the mates of $b_L$ and $b_R$ are disjoint, and the mates of $w_L$ and $w_R$ are disjoint, then $G\mathord{\uparrow}\{e,f\}$ is a $(3(s+1)-1,3,s+1)$-Lehman graph.
\end{proposition}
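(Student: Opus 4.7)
The approach is to apply Proposition~\ref{prop2}: it suffices to exhibit, for each black vertex $b$ of $G' := G\mathord{\uparrow}\{e,f\}$, a set $\Gamma'(b)$ of white vertices that dominates $b$ twice and every other black vertex exactly once. First I would unpack the hypotheses. By the definition of the auxiliary graph together with Proposition~\ref{prop4}, the assumption that $e$ and $f$ both lie in $\operatorname{aux}(G)$ yields $w_R \in \Gamma_G(b_L)$, $w_L \in \Gamma_G(b_R)$, $b_L \in \Gamma_G(w_R)$, and $b_R \in \Gamma_G(w_L)$. The disjointness of $\Gamma_G(b_L)$ and $\Gamma_G(b_R)$ then forces $w_L \notin \Gamma_G(b_L)$ and $w_R \notin \Gamma_G(b_R)$, while the disjointness of $\Gamma_G(w_L)$ and $\Gamma_G(w_R)$, again via Proposition~\ref{prop4}, prevents any mate $\Gamma_G(b)$ from containing both $w_L$ and $w_R$.

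For each $b \in V(G) \setminus \{b_L, b_R\}$ I would split according to $\Gamma_G(b) \cap \{w_L, w_R\}$ and set $\Gamma'(b) := \Gamma_G(b) \cup \{w_1\}$ if this intersection is empty, $\Gamma'(b) := \Gamma_G(b) \cup \{w_2\}$ if it is $\{w_L\}$, and $\Gamma'(b) := \Gamma_G(b) \cup \{w_0\}$ if it is $\{w_R\}$ (the fourth possibility having just been excluded). For the attachment vertices I set $\Gamma'(b_L) := \Gamma_G(b_L) \cup \{w_0\}$ and $\Gamma'(b_R) := \Gamma_G(b_R) \cup \{w_2\}$, and for the outer rung vertices $\Gamma'(b_0) := \Gamma_G(b_R) \cup \{w_1\}$ and $\Gamma'(b_2) := \Gamma_G(b_L) \cup \{w_1\}$. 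Each of these mates has size $s + 1$, which is the correct mate size for a $(3(s+1) - 1, 3, s+1)$-Lehman graph. Verifying the defining identity for each such $\Gamma'(b)$ is a short intersection count using the modified neighbourhoods $N_{G'}(b_L) = (N_G(b_L) \setminus \{w_R\}) \cup \{w_0\}$ and $N_{G'}(b_R) = (N_G(b_R) \setminus \{w_L\}) \cup \{w_2\}$, the symmetric identities for $w_L$ and $w_R$, and the internal ladder neighbourhoods $N_{G'}(b_0) = \{w_L, w_0, w_1\}$ and $N_{G'}(b_2) = \{w_R, w_1, w_2\}$.

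The main obstacle is the mate of the middle rung vertex $b_1$, which cannot be obtained as a one-vertex extension of any $\Gamma_G(b)$. I would first observe that $\Gamma'(b_1)$ must contain exactly two of the three neighbours $\{w_0, w_1, w_2\}$ of $b_1$, and then rule out the pairs that contain $w_1$ (they each force $b_0$ or $b_2$ to be doubly dominated). Hence $\{w_0, w_2\} \subseteq \Gamma'(b_1)$, and the remaining $s - 1$ white vertices forming a set $X$ must come from $G_W$, avoid $\{w_L, w_R\}$ (else $b_0$ or $b_2$ is overdominated), dominate each $b \in V(G) \setminus \{b_L, b_R\}$ exactly once, and miss $\{b_L, b_R\}$ altogether. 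I claim that $X := G_W \setminus (\Gamma_G(b_L) \cup \Gamma_G(b_R))$ satisfies all four requirements. The disjointness of $\Gamma_G(b_L)$ and $\Gamma_G(b_R)$ gives $|X| = n - 2s = s - 1$; the memberships $w_L \in \Gamma_G(b_R)$ and $w_R \in \Gamma_G(b_L)$ exclude $w_L$ and $w_R$ from $X$; and for any $b \in G_B$ the disjointness yields
\[
|N_G(b) \cap X| \;=\; 3 - |N_G(b) \cap \Gamma_G(b_L)| - |N_G(b) \cap \Gamma_G(b_R)|,
\]
which evaluates to $1$ when $b \notin \{b_L, b_R\}$ and to $0$ when $b \in \{b_L, b_R\}$. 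Combining the contribution of $X$ (one to every black vertex of $V(G) \setminus \{b_L, b_R\}$ and zero elsewhere) with that of $\{w_0, w_2\}$ (one to each of $b_L, b_0, b_2, b_R$, two to $b_1$, and zero elsewhere) gives exactly the mate conditions for $b_1$. Once every black vertex of $G'$ has been shown to have a mate, the discussion following Proposition~\ref{prop2} implies that $G'$ is a Lehman graph, and counting vertices and mate sizes delivers the claimed parameters $(3(s+1) - 1, 3, s + 1)$.
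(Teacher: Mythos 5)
Your proof is correct and is essentially the paper's own argument translated from matrix language into the language of mates: the sets $\Gamma'(b)$ you define coincide exactly with the rows of the partner matrix $B'$ constructed in the paper (for instance your $\Gamma'(b_1) = \bigl(G_W \setminus (\Gamma_G(b_L)\cup\Gamma_G(b_R))\bigr) \cup \{w_0,w_2\}$ is the row $B'(b_1) = (\mathbf{1}-B(b_R)-B(b_L) \mid 1,0,1)$), and your domination counts are the same dot-product verifications organised by mate rather than by row of $A'$. The only addition is your short derivation motivating the form of $\Gamma'(b_1)$, which the paper simply writes down.
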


\begin{proof}
Suppose that $A$ and $A'$ are the bipartite adjacency matrices of $G$ and $G\mathord{\uparrow}\{e,f\}$ respectively. Simply translating the insertion operation into matrix terms, we see that $A'$ is obtained from $A$ by adding three additional rows and columns, and has the block form
\[
A' = 
\begin{bmatrix}
A'_{11} & A'_{12} \\
A'_{21} & A'_{22}
\end{bmatrix},
\]
where $A'_{11}$ is equal to $A$ except that $A'_{11} (b_L,w_R)$ and $A'_{11} (b_R,w_L)$ have been changed from $1$ to $0$ (the dashed arrows in Figures~\ref{fig:pos3ladder} and \ref{fig:neg3ladder} indicate that the original $1$ has been ``moved'' leaving behind a $0$ entry). Then $A'_{12}$ is a $(3s-1) \times 3$ matrix with just two non-zero entries in the $(b_L, w_0)$ and $(b_R,w_2)$ positions, and $A'_{21}$ is a $3 \times (3s-1)$ matrix with just two non-zero entries in the $(b_0, w_L)$ and $(b_2,w_R)$ positions. Finally $A'_{22}$ is the $3 \times 3$ matrix shown in Figure~\ref{fig:pos3ladder}.

\begin{figure}[htb]
\begin{center}
\begin{tikzpicture}[scale=0.45]

\node at (1,10) {$A$};
\draw (0,0) rectangle (11,11);
\node (wL) at (2.5,11.5){$w_L$};
\node (wR) at (4.5,11.5){$w_R$};
\node (w0) at (8.5,11.5){$w_0$};
\node (w1) at (9.5,11.5){$w_1$};
\node (w2) at (10.5,11.5){$w_2$};

\node (bR) at (-0.5,5.5){$b_R$};
\node (bL) at (-0.5,8.5){$b_L$};
\node (b0) at (-0.5,2.5){$b_0$};
\node (b1) at (-0.5,1.5){$b_1$};
\node (b2) at (-0.5,0.5){$b_2$};

\node [inner sep = 1pt] (wlbr) at (2.5,5.5) {\small $0$};
\node [inner sep = 1pt] (wrbl) at (4.5,8.5) {\small $0$};

\node [inner sep = 1pt] (b0wl) at (2.5,2.5) {\small $1$};
\node [inner sep = 1pt] (b2wr) at (4.5,0.5) {\small $1$};
\node [inner sep = 1pt] (blw0) at (8.5,8.5) {\small $1$};
\node [inner sep = 1pt] (brw2) at (10.5,5.5) {\small $1$};

\draw [dashed, ->] (wlbr) -- (b0wl);
\draw [dashed, ->] (wrbl) -- (b2wr);
\draw [dashed, ->] (wlbr) -- (brw2);
\draw [dashed, ->] (wrbl) -- (blw0);

\draw [thick, black] (8,0) -- (8,11);
\draw [thick, black] (0,3) -- (11,3);

\node at (8.5,0.5) {\small $0$};
\node at (9.5,0.5) {\small $1$};
\node at (10.5,0.5) {\small $1$};
\node at (8.5,1.5) {\small $1$};
\node at (9.5,1.5) {\small $1$};
\node at (10.5,1.5) {\small $1$};
\node at (8.5,2.5) {\small$1$};
\node at (9.5,2.5) {\small $1$};
\node at (10.5,2.5) {\small $0$};

\end{tikzpicture}
\hspace{0.5cm}
\begin{tikzpicture}[scale=0.45]
\node at (1,10) {$B$};
\draw (0,0) rectangle (11,11);
\node (wL) at (2.5,11.5){$w_L$};
\node (wR) at (4.5,11.5){$w_R$};
\node (w0) at (8.5,11.5){$w_0$};
\node (w1) at (9.5,11.5){$w_1$};
\node (w2) at (10.5,11.5){$w_2$};

\node (bR) at (-0.5,5.5){$b_R$};
\node (bL) at (-0.5,8.5){$b_L$};
\node (b0) at (-0.5,2.5){$b_0$};
\node (b1) at (-0.5,1.5){$b_1$};
\node (b2) at (-0.5,0.5){$b_2$};

\draw [thick, black] (8,0) -- (8,11);
\draw [thick, black] (0,3) -- (11,3);

\node at (2.5,5.5) {\small $1$};
\node at (4.5,8.5) {\small $1$};

\node at (2.5,8.5) {\small $0$};
\node at (4.5,5.5) {\small $0$};

\node at (8.5,0.5) {\small $0$};
\node at (9.5,0.5) {\small  $1$};
\node at (10.5,0.5) {\small $0$};
\node at (8.5,1.5) {\small $1$};
\node at (9.5,1.5) {\small $0$};
\node at (10.5,1.5) {\small $1$};
\node at (8.5,2.5) {\small $0$};
\node at (9.5,2.5) {\small $1$};
\node at (10.5,2.5) {\small $0$};

\node [rotate = -90] (BwR) at (8.5,7) {\small $B(w_R)$};
\node [rotate = -90] (jw) at (9.5,7) {\small $\allones -B(w_R)-B(w_L)$};
\node [rotate = -90] (BwL) at (10.5,7) {\small $B(w_L)$};

\draw [<-] (8.5,3.1)--(BwR);
\draw [->] (BwR)--(8.5,10.9);

\draw [<-] (10.5,3.1)--(BwL);
\draw [->] (BwL)--(10.5,10.9);

\node (Bbl) at (4,0.5) {\small $B(b_L)$};
\draw [<-] (0.1,0.5)--(Bbl);
\draw [<-] (7.9,0.5)--(Bbl);

\node (jbrbl) at (4,1.5) {\small $\allones -B(b_R)-B(b_L)$};

\node (Bbr) at (4,2.5) {\small $B(b_R)$};
\draw [<-] (0.1,2.5)--(Bbr);
\draw [<-] (7.9,2.5)--(Bbr);

\end{tikzpicture}
\caption{$A'$ and $B'$ after a $3$-ladder insertion in a positive Lehman graph}
\label{fig:pos3ladder}
\end{center}
\end{figure}
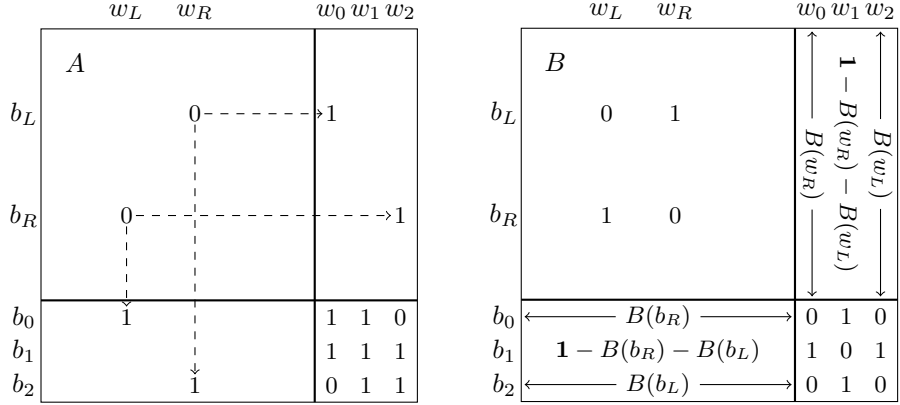

Now we will define a matrix $B'$, and then prove that $A'B' = J + I$ as required. This is constructed from $B$ (the partner of $A$ in the Lehman pair $(A,B)$) by adding three additional rows and columns, and is illustrated in the second diagram of Figure~\ref{fig:pos3ladder}. The upper left submatrix of $B'$ is simply equal to $B$, while the new rows and columns of $B'$ are defined in the following way: $B'(b_0)$ is obtained by duplicating $B(b_R)$ and extending it by adding three more coordinates equal to $(0,1,0)$. Thus, using block vector notation, and with $\allones$ being the all-ones vector, we have
\begin{linenomath}
\begin{align*}
B'(b_0) &= (B(b_R) \mid 0,1,0), \\
B'(b_1) &= (\allones - B(b_L) - B(b_R) \mid 1,0,1), \\
B'(b_2) &= (B(b_L) \mid 0,1,0). 
\end{align*}
\end{linenomath}
The new columns are defined analogously using vectors $B(w_R)$, $\allones-B(w_R)-B(w_L)$, and $B(w_L)$, and extending them as shown in Figure~\ref{fig:pos3ladder}. The condition that the mates of $b_L$ and $b_R$ are disjoint ensures that $\allones-B(b_R) - B(b_L)$ is a $(0,1)$-vector and that every column of $B'_{21}$ sums to one, and analogously for the rows of $B'_{12}$. Note that because $e$ and $f$ are in the auxiliary graph, we know that $w_{R}$ is in the mate of $b_{L}$, and $w_{L}$ is in the mate of $b_{R}$. Thus $B(b_{L},w_{R})=B(b_{R},w_{L})=1$. Since these mates are disjoint, we also see that $B(b_{L},w_{L})=B(b_{R},w_{R})=0$.

Now we must show that for every pair of black vertices $b$, $b^* \in V(G) \cup \{b_0,b_1,b_2\}$ we have
\[
A'(b) \cdot B'(b^*) = 
\begin{cases}
1, & b \not= b^*;\\
2, & b = b^*.
\end{cases}
\]
We break into cases according to whether $b \in \{b_0, b_1, b_2\}$, $b \in \{b_L ,b_R\}$,  or $b \in V(G) \backslash \{b_L ,b_R\}$.

\noindent
{\bf Case 1}: $b \in \{b_0,b_1,b_2\}$.

It is easy to check directly that when both $b, b^* \in \{b_0, b_1, b_2\}$ the dot products have the correct values, so we may take $b^* \in V(G)$.  First assume that $b=b_1$, in which case the dot product $A'(b) \cdot B'(b^*) = B'(b^*, w_0) + B'(b^*,w_1) + B'(b^*,w_2)$ which is equal to one, as it is simply the row-sum of $B'_{12}(b^*)$. Next assume $b = b_0$, in which case the dot product $A'(b) \cdot B'(b^*) = B'(b^*, w_L) + B'(b^*,w_0) + B'(b^*,w_1)$. If $B'(b^{*},w_{L})=1$, then $B'(b^{*},w_{0})=0$, because the mates of $b_{L}$ and $b_{R}$ are disjoint. Similarly $B'(b^{*},w_{1})=0$, so $A'(b) \cdot B'(b^*)=1$. If $B'(b^{*},w_{L})=0$, then exactly one of $B'(b^{*},w_{0})$ and $B'(b^{*},w_{1})$ is equal to one, so again the dot product takes the value one. By symmetry the same holds when $b = b_2$. 

\noindent
{\bf Case 2}: $b \in \{b_L, b_R\}$.

Without loss of generality we assume that $b = b_L$. If $b^{*}\in V(G)$, then the dot product $A'(b) \cdot B'(b^*)$ will be equal to the dot product $A(b) \cdot B(b^*)$, except in the cases where the $(b^*,w_R)$-entry of $B'$ is one (when the dot product will be reduced by one), and where the $(b^*,w_0)$-entry of $B'$ is one (where it will be increased by one). By construction, either both or neither of these occur and so the net result is that $A'(b) \cdot B'(b^*) = A(b) \cdot B(b^*)$. If $b^{*}=b_{0}$, then $A'(b)$ and $B'(b^{*})$ do not share a non-zero entry in the last three columns, or the $w_{L}$ or $w_{R}$ columns. It now follows that $A'(b)\cdot B'(b^{*})=A(b_{L})\cdot B(b_{R})=1$. Assume that $b^{*}=b_{1}$. Note that there is a single column from the last three columns in which $A'(b)$ and $B'(b^{*})$ are both non-zero. In all columns other than the last three, $A'(b)$ contains two non-zeroes. One of these non-zeroes is in the same column as a non-zero of $B(b_{R})$, and the other is in the same column as a non-zero of $B(b_{L})$. From these facts we see that $A'(b)\cdot B'(b^{*})=1$. Finally, if $b^{*}=b_{2}$, then the zero in $A'(b_{L},w_{R})$ means that $A'(b_{L})\cdot B'(b^{*})=A(b_{L})\cdot B(b_{L})-1=1$. So now we have completed the analysis when $b=b_{L}$.

\noindent
{\bf Case 3}: $b \in V(G) \backslash \{b_L, b_R\}$.

The last three coordinates of $A'(b)$ are all zero and so any dot products involving $A'(b)$ can be rewritten using only $A(b)$. If $b^{*} \in V(G)$, then $A'(b) \cdot B'(b^*) = A(b) \cdot B(b^*)$, and because $(A,B)$ is a Lehman pair, these dot products have the required values. If $b^* = b_0$, then $A'(b) \cdot B'(b^*) = A(b) \cdot B(b_R)$ which again has the required value because $(A,B)$ is a Lehman pair. A symmetrical argument holds when $b^{*}=b_{2}$. Finally if $b^* = b_1$ then $A'(b) \cdot B'(b^*) = A(b) \cdot (\allones - B(b_R) - B(b_L))$ which is equal to $3 - 1 - 1$ using the fact that $A$ is cubic, and that the dot-products with the two rows of $B$ are each equal to $1$.
\end{proof}

There is one simple situation where the conditions for inserting a $3$-ladder are automatically satisfied, namely when there is a rung connecting a vertex of $e$ to a vertex of $f$. To show this, we start with a lemma outlining how the mates of two vertices intersect.

\begin{lemma}\label{disjointmates}
Suppose that $A$ is the matrix associated with a cubic Lehman graph, and that $B$ is the $(0,1)$-matrix satisfying $AB^{T}=J+I$. Let $b_1 \ne b_2$ be distinct black vertices, let $w$ be a white vertex and suppose that $A(b_1,w) = A(b_2,w) = 1$ (in other words, $w$ is adjacent to $b_1$ and $b_2$). Then for all $w' \ne w$, either $B(b_1,w')=0$ or $B(b_2,w')=0$. 
\end{lemma}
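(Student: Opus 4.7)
My plan is to argue by contradiction, using the ``dual'' Lehman equation $A^{T}B = J+I$ supplied by Corollary~\ref{cor1}. Specifically, I would suppose for the sake of contradiction that there exists a white vertex $w' \neq w$ with $B(b_{1}, w') = B(b_{2}, w') = 1$, i.e.\ $w' \in \Gamma(b_{1}) \cap \Gamma(b_{2})$, and then derive a contradiction by exhibiting an off-diagonal entry of $A^{T}B$ whose value is at least $2$.

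The first step is to reinterpret the supposed common mate $w'$ as a statement about the mate of a \emph{white} vertex. By Proposition~\ref{prop4}, the condition $w' \in \Gamma(b_{i})$ is equivalent to $b_{i} \in \Gamma(w')$, so both $b_{1}$ and $b_{2}$ lie in the mate of $w'$. Combining this with the hypothesis that $w$ is a common neighbour of $b_{1}$ and $b_{2}$, i.e.\ $b_{1}, b_{2} \in N(w)$, I would conclude
\[
\{b_{1}, b_{2}\} \;\subseteq\; N(w) \cap \Gamma(w'),
\]
and hence $|N(w) \cap \Gamma(w')| \geq 2$.

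The second step is to contradict this inequality using Corollary~\ref{cor1}. Expanding the $(w, w')$-entry of $A^{T}B = J+I$ gives
\[
(A^{T}B)_{w, w'} \;=\; \sum_{b} A(b, w)\, B(b, w') \;=\; |N(w) \cap \Gamma(w')|,
\]
and since $w \neq w'$, this entry is exactly $1$, contradicting the bound established in the previous paragraph. Thus no such $w'$ exists, which is the desired conclusion.

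The only real ``obstacle'' here is conceptual rather than technical: one needs to notice that the intersection of interest, $|N(w) \cap \Gamma(w')|$, is naturally governed by $A^{T}B$ (the mates of the \emph{white} vertices) rather than by the original defining equation $AB^{T} = J + I$ (the mates of the black vertices), so Corollary~\ref{cor1} is exactly the right tool. Once this identification is made, the proof collapses to the two short observations above, with no case analysis required and no use of the cubic hypothesis beyond the fact that we are in a Lehman graph with $k=1$.
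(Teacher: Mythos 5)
Your proof is correct and is essentially the paper's own argument: the paper computes the $(w',w)$-entry of $B^{T}A=J+I$ (via Theorem~\ref{BridgesRyser}) and observes it would be at least $2$, which is exactly the same sum $\sum_{b}A(b,w)B(b,w')$ that you evaluate as the $(w,w')$-entry of $A^{T}B=J+I$ via Corollary~\ref{cor1}. The detour through Proposition~\ref{prop4} is harmless but unnecessary, since the matrix entry already counts $|N(w)\cap\Gamma(w')|$ directly.
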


\begin{proof}
If there is some $w'$ such that $B(b_1,w')= B(b_2,w')=1$, then the $(w',w)$-entry of $B^TA$ is at least $2$ contradicting Theorem~\ref{BridgesRyser}.
\end{proof}

\begin{corollary}
Let $G$ be a $(3s-1,3,s)$ Lehman graph and let $e = (w_L, b_R)$ and $f = (w_R, b_L)$ be edges of $G$. If $(w_L, b_L)$ is a rung of $G$, then $G\mathord{\uparrow}\{e,f\}$ is a $(3(s+1)-1,3,s+1)$ Lehman graph.
\end{corollary}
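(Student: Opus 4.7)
The plan is to verify the three hypotheses of Proposition \ref{prop:positiveexpansion} and then invoke it directly: namely, that $e$ and $f$ lie in the auxiliary graph of $G$, that $\mate{}{b_L}$ and $\mate{}{b_R}$ are disjoint, and that $\mate{}{w_L}$ and $\mate{}{w_R}$ are disjoint.

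For the first condition, I would use that the rungs of $G$ form a perfect matching (being the edges of $G$ not lying in the $2$-regular auxiliary graph $A\circ B$), so each vertex is incident with a unique rung. Since the rung at $w_L$ is $(w_L,b_L)$, the edge $e=(w_L,b_R)$ must differ from the rung and hence lies in the auxiliary graph; symmetrically the rung at $b_L$ is $(w_L,b_L)$, so $f=(w_R,b_L)$ is in the auxiliary graph. As a by-product, $b_L\ne b_R$ and $w_L\ne w_R$, so $e$ and $f$ are non-incident, as the definition of $G\mathord{\uparrow}\{e,f\}$ requires.

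For the second condition, the vertex $w_L$ is a common neighbour of $b_L$ and $b_R$ in $G$, so Lemma \ref{disjointmates} already rules out any $w'\ne w_L$ lying in both $\mate{}{b_L}$ and $\mate{}{b_R}$. The remaining case $w'=w_L$ is handled by the rung hypothesis itself: since $(w_L,b_L)$ is a rung we have $A(b_L,w_L)=1$ but $(A\circ B)(b_L,w_L)=0$, so $B(b_L,w_L)=0$ and $w_L\notin\mate{}{b_L}$. The third condition is entirely symmetric, using the natural row/column transpose of Lemma \ref{disjointmates}: if a single black vertex $b$ is adjacent to two distinct white vertices $w_1,w_2$, then no $b'\ne b$ lies in both $\mate{}{w_1}$ and $\mate{}{w_2}$. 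This is proved identically to Lemma \ref{disjointmates} but reading off the $(b,b')$-entry of $AB^T=J+I$ instead of the $(w',w)$-entry of $B^TA=J+I$. Applying this variant with $b=b_L$ together with $b_L\notin\mate{}{w_L}$ (which follows from $B(b_L,w_L)=0$ and Proposition \ref{prop4}) gives the desired disjointness.

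With all three hypotheses verified, Proposition \ref{prop:positiveexpansion} yields that $G\mathord{\uparrow}\{e,f\}$ is a $(3(s+1)-1,3,s+1)$-Lehman graph. There is no real obstacle: the single assumption that $(w_L,b_L)$ is a rung is exactly what supplies both ingredients of each disjointness conclusion --- the common-neighbour structure that triggers Lemma \ref{disjointmates}, and the fact that the rung edge itself lies outside the auxiliary graph, which rules out the lone case the disjointness lemma does not cover.
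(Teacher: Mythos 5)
Your proposal is correct and follows essentially the same route as the paper: verify the three hypotheses of Proposition~\ref{prop:positiveexpansion}, using the fact that a rung's incident edges lie in the auxiliary graph, Lemma~\ref{disjointmates} plus $B(b_L,w_L)=0$ for the disjointness of the black mates, and the symmetric (transposed) argument for the white mates. The only difference is that you spell out the transposed version of Lemma~\ref{disjointmates} explicitly where the paper simply says ``a symmetric argument,'' which is a faithful expansion rather than a new approach.
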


\begin{proof}
By definition, the edges of $G$ incident with (but not equal to) a rung are in the auxiliary graph, and so satisfy the first condition of Proposition~\ref{prop:positiveexpansion}. Now we show that the mates of $b_L$ and $b_R$ are disjoint. As $A(b_L, w_L) = A(b_R,w_L) = 1$ it follows from Lemma~\ref{disjointmates} that for any white vertex $w \not= w_L$, at least one of $\{B(b_L, w), B(b_R,w)\}$ is zero. As $(b_L,w_L)$ is a rung, $B(b_L,w_L)$ is zero. This covers all white vertices and so no white vertex is in the mate of both $b_L$ and $b_R$. A symmetric argument shows that the mates of $w_L$ and $w_R$ are disjoint.
\end{proof}

The situation for negative cubic Lehman graphs is slightly different in that certain matrix entries must be $0$ rather than $1$, but it is very similar in style, and we omit all but the broadest outline. This construction was previously given by Boros, Gurvich and Hougardy \cite{MR1936943} as the inverse operation of their reduction operation as described in Section~\ref{reduction}.

\begin{proposition}\label{prop:negativeeexpansion}
Let $G$ be a $(3s+1,3,s)$-Lehman graph and let $e = (w_L, b_R)$ and $f = (w_R, b_L)$ be edges of $G$. If $w_L$ is not in the mate of $b_L$ and $w_R$ is not in the mate of $b_R$ and the mates of $b_L$ and $b_R$ are disjoint and the mates of $w_L$ and $w_R$ are disjoint, then $G\mathord{\uparrow}\{e,f\}$ is a $(3(s+1)+1,3,s+1)$-Lehman graph.
\end{proposition}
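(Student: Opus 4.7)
The plan is to follow the strategy of Proposition \ref{prop:positiveexpansion} closely: write down the bipartite adjacency matrix $A'$ of $G\mathord{\uparrow}\{e,f\}$ in block form, define a candidate partner matrix $B'$ by extending $B$ with three new rows and columns, and verify $A'(B')^{T}=J-I$ by a case analysis on which vertex of $G\mathord{\uparrow}\{e,f\}$ indexes the chosen row of $A'$. The block form of $A'$ is precisely the one shown in the left panel of Figure~\ref{fig:pos3ladder}, so the real content lies in defining $B'$.

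The preliminary observation that does most of the work particular to the negative setting is that, since $G$ is negative, the auxiliary graph is $0$-regular, i.e. $A\circ B=0$. Thus every edge $(b,w)$ of $G$ forces $B(b,w)=0$. Applied to $e=(w_L,b_R)$ and $f=(w_R,b_L)$, this yields $B(b_R,w_L)=B(b_L,w_R)=0$, and combining with the hypotheses $w_L\notin\Gamma(b_L)$ and $w_R\notin\Gamma(b_R)$ shows that the entire $\{b_L,b_R\}\times\{w_L,w_R\}$ submatrix of $B$ is zero. This is the negative analogue of that submatrix being the $2\times 2$ identity in the positive case, and it is what dictates the new pattern.

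Next I would define $B'$ block-wise. Its upper-left $(3s+1)\times(3s+1)$ block is $B$. The new rows for $b_0,b_1,b_2$ restricted to the original columns are
\[
B(b_R),\qquad \allones-B(b_L)-B(b_R),\qquad B(b_L),
\]
and the new columns for $w_0,w_1,w_2$ restricted to the original rows are defined symmetrically from $B(w_R)$, $\allones-B(w_L)-B(w_R)$ and $B(w_L)$. The disjointness hypotheses guarantee that the middle entries are genuine $(0,1)$-vectors. The $3\times 3$ lower-right block of $B'$, which is the key place where the sign of $k$ intervenes, I would take to be
\[
\begin{pmatrix} 0 & 0 & 1 \\ 0 & 0 & 0 \\ 1 & 0 & 0 \end{pmatrix},
\]
encoding that the mate of $b_0$ in $G\mathord{\uparrow}\{e,f\}$ extends $\Gamma(b_R)$ by adding $w_2$ (the unique white ladder vertex not adjacent to $b_0$), the mate of $b_2$ extends $\Gamma(b_L)$ by adding $w_0$, and the mate of $b_1$ picks up nothing from the ladder.

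The verification then proceeds by the same three-case split as in Proposition \ref{prop:positiveexpansion}: (i) $b\in\{b_0,b_1,b_2\}$, (ii) $b\in\{b_L,b_R\}$, and (iii) $b\in V(G)\setminus\{b_L,b_R\}$. In each case the four vanishing entries of $B$ noted above let us rewrite the dot product $A'(b)\cdot B'(b^{\ast})$ as a combination of entries of $AB^{T}=J-I$, and the bookkeeping collapses to $0$ on the diagonal and $1$ elsewhere. The main subtlety is identifying the correct $3\times 3$ corner pattern of $B'$; once the four zeroes in $B$ are in hand, that pattern is essentially forced by the requirement that each new black vertex have empty intersection with its own neighbourhood, and the remaining verification is a mirror image of the positive case.
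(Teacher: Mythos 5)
Your proposal is correct and follows exactly the route the paper takes: the paper's proof of Proposition~\ref{prop:negativeeexpansion} simply defers to the argument of Proposition~\ref{prop:positiveexpansion} with the modified matrices of Figure~\ref{fig:neg3ladder}, and your $B'$ (including the $3\times 3$ corner block and the vanishing of the $\{b_L,b_R\}\times\{w_L,w_R\}$ submatrix of $B$, which you correctly derive from the $0$-regularity of the auxiliary graph together with the stated hypotheses) coincides with that figure. In fact you supply more of the verification than the paper does, and the case analysis goes through as you describe.
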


\begin{proof}
As previously, let $A$, $B$ and $A'$, $B'$ denote the matrices associated with $G$ and $G\mathord{\uparrow}\{e,f\}$. Figure~\ref{fig:neg3ladder} shows how $A'$ is related to $A$ (which is the same as in positive case), and how $B'$ is related to $B$ (which is slightly different to the positive case). The arguments showing that the rows of $A'$ and $B'$ have the ``right'' dot product are entirely analogous to those given in Proposition~\ref{prop:positiveexpansion} for the positive case. 
\end{proof}

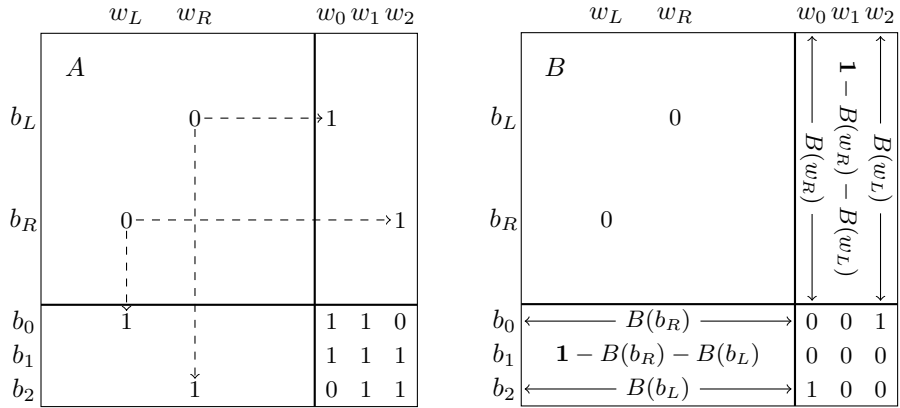
\begin{figure}[htb]
\begin{center}
\begin{tikzpicture}[scale=0.45]

\node at (1,10) {$A$};
\draw (0,0) rectangle (11,11);
\node (wL) at (2.5,11.5){$w_L$};
\node (wR) at (4.5,11.5){$w_R$};
\node (w0) at (8.5,11.5){$w_0$};
\node (w1) at (9.5,11.5){$w_1$};
\node (w2) at (10.5,11.5){$w_2$};

\node (bR) at (-0.5,5.5){$b_R$};
\node (bL) at (-0.5,8.5){$b_L$};
\node (b0) at (-0.5,2.5){$b_0$};
\node (b1) at (-0.5,1.5){$b_1$};
\node (b2) at (-0.5,0.5){$b_2$};

\node [inner sep = 1pt] (wlbr) at (2.5,5.5) {\small $0$};
\node [inner sep = 1pt] (wrbl) at (4.5,8.5) {\small $0$};

\node [inner sep = 1pt] (b0wl) at (2.5,2.5) {\small $1$};
\node [inner sep = 1pt] (b2wr) at (4.5,0.5) {\small $1$};
\node [inner sep = 1pt] (blw0) at (8.5,8.5) {\small $1$};
\node [inner sep = 1pt] (brw2) at (10.5,5.5) {\small $1$};

\draw [dashed, ->] (wlbr) -- (b0wl);
\draw [dashed, ->] (wrbl) -- (b2wr);
\draw [dashed, ->] (wlbr) -- (brw2);
\draw [dashed, ->] (wrbl) -- (blw0);

\draw [thick, black] (8,0) -- (8,11);
\draw [thick, black] (0,3) -- (11,3);

\node at (8.5,0.5) {\small $0$};
\node at (9.5,0.5) {\small $1$};
\node at (10.5,0.5) {\small $1$};
\node at (8.5,1.5) {\small $1$};
\node at (9.5,1.5) {\small $1$};
\node at (10.5,1.5) {\small $1$};
\node at (8.5,2.5) {\small$1$};
\node at (9.5,2.5) {\small $1$};
\node at (10.5,2.5) {\small $0$};

\end{tikzpicture}
\hspace{0.5cm}
\begin{tikzpicture}[scale=0.45]
\node at (1,10) {$B$};
\draw (0,0) rectangle (11,11);
\node (wL) at (2.5,11.5){$w_L$};
\node (wR) at (4.5,11.5){$w_R$};
\node (w0) at (8.5,11.5){$w_0$};
\node (w1) at (9.5,11.5){$w_1$};
\node (w2) at (10.5,11.5){$w_2$};

\node (bR) at (-0.5,5.5){$b_R$};
\node (bL) at (-0.5,8.5){$b_L$};
\node (b0) at (-0.5,2.5){$b_0$};
\node (b1) at (-0.5,1.5){$b_1$};
\node (b2) at (-0.5,0.5){$b_2$};

\draw [thick, black] (8,0) -- (8,11);
\draw [thick, black] (0,3) -- (11,3);

\node at (2.5,5.5) {\small $0$};
\node at (4.5,8.5) {\small $0$};

\node at (8.5,0.5) {\small $1$};
\node at (9.5,0.5) {\small  $0$};
\node at (10.5,0.5) {\small $0$};
\node at (8.5,1.5) {\small $0$};
\node at (9.5,1.5) {\small $0$};
\node at (10.5,1.5) {\small $0$};
\node at (8.5,2.5) {\small $0$};
\node at (9.5,2.5) {\small $0$};
\node at (10.5,2.5) {\small $1$};

\node [rotate = -90] (BwR) at (8.5,7) {\small $B(w_R)$};
\node [rotate = -90] (jw) at (9.5,7) {\small $\allones -B(w_R)-B(w_L)$};
\node [rotate = -90] (BwL) at (10.5,7) {\small $B(w_L)$};

\draw [<-] (8.5,3.1)--(BwR);
\draw [->] (BwR)--(8.5,10.9);

\draw [<-] (10.5,3.1)--(BwL);
\draw [->] (BwL)--(10.5,10.9);

\node (Bbl) at (4,0.5) {\small $B(b_L)$};
\draw [<-] (0.1,0.5)--(Bbl);
\draw [<-] (7.9,0.5)--(Bbl);

\node (jbrbl) at (4,1.5) {\small $\allones -B(b_R)-B(b_L)$};

\node (Bbr) at (4,2.5) {\small $B(b_R)$};
\draw [<-] (0.1,2.5)--(Bbr);
\draw [<-] (7.9,2.5)--(Bbr);

\end{tikzpicture}
\caption{$A'$ and $B'$ after a $3$-ladder insertion in a negative Lehman graph}
\label{fig:neg3ladder}
\end{center}
\end{figure}

\section{Biclique compression and expansion}
\label{completebipartite}

In this section we consider $r$-regular Lehman graphs whose vertex set can be partitioned into copies of $K_{r-1,r-1}$. By a slight abuse of terminology, we will refer to these copies of $K_{r-1,r-1}$ as the \emph{bicliques} of the graph.  (Normally, `biclique' refers to \emph{any} maximal induced complete bipartite subgraph not just those of a particular valency.) We start by showing that any $r$-regular Lehman graph with a partition into bicliques must have $k \in \{-1,1\}$, and moreover must have $r=3$ if $k=-1$. Then each biclique can be compressed to an edge, yielding a smaller Lehman graph that is still $r$-regular, but of the opposite sign. The edges of the smaller graph corresponding to the compressed bicliques form a perfect matching. We then consider the reverse ``expansion'' operation, whereby the edges of a perfect matching are \emph{expanded} into copies of $K_{r-1,r-1}$.  We show that \emph{any} perfect matching in a negative Lehman graph can be expanded to yield a larger Lehman graph with $k=1$. In contrast, if $G$ is a cubic Lehman graph with $k=1$, then the only perfect matching that can be expanded to yield a Lehman graph is the perfect matching of rungs (that is, the set of edges not in the auxiliary graph of $G$).

\subsection{Biclique compression}

First we describe the process of \emph{biclique compression}: Let $G$ be an $r$-regular bipartite graph, and let $\mathcal{P}$ be a partition of $V(G)$ into blocks each of which induces a copy of $K_{r-1,r-1}$.  Each black vertex, $b$, of $G$ is adjacent to the $r-1$ white vertices in its own block, and exactly one additional vertex in a different block; we call this vertex the \emph{out-neighbour} of $b$. Only Lehman graphs with certain parameters may possibly admit a partition into bicliques.

\begin{lemma}
\label{bicliqueconstraints}
Let $G$ be an $r$-regular Lehman graph with $r \geq 3$. If $G$ has a partition into copies of $K_{r-1,r-1}$, then either $k=-1$ and $r=3$, or $k=1$. 
\end{lemma}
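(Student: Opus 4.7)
The plan is to establish three things in sequence. First, I would show that for a black vertex $b\in B_i$ and any biclique $B_j$, the number $t_j:=|\Gamma(b)\cap B_j^W|$ of whites of $B_j$ in $b$'s mate lies in $\{0,1\}$. Second, I would apply Proposition~\ref{prop2} at $b$ itself to pin $k$ down to $\{-1,1\}$. Third, assuming $k=-1$, I would use the analogue of the first step on the white side (justified by Corollary~\ref{cor1}) together with Proposition~\ref{prop4} to produce a white vertex whose mate must contain all but one of the black vertices of $B_i$, and this will quickly force $r=3$.

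To carry out step one, write $\sigma(b')$ for the out-neighbour of a black vertex $b'$ and fix any $b'\in B_j$ with $b'\neq b$; such a $b'$ exists in $B_j$ because $|B_j^B|=r-1\geq 2$. Since $N(b')=B_j^W\cup\{\sigma(b')\}$ with $\sigma(b')\notin B_j^W$, Proposition~\ref{prop2} yields
\[
t_j+[\sigma(b')\in\Gamma(b)]=1,
\]
where $[\cdot]$ is the Iverson bracket; hence $t_j\in\{0,1\}$ for every biclique $B_j$ (including $B_i$). Step two is then immediate: applying Proposition~\ref{prop2} at $b$ itself yields $t_i+[\sigma(b)\in\Gamma(b)]=k+1$, and because the left-hand side lies in $\{0,1,2\}$ while $k\geq-1$ and $k\neq 0$, the only possibilities are $k=-1$ and $k=1$.

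For step three, assume $k=-1$. The equation at $b$ then forces $t_i=0$ and $\sigma(b)\notin\Gamma(b)$, and feeding this back into the displayed equation (for each $b'\in B_i^B\setminus\{b\}$) gives $\sigma(b')\in\Gamma(b)$. By Corollary~\ref{cor1} the whole argument may be repeated with the roles of the two colour classes swapped, yielding $|\Gamma(w)\cap B_j^B|\leq 1$ for every white vertex $w$ and biclique $B_j$. Now I would choose any $b'\in B_i^B$ and set $w:=\sigma(b')$; by Proposition~\ref{prop4}, for each $b\in B_i^B\setminus\{b'\}$ we have $b\in\Gamma(w)$ iff $w\in\Gamma(b)$, and the latter has just been established. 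Hence $B_i^B\setminus\{b'\}\subseteq\Gamma(w)$, which forces $|\Gamma(w)\cap B_i^B|\geq r-2$; combined with the bound $\leq 1$ this gives $r\leq 3$, so $r=3$ since $r\geq 3$ by hypothesis.

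The hard part will be largely notational bookkeeping, in particular tracking which mate is being intersected with which biclique and keeping straight the direction of the equivalence in Proposition~\ref{prop4}. The actual combinatorics is a single short pigeonhole, triggered the moment one identifies the right witness $w=\sigma(b')$ on the white side.
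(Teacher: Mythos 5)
Your proposal is correct and follows essentially the same route as the paper: both arguments bound the number of mate-vertices inside a block via the domination condition at a sibling vertex to force $k\in\{-1,1\}$, and in the $k=-1$ case both show that the out-neighbour of one block vertex lies in the mates of all its siblings, invoke Proposition~\ref{prop4} to transfer those siblings into the mate of that out-neighbour, and then contradict the single-domination of a white vertex of the block. Your $t_j$ bookkeeping and the pigeonhole $r-2\leq 1$ are just a more systematic packaging of the paper's ``dominated at least twice'' contradiction.
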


\begin{proof}
Suppose first that $k>0$, and consider the mate of a black vertex $b$. As $b$ has a unique out-neighbour, its mate $\mate{}{b}$ contains at least $k$ white vertices from the block containing $b$. But now all the black vertices in this block (of which there are at least two) are dominated at least $k$ times by $\mate{}{b}$ and so $k=1$.

Now suppose that $k=-1$ and that $r > 3$. Let $X$ be a block of $\mathcal{P}$, and let $b_{1}$, $b_{2}$, and $b_{3}$ be distinct black vertices in $X$. Let $w_{1}$ be the out-neighbour of $b_{1}$. Consider the mate, $\mate{}{b_2}$, of $b_{2}$. It cannot contain any white vertex of $X$, since $b_{2}$ is not adjacent with a vertex in its mate. But $b_{1}$ is adjacent with a vertex in $\mate{}{b_2}$, so $w_{1}\in \mate{}{b_2}$. Exactly the same argument shows that $w_{1}$ is in $\mate{}{b_3}$.

Now Proposition \ref{prop4} says that $b_{2}$ and $b_{3}$ are both in the mate of $w_{1}$. Therefore any white vertex in $X$ is adjacent to two vertices in this mate, and this is a contradiction.
\end{proof}

We now define a bipartite graph, $c(G,\mathcal{P})$, that will be the graph obtained by compressing each biclique to an edge. More formally, for each block $X\in \mathcal{P}$, the graph $c(G,\mathcal{P})$ contains a black vertex, $b_{X}$, and a white vertex, $w_{X}$. A black vertex $b_{X}$ is adjacent to a white vertex $w_{Y}$ if and only if $X=Y$, or there is a black vertex in $X$ adjacent to a white vertex in $Y$. (Alternatively, we can see this as the image of $G$ under the graph homomorphism that, for each block $X$, maps the white vertices of $X$ to $w_X$ and the black vertices of $X$ to $b_X$.) The edges $\{\{b_X,w_X\} : X \in \mathcal{P}\}$ form a \emph{perfect matching} in $c(G,\mathcal{P})$. 

\begin{proposition}
\label{posregular}
Let $G$ be an $r$-regular Lehman graph with $k=1$ and $r\geq 3$, and let $\mathcal{P}$ be a partition of $G$ into copies of $K_{r-1,r-1}$. Then $c(G,\mathcal{P})$ is $r$-regular.
\end{proposition}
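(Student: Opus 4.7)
My plan is to recast $r$-regularity of $c(G,\mathcal{P})$ as the assertion that for any two distinct blocks $X,Y\in\mathcal{P}$, the number $N_{XY}$ of edges of $G$ from $X_B$ to $Y_W$ is at most one. Granting this, each $b_X$ has degree $1+(r-1)=r$ in $c(G,\mathcal{P})$: the diagonal edge to $w_X$, together with one edge $b_X w_Y$ for each of the $r-1$ out-edges leaving $X_B$, which must then land in $r-1$ distinct blocks. Matching edge counts across the bipartition of $c(G,\mathcal{P})$ then forces the white side to be $r$-regular too.

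The workhorse is a structural lemma obtained by a short case analysis on $t:=|\Gamma(b)\cap Z_W|$ for a black vertex $b$ and a block $Z\in\mathcal{P}$. Since every white vertex of $Z_W$ is adjacent to every black vertex of $Z_B$, the domination of any $b''\in Z_B\setminus\{b\}$ by $\Gamma(b)$ equals $t+[\omega(b'')\in\Gamma(b)]$, where $\omega(\cdot)$ denotes the unique out-neighbour; this must equal $1$, and so $t\in\{0,1\}$. When $b$ lies in $Z_B$ itself, dominating $b$ twice forces $t=1$ and $\omega(b)\in\Gamma(b)$. When $b$ lies outside $Z_B$, the case $t=0$ forces $\omega(b'')\in\Gamma(b)$ for every $b''\in Z_B$. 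The analogous statements for mates of white vertices are obtained by swapping colours in the argument.

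Now suppose $N_{XY}\geq 2$. If two black vertices of $X_B$ shared the same out-neighbour $w\in Y_W$, then $w$ would have two neighbours outside $Y$, contradicting that a vertex of $Y_W$ has exactly one neighbour outside its own block. So there exist distinct $b,b'\in X_B$ with distinct out-neighbours $w,w'\in Y_W$. The white-vertex version of the structural lemma applied to $w$ produces a unique $b^Y\in\Gamma(w)\cap Y_B$; Proposition~\ref{prop4} converts this to $w\in\Gamma(b^Y)$, and the structural lemma at $b^Y\in Y_B$ then gives $\Gamma(b^Y)\cap Y_W=\{w\}$. Computing the domination of $b$ by $\Gamma(b^Y)$, which must equal $1$ because $b\neq b^Y$, gives
\[
1\;=\;|\Gamma(b^Y)\cap X_W|+[\omega(b)\in\Gamma(b^Y)]\;=\;|\Gamma(b^Y)\cap X_W|+1,
\]
so $|\Gamma(b^Y)\cap X_W|=0$. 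The $t=0$ branch of the structural lemma applied to $b^Y$ and the block $X$ then forces $\omega(b'')\in\Gamma(b^Y)$ for every $b''\in X_B$; in particular $w'=\omega(b')\in\Gamma(b^Y)$, contradicting $\Gamma(b^Y)\cap Y_W=\{w\}$.

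The main obstacle is formulating the structural lemma so that both the $t=1$ and $t=0$ alternatives are available, because the final contradiction is driven by the less frequently used $t=0$ branch applied to a black vertex \emph{outside} the block in question. Once the lemma and its white-vertex analogue are in hand, the rest of the argument is a single domination count.
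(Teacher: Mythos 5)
Your proof is correct and follows essentially the same route as the paper's: reduce to two out-edges from $X_B$ landing at distinct white vertices $w,w'$ of a single block $Y$, produce a mate of a black vertex other than $b$ that contains $w$, show this mate avoids $X_W$ and hence contains $w'$, and derive a contradiction from two vertices of $Y_W$ lying in one mate. The only real difference is how that mate is obtained --- you go through $\Gamma(w)$ and Proposition~\ref{prop4}, which also locates the vertex in $Y_B$, whereas the paper uses the counting fact that each white vertex lies in exactly $s>1$ mates and then picks an auxiliary vertex $b'\in Y_B$ for the final double-domination contradiction.
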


\begin{proof}
Certainly $c(G,\mathcal{P})$ is a bipartite graph with maximum degree at most $r$, and the number of white vertices is equal to the number of black vertices. Now it is easy to see that if $c(G,\mathcal{P})$ is not $r$-regular, then there is a block $X\in \mathcal{P}$, and distinct black vertices, $b_{1},b_{2}\in X$, such that $b_{1}$ and $b_{2}$ are adjacent with white vertices in the same block, $Y$. Assume that $b_{i}$ is adjacent with $w_{i}\in Y$ for $i=1,2$, and note that $w_{1}\ne w_{2}$.

Let $n$ be the number of black vertices of $G$. Then Theorem \ref{BridgesRyser} says that $rs=n+1$, where $s$ is the number of white vertices in each mate $\mate{}{b}$. Moreover, each white vertex is in exactly $s$ mates of black vertices. If $s=1$, then each black vertex is adjacent with $n+1$ white vertices, which is clearly impossible. Therefore $s>1$. Hence we can choose a set $\Gamma(b)$ such that $w_{1}\in \Gamma(b)$ and $b\ne b_{1}$. Now $\Gamma(b)$ does not contain a vertex in $X$, for otherwise $b_{1}$ would be adjacent with both that vertex, and with $w_{1}$, which is impossible as $b_{1}$ is not $b$. Now $b_{2}$ must be adjacent with at least one vertex in $\Gamma(b)$, so it follows that $w_{2}$ is in $\Gamma(b)$. We choose a black vertex $b'\in Y$ that is not $b$. Therefore $b'$ is adjacent with both $w_{1}$ and $w_{2}$, and these vertices are in $\Gamma(b)$, so we have a contradiction.
\end{proof}

Lemma~\ref{bicliqueconstraints} shows that only cubic graphs can occur in the analogous result for negative Lehman graphs. 

\begin{proposition}
\label{negregular}
Let $G$ be a cubic negative Lehman graph, and let $\mathcal{P}$ be a partition of $G$ into copies of $K_{2,2}$. If $G$ is not the graph produced from $K_{4,4}$ by removing a perfect matching, then $c(G,\mathcal{P})$ is cubic.
\end{proposition}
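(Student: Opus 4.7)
The plan is to argue by contradiction. First I would observe that every vertex of $c(G,\mathcal{P})$ has degree at most three, since each block $X$ contributes the internal edge $b_X w_X$ and each of its two black vertices contributes at most one external edge. So if $c(G,\mathcal{P})$ is not cubic then some vertex has degree at most two; using Corollary~\ref{cor1} to swap the roles of the two colour classes, I may assume the deficient vertex is some $b_X$. Then the two black vertices $b_1,b_2$ of $X$ have their out-neighbours in a common block $Y$, and since each white vertex of $Y$ has only one out-neighbour, these out-neighbours must be distinct: $b_1 \sim w_1$ and $b_2 \sim w_2$ for distinct $w_1,w_2 \in Y$. Let $w_1',w_2'$ be the white vertices of $X$ and $b_1'',b_2''$ the black vertices of $Y$.

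Next I would compute that $w_2 \in \mate{}{b_1}$: the mate must be disjoint from $N(b_1)=\{w_1',w_2',w_1\}$, and to dominate $b_2$ (whose neighbourhood is $\{w_1',w_2',w_2\}$) exactly once, we are forced to include $w_2$. The key step is then to establish that $\mate{}{b_1}=\{w_2\}$, which forces $s=1$ and hence $n=3s+1=4$.

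To prove this, I suppose for contradiction that some $w \in \mate{}{b_1}$ is distinct from $w_2$, noting that $w \notin \{w_1,w_2,w_1',w_2'\}$ (the first three being neighbours of $b_1$). By Proposition~\ref{prop4}, $b_1 \in \mate{}{w}$, and $\mate{}{w}$ must dominate every white vertex other than $w$ exactly once. Now $b_1$ already dominates $w_1,w_1',w_2'$, so no other neighbour of these white vertices may lie in $\mate{}{w}$; in particular $b_1'',b_2'' \notin \mate{}{w}$ (being neighbours of $w_1$) and $b_2 \notin \mate{}{w}$ (being a neighbour of $w_1'$). But $w_2 \neq w$, so $w_2$ must be dominated by $\mate{}{w}$ exactly once, while its black neighbourhood $N(w_2)=\{b_2,b_1'',b_2''\}$ has been entirely excluded. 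This contradiction establishes the claim.

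Once $\mate{}{b_1}=\{w_2\}$ is in hand we have $s=1$ and $n=4$, so $X$ and $Y$ together contain all eight vertices of $G$ and the remaining external edges (from $w_1',w_2',b_1'',b_2''$) must run between $X$ and $Y$; a short inspection of the forced degrees shows that the four non-edges of $G$ form a perfect matching, so $G$ is $K_{4,4}$ with a perfect matching removed---contradicting the hypothesis. The main obstacle is the claim that $\mate{}{b_1}=\{w_2\}$: the initial reduction to the configuration on $X\cup Y$ and the final identification of $G$ with $K_{4,4}$ minus a matching are both essentially bookkeeping, but the mate-of-$w$ argument that rules out $|\mate{}{b_1}|\geq 2$ is where the real work lies.
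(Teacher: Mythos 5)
Your proof is correct, and its overall skeleton matches the paper's: reduce to a block $X$ whose two black vertices send their out-neighbours into a common block $Y$, force $s=1$ (hence $n=4$), and identify $G$ as $K_{4,4}$ minus a perfect matching. Where you diverge is in how $s=1$ is forced. The paper uses the column-regularity of $B$ (each white vertex lies in exactly $s$ mates of black vertices): assuming $s>1$, it chooses a mate $\Gamma(b)$ with $w_1\in\Gamma(b)$ and $b\ne b_2$, deduces $X\cap\Gamma(b)=\emptyset$ and $w_2\in\Gamma(b)$, and then gets the contradiction that each black vertex of $Y$ is dominated twice. You instead fix $\Gamma(b_1)$, show it must contain $w_2$, and rule out a second element $w$ by passing to the white mate $\Gamma(w)$ via Proposition~\ref{prop4} and exhibiting $w_2$ as a vertex that cannot be dominated at all. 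Both mechanisms are sound; yours trades the ``each white vertex is in exactly $s$ mates'' fact for an invocation of mate symmetry, at the cost of being slightly longer (two mates are examined rather than one, and the contradiction is an undominated rather than a doubly-dominated vertex). One trivial slip in exposition: in the parenthetical claiming $w\notin\{w_1,w_2,w_1',w_2'\}$ because ``the first three'' are neighbours of $b_1$, the neighbours of $b_1$ are $w_1,w_1',w_2'$, not $w_1,w_2,w_1'$; the exclusion of $w_2$ comes from your hypothesis $w\ne w_2$, which is how you actually use it, so nothing breaks.
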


\begin{proof}
We again assume that there is a block $X\in \mathcal{P}$, and distinct black vertices, $b_{1},b_{2}\in X$, such that the out-neighbours of $b_{1}$ and $b_{2}$ are in the same block, $Y$. Let $w_{i}$ be the out-neighbour of $b_{i}$, for $i=1,2$. Let $n$ be the number of black vertices in $G$. Then $3s=n-1$, where $s=|\Gamma(b)|$ for any black vertex $b$. If $s>1$, then we choose a set $\Gamma(b)$ such that $w_{1}\in \Gamma({b})$ and $b\ne b_{2}$. Then $X\cap \Gamma(b)=\emptyset$, for otherwise $b_{1}$ is adjacent with two vertices in $\mate{}{b}$. Because $b_{2}\ne b$, we see that $b_{2}$ is adjacent with exactly one vertex in $\mate{}{b}$, so $w_{2}$ is in $\mate{}{b}$. Then any black vertex in $Y$ is adjacent with two vertices in $\mate{}{b}$. This contradiction means that $s=1$ and hence $n=4$. Thus $G$ is a cubic bipartite graph with eight vertices. It immediately follows that $G$ is isomorphic to the graph produced from $K_{4,4}$ by removing a perfect matching.
\end{proof}

Our next two lemmas, one for the positive case, and one for the negative case, show that biclique compression preserves the property of being a Lehman graph, but reverses the sign.

\begin{lemma}
\label{postoneglehman}
Let $G$ be an $r$-regular Lehman graph with $k=1$ and $r\geq 3$, and let $\mathcal{P}$ be a partition of $G$ into copies of $K_{r-1,r-1}$. Then $c(G,\mathcal{P})$ is an $r$-regular negative Lehman graph.
\end{lemma}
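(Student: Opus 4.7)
By Proposition \ref{posregular} the graph $c(G,\mathcal{P})$ is $r$-regular, so by Proposition \ref{prop2} it remains to produce, for each compressed black vertex $b_{X}$, a set $\Gamma'(b_{X})$ of compressed white vertices that avoids $N(b_{X})$ and dominates every other $b_{Z}$ exactly once. The plan is to construct $\Gamma'(b_{X})$ using the block-incidence structure of $G$.

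I would begin by extracting the structure of $\mate{G}{b}$ for a black vertex $b$ in a block $X$: applying the mate intersection condition of Proposition \ref{prop2} within the biclique on $X$ shows that $\mate{G}{b}$ meets every block in at most one vertex, meets $X\cap G_{W}$ in a unique vertex $w^{\ast}(b)$, and contains the out-neighbour $o(b)$. By Proposition \ref{prop4} and the analogous analysis for mates of white vertices, $b\mapsto w^{\ast}(b)$ is a bijection $X\cap G_{B}\to X\cap G_{W}$. Setting $T_{X}:=\{b\in G_{B}:\mate{G}{b}\cap(X\cap G_{W})\neq\emptyset\}$ and letting $\eta_{X}$ denote the indicator vector of $X\cap G_{W}$, these facts imply $X\cap G_{B}\subseteq T_{X}$ and $B\eta_{X}=\mathbf{1}_{T_{X}}$.

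The technical heart of the argument is the dichotomy
\[
m(Y,X)\ :=\ |(Y\cap G_{B})\cap T_{X}|\ \in\ \{r-2,\,r-1\}\quad\text{for every }Y\neq X.
\]
Applying Corollary \ref{cor1} in the form $A^{T}B\eta_{X}=(J+I)\eta_{X}$ gives $|N(w)\cap T_{X}|=(r-1)+\eta_{X}(w)$, and for $w\in Y\cap G_{W}$ with $Y\neq X$ this reduces to
\[
m(Y,X)+[i(w)\in T_{X}]=r-1,
\]
where $i(w)$ is the unique neighbour of $w$ outside $Y\cap G_{B}$. Since $m(Y,X)$ is independent of $w$, the dichotomy follows. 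A counting argument using $\sum_{Y\neq X}m(Y,X)=(r-1)(s-1)$ (itself derived from $\sum_{b\in G_{B}}|\mate{G}{b}\cap(X\cap G_{W})|=(r-1)s$ together with $m(X,X)=r-1$) then shows that exactly $s'=(s-1)/(r-1)$ blocks $Y\neq X$ satisfy $m(Y,X)=r-1$.

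I would define $\Gamma'(b_{X}):=\{w_{Y}:Y\neq X,\ m(Y,X)=r-1\}$ and verify the two mate conditions. For non-adjacency, any edge $b_{X}w_{Y}$ in $c(G,\mathcal{P})$ with $Y\neq X$ is witnessed by some $b\in X\cap G_{B}$ with $o(b)\in Y\cap G_{W}$; setting $w=o(b)$ yields $i(w)=b\in X\cap G_{B}\subseteq T_{X}$, forcing $m(Y,X)=r-2$. For unique domination of $b_{Z}$ with $Z\neq X$, I would split on $m(Z,X)$: if $m(Z,X)=r-1$ then $Z\cap G_{B}\subseteq T_{X}$ and each out-neighbour edge of $b_{Z}$ lands in a block with $m=r-2$, so $w_{Z}$ is the sole contribution to $\Gamma'(b_{X})\cap N(b_{Z})$; if $m(Z,X)=r-2$ then the unique $b^{\ast}\in(Z\cap G_{B})\setminus T_{X}$ picks out the unique out-neighbour block with $m=r-1$, and this block must differ from $X$ since $o(b^{\ast})\in\mate{G}{b^{\ast}}$ together with $b^{\ast}\notin T_{X}$ forces $o(b^{\ast})\notin X\cap G_{W}$. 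The principal obstacle is establishing the dichotomy on $m(Y,X)$; the essential trick is to apply Corollary \ref{cor1} rather than the defining identity $AB^{T}=J+I$, so that the block structure of $\mathcal{P}$ forces $B\eta_{X}$ to be a genuine $(0,1)$-vector. Once the dichotomy is in hand, the verification of the mate conditions is a purely combinatorial case analysis.
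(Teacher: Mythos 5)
Your proof is correct, but it reaches the mate of $b_X$ by a genuinely different route from the paper's. The paper's argument is local and constructive: it fixes one black vertex $b \in X$, takes the mate $\Gamma_G(w)$ of its out-neighbour $w$ (a set of \emph{black} vertices of $G$), observes that no block meets $\Gamma_G(w)$ twice, and declares the mate of $b_X$ to be $\{w_Z : Z \cap \Gamma_G(w) = \emptyset\}$; the verification is then a case analysis driven by the equivalence that, away from the exceptional pair $\{b,w\}$, a black vertex $b_i$ lies in $\Gamma_G(w)$ if and only if the block of its out-neighbour avoids $\Gamma_G(w)$. You instead work globally: you aggregate over all black vertices which mates meet the white side of $X$, and extract the mate of $b_X$ from the dichotomy $m(Y,X) \in \{r-2, r-1\}$, obtained by applying the identity $A^{T}B = J+I$ of Corollary~\ref{cor1} to the indicator vector of $X \cap G_W$ (after first checking that $B\eta_X$ is a $(0,1)$-vector). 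Since mates are unique the two sets necessarily coincide, but the derivations are independent. Your version buys the parameter count $s' = (s-1)/(r-1)$, and hence the divisibility $(r-1) \mid (s-1)$, essentially for free, and it avoids the bookkeeping around the exceptional pair $\{b,w\}$, which is the fiddliest part of the paper's argument; the paper's version is quicker to verify and transfers almost verbatim to the negative-to-positive direction of Lemma~\ref{negtoposlehman}, where your counting would need the $k=-1$ adjustments (in particular $o(b) \notin \Gamma_G(b)$ there). One small point to tighten: in the unique-domination check for $m(Z,X) = r-1$ you assert that every out-neighbour block of $b_Z$ has $m = r-2$, which is false when that block is $X$ itself (where $m = r-1$); the conclusion survives only because $w_X$ is excluded from $\Gamma'(b_X)$ by construction, and you should say so explicitly.
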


\begin{proof}
Let $b_{X}$ be an arbitrary black vertex in $c(G,\mathcal{P})$. We will prove the existence of a mate for $b_X$, namely a set of white vertices containing no neighbours of $b_X$ and exactly one neighbour of every other black vertex.

Let $b$ be a black vertex of $G$ in the block $X$, and $w$ be the out-neighbour of $b$ which (by definition) is contained in a block $Y$ distinct from $X$. Let $\Gamma_G(w)$ be the mate of $w$ in $G$; this is well-defined by Corollary~\ref{cor1}. So $\Gamma_G(w)$ is a set of black vertices containing two neighbours of $w$ and exactly one neighbour of every other white vertex in $G$. Note that no block of $\mathcal{P}$ contains more than one vertex of $\Gamma_{G}(w)$, for otherwise that block would contain a white vertex, not equal to $w$, that is dominated by two vertices in $\Gamma_{G}(w)$. In particular, $Y$ does not contain two vertices in $\Gamma_{G}(w)$. Therefore $b$ is in $\Gamma_{G}(w)$, along with exactly one vertex in $Y$.

We now describe a set of vertices $\Gamma$ in $c(G,\mathcal{P})$, and then show that it is a mate for $b_X$. The set $\Gamma$ is defined as follows: 
\[
\Gamma = \{ w_Z : Z \in \mathcal{P}, Z\cap \Gamma_G(w) =\emptyset \}.
\]
In other words, $\Gamma$ contains the white vertices of $c(G,\mathcal{P})$ corresponding to the blocks that contain no vertices of $\Gamma_G(w)$.

Now let $Z$ be a block of $\mathcal{P}$ and suppose that its black vertices are $b_1,b_2,\ldots,b_{r-1}$ with out-neighbours $w_1,w_2,\ldots,w_{r-1}$ that lie in blocks $Z_1,Z_2,\ldots,Z_{r-1}$ respectively (see Figure~\ref{fig:blocks}). The key observation is that unless $\{b_i, w_i\} = \{b,w\}$,
\begin{equation}
\label{keyobs}
Z_i \cap \Gamma_G(w) = \emptyset \text{ if and only if } b_i \in \Gamma_G(w).
\end{equation}
This follows from the fact that each white vertex other than $w$ is dominated by a unique black vertex in $\Gamma_G(w)$. Assume that $\{b_i,w_i\} \not= \{b,w\}$. In this case, if $b_i \in \Gamma_G(w)$ then $w_i$ is dominated by $b_i$ so none of the black vertices in $Z_i$ are in $\Gamma_G(w)$. On the other hand, if $b_i \notin \Gamma_G(w)$ then the vertex dominating $w_i$ must lie in $Z_i$ and so $Z_i$ does contain a vertex of $\Gamma_G(w)$. Note that when $\{b_i,w_i\} = \{b,w\}$, we have $Z = X$ and $Z_i = Y$. In this case, $Z_{i}$ contains an element of $\Gamma_{G}(w)$, since $Y$ contains a single vertex of $\Gamma_{G}(w)$, even though $b=b_{i}$ is also in $\Gamma_{G}(w)$.

\begin{figure}[htb]
\begin{center}
\begin{tikzpicture}
\tikzstyle{rowv}=[circle,fill=black,draw=black ,inner sep = 0.6mm]
\tikzstyle{colv}=[circle,fill=white,draw=black ,inner sep = 0.6mm]
\node [colv] (v0) at (0,-0.75) {};
\node [colv] (v1) at (0,0) {};
\node [colv] (v2) at (0,0.75) {};
\node [rowv] (b0) at (1,-0.75) {};
\node [rowv] (b1) at (1,0) {};
\node [rowv] (b2) at (1,0.75) {};
\node [below] at (b0) {\small $b_3$};
\node [above] at (b1) {\small $b_2$};
\node [above] at (b2) {\small $b_1$};
\draw (v0)--(b0);
\draw (v0)--(b1);
\draw (v0)--(b2);
\draw (v1)--(b0);
\draw (v1)--(b1);
\draw (v1)--(b2);
\draw (v2)--(b0);
\draw (v2)--(b1);
\draw (v2)--(b2);

\node [colv] (a0) at (3,-0.75) {};
\node [colv] (a1) at (3,0) {};
\node [colv] (a2) at (3,0.75) {};
\node [rowv] (a3) at (4,-0.75) {};
\node [rowv] (a4) at (4,0) {};
\node [rowv] (a5) at (4,0.75) {};

\draw (a0)--(a3);
\draw (a0)--(a4);
\draw (a0)--(a5);
\draw (a1)--(a3);
\draw (a1)--(a4);
\draw (a1)--(a5);
\draw (a2)--(a3);
\draw (a2)--(a4);
\draw (a2)--(a5);

\node [colv] (c0) at (2,1.25) {};
\node [colv] (c1) at (2,2) {};
\node [colv] (c2) at (2,2.75) {};
\node [rowv] (c3) at (3,1.25) {};
\node [rowv] (c4) at (3,2) {};
\node [rowv] (c5) at (3,2.75) {};

\draw (c0)--(c3);
\draw (c0)--(c4);
\draw (c0)--(c5);
\draw (c1)--(c3);
\draw (c1)--(c4);
\draw (c1)--(c5);
\draw (c2)--(c3);
\draw (c2)--(c4);
\draw (c2)--(c5);

\node [colv] (d0) at (2,-1.25) {};
\node [colv] (d1) at (2,-2) {};
\node [colv] (d2) at (2,-2.75) {};
\node [rowv] (d3) at (3,-1.25) {};
\node [rowv] (d4) at (3,-2) {};
\node [rowv] (d5) at (3,-2.75) {};

\draw (d0)--(d3);
\draw (d0)--(d4);
\draw (d0)--(d5);
\draw (d1)--(d3);
\draw (d1)--(d4);
\draw (d1)--(d5);
\draw (d2)--(d3);
\draw (d2)--(d4);
\draw (d2)--(d5);

\node  at (3.5,2) {$Z_1$};
\node  at (4.5,0) {$Z_2$};
\node  at (3.5,-2) {$Z_3$};
\node  at (-0.5,0) {$Z$};

\draw (b2) -- (c0);
\draw (b1) -- (a1);
\draw (b0) -- (d0);

\node [above left] at (c0) {\small $w_1$};
\node [above left] at (a1) {\small $w_2$};
\node [above] at (d0) {\small $w_3$};

\end{tikzpicture}
\end{center}
\caption{Configuration in biclique compression}
\label{fig:blocks}
\end{figure}
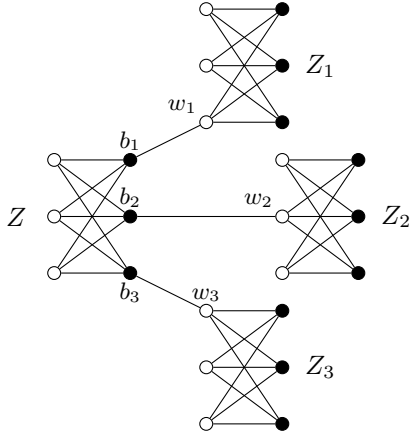

In the compressed graph $c(G,\mathcal{P})$, the neighbours of $b_Z$ are $w_{Z_1},w_{Z_2},\ldots,w_{Z_{r-1}}$ along with $w_Z$, so we must count how many of these vertices are in $\Gamma$. First consider the case where $Z \neq X$. In this case, \eqref{keyobs} implies that if $Z \cap \Gamma_G(w) = \emptyset$, then $w_Z \in \Gamma$, but none of $w_{Z_1},w_{Z_2},\ldots,w_{Z_{r-1}}$ are in $\Gamma$ and so $b_Z$ is adjacent to exactly one vertex in $\Gamma$. On the other hand, if $Z \cap \Gamma_G(w) = \{b_i\}$, then $w_Z$ is not in $\Gamma$. On the other hand, $Z_{i}$ contains no vertex in $\Gamma_{G}(w)$ by \eqref{keyobs}. Therefore $w_{Z_{i}}$ is in $\Gamma$. Because $Z$ contains exactly one vertex in $\Gamma_{G}(w)$, no vertex $b_{j}$ is in $\Gamma_{G}(w)$ when $j\ne i$. Therefore $Z_{j}\cap \Gamma_{G}(w)$ is non-empty by \eqref{keyobs}, so $w_{Z_{j}}$ is not in $\Gamma$. Therefore exactly one neighbour of $b_{Z}$ is in $\Gamma$.

Finally we consider the case that $Z = X$. We assume $b_i = b$, in which case $Z_i = Y$. As both $X$ and $Y$ contain a vertex of $\Gamma_G(w)$, it follows that $w_X,w_Y \notin \Gamma$. Since $Z$ contains a unique vertex of $\Gamma_{G}(w)$, we see that $b_{j}$ is not in $\Gamma_{G}(w)$ when $j\ne i$. Therefore \eqref{keyobs} implies that $Z_{j}$ contains an vertex of $\Gamma_G(w)$. This means that none of the white vertices $w_{Z_{1}},w_{Z_{2}},\ldots, w_{Z_{r-1}}$ are in $\Gamma$. Therefore $\Gamma$ dominates every black vertex of $c(G,\mathcal{P})$ other than $b_X$ exactly once each and does not dominate $b_X$ at all, and so is a mate for $b_X$. As every black vertex of $c(G,\mathcal{P})$ has a mate, and as each mate fails to dominate a unique vertex, it follows that $c(G,\mathcal{P})$ is a negative Lehman graph. 
\end{proof}

\begin{lemma}
\label{negtoposlehman}
Let $G$ be a cubic negative Lehman graph, and let $\mathcal{P}$ be a partition of $G$ into copies of $K_{2,2}$. Then $c(G,\mathcal{P})$ is a cubic Lehman graph with $k=1$.
\end{lemma}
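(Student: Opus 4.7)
The plan is to mirror the proof of Lemma~\ref{postoneglehman} closely, adjusting only for the sign change from $k = -1$ to $k = +1$. Cubicity of $c(G,\mathcal{P})$ comes for free from Proposition~\ref{negregular}, provided $G$ is not $K_{4,4}$ minus a perfect matching; since the conclusion of the lemma would be absurd in that exceptional case (a cubic Lehman graph with $k=1$ on only two black vertices does not exist), I would explicitly note this implicit hypothesis at the start of the proof.

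To establish the Lehman property I will construct, for each black vertex $b_X$ of $c(G,\mathcal{P})$, a mate $\Gamma$ of white vertices. Pick any black vertex $b\in X$, let $w$ be its out-neighbour in $G$ (a white vertex in a block $Y\ne X$), and let $\Gamma_G(w)$ be the mate of $w$ in the negative Lehman graph $G$, that is, a set of $s$ black vertices dominating no neighbour of $w$ and every other white vertex exactly once. Setting $f(Z)=|Z\cap \Gamma_G(w)|$ for each block $Z\in\mathcal{P}$, I define $\Gamma = \{w_Z : f(Z)=0\}$. Two easy preliminary facts drive the count: no block has $f\ge 2$ (else a white in that block would be twice-dominated by $\Gamma_G(w)$, contradicting $k=-1$), and $f(Y)=0$ (every black in $Y$ is a neighbour of $w$, hence forbidden from $\Gamma_G(w)$). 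Consequently $f(Z)\in\{0,1\}$, with exactly $s$ blocks of $f=1$ and $(s+1)/2$ blocks of $f=0$; the latter count matches the required mate size for a cubic Lehman graph on $|\mathcal{P}|=(3s+1)/2$ black vertices with $k=1$, namely $s_c=(s+1)/2$.

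The hard part will be verifying the domination counts, for which the key observation---the negative analogue of equation~\eqref{keyobs}---is that for any black vertex $b'$ in a block $Z$ whose out-neighbour $w'$ is distinct from $w$, one has $b'\in \Gamma_G(w) \iff f(\text{block of }w')=0$, since $w'$ is dominated exactly once by $\Gamma_G(w)$. The exception $w'=w$ forces $b'=b$, and occurs at a single pair $(b,w)$; this is precisely what creates the asymmetric behaviour at block $X$. With the observation in hand I would split on block type: at $X$, the exception at $b$ combined with the observation applied to the other black of $X$ shows that $w_Y\in\Gamma$ together with exactly one of $w_X$ and $w_{Y'}$ (where $Y'$ is the block of the other out-neighbour from $X$), so $b_X$ is dominated twice; at $Y$, only $w_Y$ lies in $\Gamma$, because both blacks of $Y$ miss $\Gamma_G(w)$ and the observation applied to each of them forces $f=1$ at the blocks of their out-neighbours; and at every other $Z$, a short case split on $f(Z)\in\{0,1\}$ again yields exactly one neighbour of $b_Z$ in $\Gamma$. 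Hence every black vertex of $c(G,\mathcal{P})$ has a mate, so $c(G,\mathcal{P})$ is a Lehman graph, and the parameters $(|\mathcal{P}|,3,(s+1)/2)$ give $k=3(s+1)/2-(3s+1)/2=1$ as required.
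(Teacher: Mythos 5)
Your proposal is correct and follows essentially the same route as the paper's proof: pick $b\in X$ with out-neighbour $w\in Y$, take the mate $\Gamma_G(w)$ in $G$, let $\Gamma$ consist of the vertices $w_Z$ for blocks $Z$ disjoint from $\Gamma_G(w)$, and verify the domination counts block by block via the negative analogue of the key observation. Your explicit remark that $K_{4,4}$ minus a perfect matching must be excluded (so that Proposition~\ref{negregular} yields cubicity) is a small but genuine tightening of the statement.

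The one place where you diverge from the paper is your treatment of the block $X$, and there your version is the correct one: the paper's argument contains an error at exactly this point. The paper asserts that neither $X$ nor $Y$ contains a vertex of $\Gamma_G(w)$, and concludes that $b_X$ is dominated by $w_X$ and $w_Y$. In fact the second black vertex $b'$ of $X$ always lies in $\Gamma_G(w)$: the mate $\Gamma_G(b')$ must avoid $N(b')$, which contains both white vertices of $X$, so in order to dominate $b$ exactly once it must contain $w$, and then $b'\in\Gamma_G(w)$ by Proposition~\ref{prop4}. Hence $w_X\notin\Gamma$, and $b_X$ is in fact dominated by $w_Y$ and $w_{Y'}$. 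Your formulation --- $w_Y\in\Gamma$ together with exactly one of $w_X$ and $w_{Y'}$ --- delivers the required count of two without committing to the false claim, so the lemma survives and your version of the block-$X$ analysis is the one that should stand.
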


\begin{proof}
This proof is almost identical to that of Lemma~\ref{postoneglehman}. Once again, we select a block $X$, and then construct a set of vertices that will be a mate for $b_X$.   As before, fix a vertex $b \in X$, and consider its out-neighbour $w$ in a block $Y$. Then take the mate $\Gamma_G(w)$. Note that no block contains more than one vertex of $\Gamma_{G}(w)$. Define a set $\Gamma$ of vertices of $c(G,\mathcal{P})$ by
\[
\Gamma = \{ w_Z : Z \in \mathcal{P}, Z\cap \Gamma_G(w) =\emptyset \}.
\]

As $\Gamma_G(w)$ does not contain any neighbours of $w$, it does not contain $b$ nor any of the vertices of $Y$. We need an analogue of \eqref{keyobs}, so suppose that $Z$ is a block containing black vertices $b_1$, $b_2$ with out-neighbours, $w_1$ and $w_2$ respectively, that lie in blocks $Z_1$ and $Z_2$ respectively. Then unless $\{b_i,w_i\} = \{b,w\}$, 
\begin{equation}
Z_i \cap \Gamma_G(w) = \emptyset \text{ if and only if } b_i \in \Gamma_G(w).
\end{equation}
In the exceptional case, when $Z = X$ and $Z_i = Y$, neither $X$ nor $Y$ contain any vertices of $\Gamma_G(w)$.

Now we determine which neighbours of $b_Z$ are in $\Gamma$, considering first the case that $Z \not= X$. If $Z \cap \Gamma_G(w) = \emptyset$, then $w_Z \in \Gamma$, but neither of $w_{Z_1}$, $w_{Z_2} \in \Gamma$, and so $b_Z$ has a unique neighbour. If $Z \cap \Gamma_G(w) = \{b_i\}$ for some $i$, then $w_{Z_i}$ is the unique neighbour of $b_Z$ in $\Gamma$.

Finally, both $X$ and $Y$ contain no vertices of $\Gamma_G(w)$ and hence $b_X$ is adjacent to both $w_X$ and $w_Y$, thereby being twice dominated as required.
\end{proof}

\subsection{Biclique expansion}

Next, we show how to reverse the construction discussed in the previous results in this section, that is, we determine when it is possible to replace each edge in a perfect matching with a biclique while preserving the property of being a Lehman graph.

Let $G$ be an $r$-regular bipartite graph with black vertices $b_{1},\ldots, b_{n}$, and white vertices $w_{1},\ldots, w_{n}$. Assume that $\mathcal{M}=\{b_{i}w_{i}\colon 1\leq i\leq n\}$ is a perfect matching of $G$. Now we define the bipartite graph $e(G,\mathcal{M})$, which will be the \emph{biclique-expansion} of $G$. For each  black vertex $b_i$ of $G$ there is a set $B_i$ of $r-1$ black vertices, and for each white vertex $w_j$, a set $W_j$ of $r-1$ white vertices. For each $i$, join every vertex of $B_i$ to every vertex of $W_i$ so that the subgraphs induced by $B_i \cup W_i$ are all bicliques. Now, for every edge not in $\mathcal{M}$, say $\{b_i,w_j\}$ where $i\not= j$, we add a single edge between $B_i$ and $W_j$ in such a way that $e(G,\mathcal{M})$ is $r$-regular. To see that this can always be done, consider the following procedure: for each black vertex $b$, arbitrarily order the edges of $G$ that are incident with $b$, but not in $\mathcal{M}$. Similarly, arbitrarily order the non-matching edges incident with each white vertex. Now consider an edge $e = \{b,w\}$ which is a non-matching edge of $G$, with corresponding blocks $B$, $W$. If $e$ is the $i$th edge in the ordering for $b$ and the $j$th edge in the ordering for $w$, then join the $i$th vertex of $B$ to the $j$th vertex of $W$. Then $e(G,\mathcal{M})$ is $r$-regular as the $r-1$ edges of $G$ incident with a given vertex $b$ correspond to $r-1$ edges of $e(G,\mathcal{M})$ each using a different vertex of $B$. Moreover, as the vertices of each colour within a biclique can be permuted arbitrarily without altering the isomorphism class of the whole graph, every choice of the edge-ordering at each vertex gives an isomorphic graph. 

This construction is illustrated in Figure \ref{expandingmatching}. The orderings of the edges at each vertex are given in the natural left-to-right order; for example, the two non-matching edges incident with $b_2$ are ordered $(\{b_2,w_1\}, \{b_2,w_4\})$, while the two non-matching edges incident with $w_4$ are ordered $(\{b_2,w_4\}, \{b_3,w_4\})$. So the edge between $B_2$ and $W_4$ connects the second vertex of $B_2$ to the first vertex of $W_4$.

\begin{figure}[htb]

\begin{center}

\begin{tikzpicture}

\tikzstyle{rowvertex}=[circle,fill=black,draw=black ,inner sep = 0.8mm]
\tikzstyle{colvertex}=[circle,fill=white,draw=black ,inner sep = 0.8mm]

\begin{scope}[xshift=0cm]

\node [rowvertex] (b1) at (0,2.5) {};
\node [rowvertex] (b2) at (1,2.5) {};
\node [rowvertex] (b3) at (2,2.5) {};
\node [rowvertex] (b4) at (3,2.5) {};

\node [colvertex] (w1) at (0,0) {};
\node [colvertex] (w2) at (1,0) {};
\node [colvertex] (w3) at (2,0) {};
\node [colvertex] (w4) at (3,0) {};

\draw (b1)--(w1);
\draw (b1)--(w2);
\draw (b1)--(w3);

\draw (b2)--(w1);
\draw (b2)--(w2);
\draw (b2)--(w4);

\draw (b3)--(w1);
\draw (b3)--(w3);
\draw (b3)--(w4);

\draw (b4)--(w2);
\draw (b4)--(w3);
\draw (b4)--(w4);

\node [above] at (b1.north) {$b_1$};
\node [above] at (b2.north) {$b_2$};
\node [above] at (b3.north) {$b_3$};
\node [above] at (b4.north) {$b_4$};

\node [below] at (w1.south) {$w_1$};
\node [below] at (w2.south) {$w_2$};
\node [below] at (w3.south) {$w_3$};
\node [below] at (w4.south) {$w_4$};

\end{scope}

\begin{scope}[xshift=5cm]

\node [rowvertex] (b11) at (0,2.5) {};
\node [rowvertex] (b12) at (0.75,2.5) {};
\node [rowvertex] (b21) at (1.75,2.5) {};
\node [rowvertex] (b22) at (2.5,2.5) {};
\node [rowvertex] (b31) at (3.5,2.5) {};
\node [rowvertex] (b32) at (4.25,2.5) {};
\node [rowvertex] (b41) at (5.25,2.5) {};
\node [rowvertex] (b42) at (6,2.5) {};

\node [colvertex] (w11) at (0,0) {};
\node [colvertex] (w12) at (0.75,0) {};
\node [colvertex] (w21) at (1.75,0) {};
\node [colvertex] (w22) at (2.5,0) {};
\node [colvertex] (w31) at (3.5,0) {};
\node [colvertex] (w32) at (4.25,0) {};
\node [colvertex] (w41) at (5.25,0) {};
\node [colvertex] (w42) at (6,0) {};

\draw[rounded corners] (w11) ++ (-0.25,-0.4) rectangle + (1.25,0.8) node[midway,below=0.5cm]{$W_{1}$};
\draw[rounded corners] (w21) ++ (-0.25,-0.4) rectangle + (1.25,0.8) node[midway,below=0.5cm]{$W_{2}$};
\draw[rounded corners] (w31) ++ (-0.25,-0.4) rectangle + (1.25,0.8) node[midway,below=0.5cm]{$W_{3}$};
\draw[rounded corners] (w41) ++ (-0.25,-0.4) rectangle + (1.25,0.8) node[midway,below=0.5cm]{$W_{4}$};

\draw[rounded corners] (b11) ++ (-0.25,-0.4) rectangle + (1.25,0.8) node[midway,above=0.5cm]{$B_{1}$};
\draw[rounded corners] (b21) ++ (-0.25,-0.4) rectangle + (1.25,0.8) node[midway,above=0.5cm]{$B_{2}$};
\draw[rounded corners] (b31) ++ (-0.25,-0.4) rectangle + (1.25,0.8) node[midway,above=0.5cm]{$B_{3}$};
\draw[rounded corners] (b41) ++ (-0.25,-0.4) rectangle + (1.25,0.8) node[midway,above=0.5cm]{$B_{4}$};

\draw[gray] (b11) -- (w11) -- (b12) -- (w12) -- (b11);
\draw[gray] (b21) -- (w21) -- (b22) -- (w22) -- (b21);
\draw[gray] (b31) -- (w31) -- (b32) -- (w32) -- (b31);
\draw[gray] (b41) -- (w41) -- (b42) -- (w42) -- (b41);

\draw[thick] (b11) -- (w21);
\draw[thick] (b12) -- (w31);

\draw[thick] (b21) -- (w11);
\draw[thick] (b22) -- (w41);

\draw[thick] (b31) -- (w12);
\draw[thick] (b32) -- (w42);

\draw[thick] (b41) -- (w22);
\draw[thick] (b42) -- (w32);

\end{scope}

\end{tikzpicture}

\end{center}

\caption{The $r$-regular bipartite graph, $G$, and $e(G,\mathcal{M})$.}

\label{expandingmatching}

\end{figure}
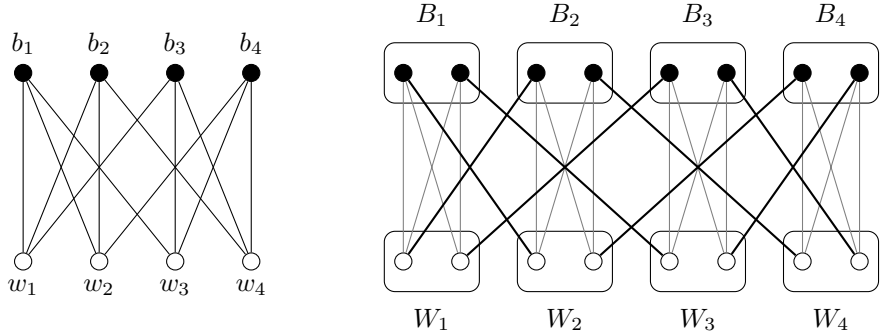

\begin{lemma}
\label{negperfectmatching}
Let $\mathcal{M}$ be a perfect matching of the $r$-regular negative Lehman graph $G$. Then $e(G,\mathcal{M})$ is an $r$-regular Lehman graph with $k=1$.
\end{lemma}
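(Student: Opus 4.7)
The plan is to prove the lemma by exhibiting, for every black vertex of $H=e(G,\mathcal{M})$, an explicit mate built from the $G$-mates. It will be convenient to label vertices: write each vertex of $B_i$ as $b_i^{(k)}$, where $k$ ranges over $N(b_i)\setminus\{i\}$, so that $b_i^{(k)}$ is the unique vertex of $B_i$ joined by a non-biclique edge to the unique vertex $w_k^{(i)}\in W_k$ corresponding to the $G$-edge $b_iw_k$; label $W_j$ analogously. Then $N_H(b_i^{(k)})=W_i\cup\{w_k^{(i)}\}$. Parameter counting gives that an $r$-regular Lehman graph with $k=1$ on $n(r-1)$ black vertices has mate-size $s'=n-s$, since $rs'=n(r-1)+1=r(n-s)$ using $rs=n-1$.

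For $b_i^{(k)}$ I will propose the mate
\[
\mate{H}{b_i^{(k)}}=\{w_k^{(i)}\}\cup\bigcup_{b_m\in\mate{G}{w_k}}\bigl\{w_j^{(m)}:j\in N(b_m)\setminus\{m\}\bigr\}.
\]
In words, take the ``twin'' vertex $w_k^{(i)}$, together with, for each black vertex $b_m$ of the $G$-mate of $w_k$, the $r-1$ white vertices of $H$ obtained from the non-matching edges at $b_m$. The first task is to check well-definedness: the listed vertices are distinct (using Proposition~\ref{prop4} plus the observation that $b_i\notin\mate{G}{w_k}$, because $w_k\in N(b_i)$ forces $w_k\notin\mate{G}{b_i}$), and the cardinality is $1+s(r-1)=n-s$, as required.

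The heart of the argument is verifying the intersection condition $\bigl|\mate{H}{b_i^{(k)}}\cap N_H(b_{i'}^{(k')})\bigr|=1+\mathbf{1}_{(i',k')=(i,k)}$ for every black vertex $b_{i'}^{(k')}$. Splitting the neighbourhood as $W_{i'}\cup\{w_{k'}^{(i')}\}$, the key identity is that $w_{i'}^{(m)}\in\mate{H}{b_i^{(k)}}$ if and only if $b_m\in\mate{G}{w_k}\cap(N(w_{i'})\setminus\{b_{i'}\})$. Since $G$ is a negative Lehman graph, $|\mate{G}{w_k}\cap N(w_{i'})|$ equals $0$ when $i'=k$ and $1$ otherwise, and the case analysis then splits on whether $b_{i'}$ itself belongs to $\mate{G}{w_k}$. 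When $b_{i'}\notin\mate{G}{w_k}$, the $W_{i'}$-part contributes exactly one vertex, and $w_{k'}^{(i')}$ is absent from the mate; when $b_{i'}\in\mate{G}{w_k}$, the $W_{i'}$-part contributes nothing, but $w_{k'}^{(i')}$ appears precisely through the $b_m=b_{i'}$ term of the union. Combining, each case gives exactly one domination, except when $(i',k')=(i,k)$, where both $w_k^{(i)}\in N_H(b_i^{(k)})$ and the unique $W_i$-contribution dominate, giving two.

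The main obstacle is the bookkeeping in this case analysis, especially the edge case $i'=k$ (where the ``target block'' coincides with the ``out-direction'' of $b_i^{(k)}$) and the handling of $b_{i'}\in\mate{G}{w_k}$. An equivalent and arguably cleaner route is to do the verification at the matrix level: write $A_H\bigl[b_i^{(k)},w_j^{(m)}\bigr]=\mathbf{1}_{i=j}+\mathbf{1}_{j=k}\mathbf{1}_{m=i}$ and $B_H\bigl[b_i^{(k)},w_j^{(m)}\bigr]=\mathbf{1}_{j=k,\,m=i}+(B_G)_{m,k}(A_G)_{m,j}\mathbf{1}_{j\ne m}$ where $B_G$ is the Lehman partner of $A_G$, expand $(A_HB_H^{T})_{(i,k),(i',k')}$ into four indicator-sums, and collapse them using $A_GB_G^{T}=J-I$; the three cross terms sum to $1$ and the fourth contributes $\mathbf{1}_{(i,k)=(i',k')}$, yielding $A_HB_H^{T}=J+I$. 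Either way, the existence of a mate for every black vertex is established, so $H$ is Lehman with $k=1$.
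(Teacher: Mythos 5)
Your proposal is correct and takes essentially the same route as the paper: for a black vertex of $B_i$ with out-neighbour in $W_k$, you build its mate from $\Gamma_G(w_k)$ by taking the out-neighbours of the blocks $B_m$ with $b_m\in\Gamma_G(w_k)$ and adjoining the twin vertex $w_k^{(i)}$, which is exactly the paper's construction, and you then verify the domination counts block by block as the paper does. The superscript labelling and the matrix reformulation are just more explicit packaging of the same argument.
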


\begin{proof}
Assume that $G$ has black vertices $b_1,b_2,\ldots,b_n$ and white vertices $w_1,w_2,\ldots,w_n$ and that $\mathcal{M} = \{b_{i}w_{i}\colon 1\leq i\leq n\}$. As above, for each $i$, let $B_i$ denote the set of $(r-1)$ black vertices associated with $b_i$ and $W_i$ the set of white vertices  associated with $w_i$.  This is illustrated in Figure~\ref{fig:mate1}, where the complete connection between $B_i$ and $W_i$ is represented by a zigzag line.

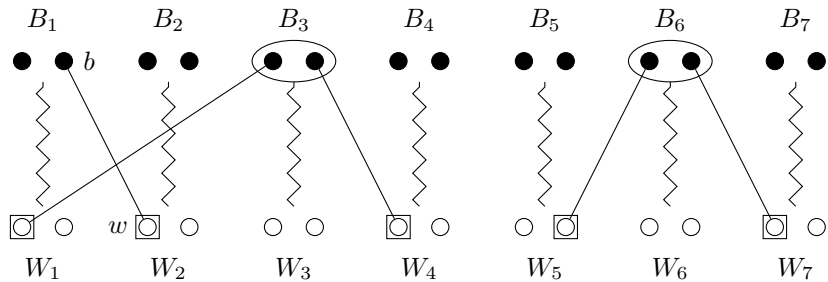
\begin{figure}[htb]
\begin{center}
\begin{tikzpicture}[scale=1.1]
\tikzstyle{rowvertex}=[circle,fill=black,draw=black ,inner sep = 0.8mm]
\tikzstyle{colvertex}=[circle,fill=white,draw=black ,inner sep = 0.8mm]

\foreach \x in {0,1, 3,4, 6, 7, 9, 10, 12, 13, 15, 16, 18, 19} {
\node [rowvertex] (b\x) at (\x/2,2) {};
\node [colvertex] (w\x) at (\x/2,0) {};
}

\foreach \x in {0,3,9,13,18}{
\draw (\x/2-0.14,-0.14) rectangle ++(0.28,0.28);
}

\foreach \y in {0.25, 1.75, 3.25, 4.75, 6.25, 7.75, 9.25} {
\draw decorate [decoration={zigzag}] {(\y,0.25) -- (\y,1.75)};

}

\node at (0.25,2.5) {$B_1$};
\node at (1.75,2.5) {$B_2$};
\node at (3.25,2.5) {$B_3$};
\node at (4.75,2.5) {$B_4$};
\node at (6.25,2.5) {$B_5$};
\node at (7.75,2.5) {$B_6$};
\node at (9.25,2.5) {$B_7$};

\node at (0.25,-0.5) {$W_1$};
\node at (1.75,-0.5) {$W_2$};
\node at (3.25,-0.5) {$W_3$};
\node at (4.75,-0.5) {$W_4$};
\node at (6.25,-0.5) {$W_5$};
\node at (7.75,-0.5) {$W_6$};
\node at (9.25,-0.5) {$W_7$};

\node[right, outer sep = 4pt] at (b1) {$b$};
\node[left, outer sep = 4pt] at (w3) {$w$};

\draw (b1)--(w3);

\draw (7.75,2) ellipse (0.5cm and 0.25cm) {};
\draw (3.25,2) ellipse (0.5cm and 0.25cm) {};

\draw (b15)--(w13);
\draw (b16)--(w18);

\draw (b6)--(w0);
\draw (b7)--(w9);

\end{tikzpicture}
\end{center}
\caption{Finding the mate of $b$ in $e(G,\mathcal{M})$ when $G$ is negative}
\label{fig:mate1}
\end{figure}

Now let $b$ be an arbitrary black vertex of $e(G,\mathcal{M})$. We will show that there is a set $\Gamma$ of white vertices of $e(G,\mathcal{M})$ that is a mate for $b$. Let $w$ be the out-neighbour of $b$ and assume that $b \in B_i$ and $w \in W_j$, for some $i \not= j$. In Figure~\ref{fig:mate1} we have taken $i=1$ and $j=2$ but this is purely for illustrative purposes, and the argument only requires that $i \not= j$.

The vertex $w_j$ in $G$ has a mate $\Gamma_G(w_j)$, which is a set of black vertices of $G$ that dominates every white vertex exactly once, except for $w_j$, which is not dominated at all. Now from this mate, define a set $\Gamma$ of white vertices of $e(G,\mathcal{M})$ by taking the out-neighbours of the vertices in any set $B_\ell$ such that $b_\ell \in \Gamma_G(w_j)$ and adding the vertex $w$. In Figure~\ref{fig:mate1}, the marked sets $B_3$ and $B_6$ correspond to the vertices in $\Gamma_G(w_j)$, and so it is their out-neighbours, together with $w$, that form the purported mate of $b$. Note that no white vertex of $G$ is dominated twice by $\Gamma_{G}(w_{j})$, which means that no set $W_{\ell}$ contains more than one vertex of $\Gamma$.

There are three types of black set, namely the set $B_i$ containing $b$, the sets corresponding to the vertices in $\Gamma_G(w_j)$ (that is, $B_3$ and $B_6$ in Figure~\ref{fig:mate1}) and the remaining sets. Note that $w_{i}\ne w_{j}$, so $w_{i}$ is adjacent to exactly one vertex in $\Gamma_{G}(w_{j})$, say $b_{\ell}$. Furthermore, $\ell\ne i$, for otherwise $w_{j}$ is dominated by a vertex in $\Gamma_{G}(w_{j})$, which is impossible. Therefore $W_{i}$ contains an out-neighbour of a vertex in $B_{\ell}$, so $W_{i}$ contains a vertex in $\Gamma$. All of the black vertices in $B_i$ are dominated by the vertex in $W_i\cap \Gamma$. In addition, $b$ (alone) is dominated a second time by $w$. Let $\ell$ be different from $i$. Assume that $b_\ell$ is in $\Gamma_G(w_j)$. If $W_\ell$ contains a vertex in $\Gamma$, then that vertex is an out-neighbour of a set that corresponds to a member of $\Gamma_{G}(w_{j})$. But this would mean that $w_{\ell}$ is dominated by two vertices in $\Gamma_{G}(w_{j})$, an impossibility. Therefore $W_{\ell}\cap \Gamma=\emptyset$, but each vertex of $B_\ell$ is dominated by its unique out-neighbour. On the other hand, if $b_\ell \notin \Gamma_G(w_j)$, then $\Gamma$ does contain exactly one vertex of $W_\ell$, but does not contain the out-neighbour of any of the vertices in $B_\ell$, so every vertex of $B_\ell$ is dominated exactly once by the sole vertex in $W_i \cap \Gamma$.

Therefore, every black vertex of $e(G,\mathcal{M})$ is dominated once except for $b$, which is dominated twice, and so $\Gamma$ is a mate for $b$. As a mate can be found for every black vertex of $e(G,\mathcal{M})$, it follows that it is a Lehman graph with $k=1$.
\end{proof}

When we expand a Lehman graph satisfying $k=1$, we must restrict to the cubic case, by Proposition \ref{bicliqueconstraints}. In addition, not every perfect matching can be expanded, but only the one formed from the edges not in the auxiliary graph, which we previously termed the \emph{rungs}. 

\begin{lemma}
\label{posperfectmatching}
Let $G$ be a cubic Lehman graph with $k=1$, and let $\mathcal{R}$ be the perfect matching of rungs. Then $e(G,\mathcal{R})$ is a cubic negative Lehman graph.
\end{lemma}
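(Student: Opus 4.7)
The plan is to mirror the proof of Lemma~\ref{negperfectmatching}, producing an explicit mate in $H := e(G,\mathcal{R})$ for each black vertex. By relabelling the columns of the bipartite adjacency matrix one may assume that the rungs of $G$ are $\{b_iw_i : 1 \le i \le n\}$, so that the partner $B$ of the Lehman pair $(A,B)$ for $G$ satisfies $B_{ii}=0$ (rungs lie in $A$ but not in $B$), and the $i$-th biclique of $H$ is $B_i \cup W_i$ with $B_i = \{b_i^1,b_i^2\}$ and $W_i = \{w_i^1,w_i^2\}$. Fix a black vertex $b \in B_i$ of $H$ and let $w$ be its unique external neighbour, lying in some $W_j$ with $j \ne i$. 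Because external edges of $H$ correspond precisely to non-rung edges of $G$, the edge $b_iw_j$ must lie in the auxiliary graph of $G$, so $B_{ij}=1$ and hence $b_i \in \Gamma_G(w_j)$.

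For each $\ell$ write $\mathrm{ext}(B_\ell)$ for the set of external neighbours of the two vertices in $B_\ell$; these are pairwise disjoint two-element subsets of the white vertex set of $H$. The candidate mate is
\[
\Gamma^{*} := \Bigl(\bigcup_{b_\ell \in \Gamma_G(w_j)} \mathrm{ext}(B_\ell)\Bigr) \setminus \{w\}.
\]
Since $b_i \in \Gamma_G(w_j)$, the vertex $w \in \mathrm{ext}(B_i)$ lies in the union, so $|\Gamma^{*}| = 2s - 1$, which is the expected mate size for a cubic negative Lehman graph on $2n$ black vertices.

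The verification is a case analysis over $b^{*} \in B_p$, writing $T$ for the union before removing $w$ so that $|N_H(b^{*})\cap \Gamma^{*}| = |N_H(b^{*})\cap T| - [w \in N_H(b^{*})]$. The neighbours of $b^{*}$ consist of $W_p$ together with its unique external neighbour, so $|N_H(b^{*}) \cap T|$ equals $|W_p \cap T|$ (which counts the $b_\ell \in \Gamma_G(w_j)$ that are auxiliary-adjacent to $w_p$) plus the indicator $[b_p \in \Gamma_G(w_j)]$ (since the external neighbour of $b^{*}$ lies in $T$ exactly when $b_p \in \Gamma_G(w_j)$). The key tool is Corollary~\ref{cor1}: reading the $(p,j)$-entry of $A^{T}B = J+I$ gives
\[
|N_G(w_p)\cap \Gamma_G(w_j)| = 1 + [p=j].
\]
Combined with $B_{pp}=0$, a three-way split shows $|N_H(b^{*})\cap T| = 1 + [p=j]$ uniformly: for $p=j$ the two auxiliary-neighbours of $w_j$ contribute $|W_j \cap T|=2$ while $b_j \notin \Gamma_G(w_j)$; for $p \ne j$ with $b_p \in \Gamma_G(w_j)$ the unique element of $N_G(w_p)\cap \Gamma_G(w_j)$ is the rung-partner $b_p$, contributing $0$ to the biclique count but $1$ to the external count; and for $p \ne j$ with $b_p \notin \Gamma_G(w_j)$ that unique element is an auxiliary-partner of $w_p$, contributing $1$ to the biclique count and $0$ externally.

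Finally $[w \in N_H(b^{*})]$ equals $1$ precisely when $b^{*} \in B_j$ or $b^{*}=b$. Subtracting this from $1 + [p=j]$ yields $0$ when $b^{*}=b$ and $1$ for every other black vertex of $H$, confirming that $\Gamma^{*}$ is a mate for $b$ and hence $H$ is a cubic Lehman graph with $k=-1$. The main obstacle is the third-paragraph case analysis: the subtlety is that when $p \ne j$ the unique common neighbour of $w_p$ and $\Gamma_G(w_j)$ is either the rung-partner $b_p$ or an auxiliary-partner of $w_p$, and these two possibilities produce exactly the compensating shift between the biclique count $|W_p \cap T|$ and the external-edge indicator $[b_p \in \Gamma_G(w_j)]$; pinning this down requires invoking Corollary~\ref{cor1} on the white-vertex side, not merely the Lehman condition for the black mates of $G$.
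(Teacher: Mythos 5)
Your proof is correct and follows essentially the same route as the paper: you construct exactly the same candidate mate (the out-neighbours of the blocks $B_\ell$ with $b_\ell \in \Gamma_G(w_j)$, with $w$ removed) and verify it by the same domination count that the paper delegates to the argument of Lemma~\ref{negperfectmatching}. The only cosmetic difference is that you organise the block-by-block counting around the identity $A^{T}B = J+I$ from Corollary~\ref{cor1}, which the paper's version of the count uses implicitly.
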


\begin{proof}
We use the notation from Lemma~\ref{negperfectmatching}, and use Figure~\ref{fig:mate2} as an analogous figure to Figure~\ref{fig:mate1}. As before, let $b$ be an arbitrary black vertex lying in a set $B_i$ and let $w$ be its out-neighbour lying in block $W_j$. In Figure~\ref{fig:mate2}, $b \in B_2$ while $w \in W_3$, but the argument only requires that $i \not= j$.

\begin{figure}[htb]
\begin{center}
\begin{tikzpicture}[scale=1]
\tikzstyle{rowvertex}=[circle,fill=black,draw=black ,inner sep = 0.8mm]
\tikzstyle{colvertex}=[circle,fill=white,draw=black ,inner sep = 0.8mm]

\foreach \x in {0,1, 3,4, 6, 7, 9, 10, 12, 13, 15, 16, 18, 19, 21, 22} {
\node [rowvertex] (b\x) at (\x/2,2) {};
\node [colvertex] (w\x) at (\x/2,0) {};
}

\foreach \x in {1,6,7,12,16,21}{
\draw (\x/2-0.14,-0.14) rectangle ++(0.28,0.28);
}
 
\foreach \y in {0.25, 1.75, 3.25, 4.75, 6.25, 7.75, 9.25, 10.75} {
\draw decorate [decoration={zigzag}] {(\y,0.25) -- (\y,1.75)};
}

\node at (0.25,2.5) {$B_1$};
\node at (1.75,2.5) {$B_2$};
\node at (3.25,2.5) {$B_3$};
\node at (4.75,2.5) {$B_4$};
\node at (6.25,2.5) {$B_5$};
\node at (7.75,2.5) {$B_6$};
\node at (9.25,2.5) {$B_7$};
\node at (10.75,2.5) {$B_8$};

\node at (0.25,-0.5) {$W_1$};
\node at (1.75,-0.5) {$W_2$};
\node at (3.25,-0.5) {$W_3$};
\node at (4.75,-0.5) {$W_4$};
\node at (6.25,-0.5) {$W_5$};
\node at (7.75,-0.5) {$W_6$};
\node at (9.25,-0.5) {$W_7$};
\node at (10.75,-0.5) {$W_8$};

\node [inner sep = 0pt] (bl) at (2.35,2.45) {$b$};
\node [left, outer sep = 4pt] at (w6) {$w$};

\draw [->](bl)--(b4);

\draw (1.75,2) ellipse (0.5cm and 0.25cm) {};
\draw (4.75,2) ellipse (0.5cm and 0.25cm) {};
\draw (9.25,2) ellipse (0.5cm and 0.25cm) {};

\draw (b3)--(w1);
\draw (b4)--(w6);
\draw (b9)--(w7);
\draw (b10)--(w12);
\draw (b18)--(w16);
\draw (b19)--(w21);

\end{tikzpicture}
\end{center}
\caption{Finding the mate of $b$ in $e(G,\mathcal{M})$ when $G$ is positive}
\label{fig:mate2}
\end{figure}
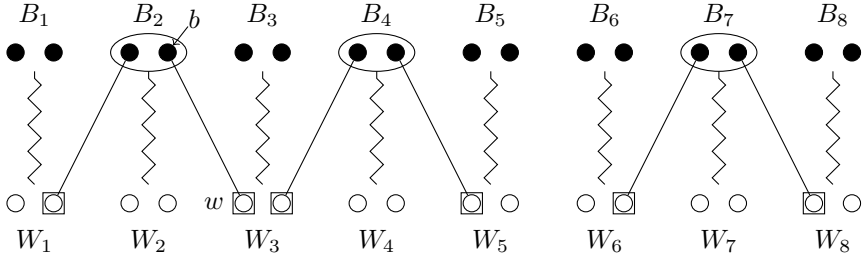

As $G$ is a Lehman graph with $k=1$, the vertex $w_j$ has a mate $\Gamma_G(w_j)$ which is a set of black vertices dominating $w_j$ twice and every other black vertex once. As $\{b_j, w_j\}$ is a rung of $G$, it follows that $b_j \notin \Gamma_G(w_j)$ and hence $b_i \in \Gamma_G(w_j)$. From this mate, we define a set $\Gamma$ of white vertices of $e(G,\mathcal{M})$ by taking the out-neighbours of the vertices in the sets $B_\ell$ provided $b_\ell \in \Gamma_G(w_j)$ and then \emph{removing} the vertex $w$. 

Then arguments essentially identical to those in Lemma~\ref{negperfectmatching} apply unchanged, except for the special status of $b$ and $w$. In summary, for $\ell \neq i$, the vertices of $B_\ell$ are either all dominated by a single vertex in $W_\ell$, or each vertex in $B_{\ell}$ is dominated by its out-neighbour. Which case occurs depends on whether $b_\ell \in \Gamma_G(w_j)$ or not. As $b_i \in \Gamma_G(w_j)$ this general rule indicates that $b \in B_i$ should be dominated by its out-neighbour, which is $w$, but as this has been explicitly excluded from $\Gamma$, the vertex $b$ is the unique undominated black vertex, as required for the mate of $b$ in a negative Lehman graph.   
\end{proof}

We note, without proof, that if $\mathcal{N}$ is any perfect matching of $G$ \emph{other than} the rungs, then although $e(G,\mathcal{N})$ is a well-defined $r$-regular graph, it is not a Lehman graph.

Let $G$ be the graph obtained from $K_{r+1,r+1}$ by deleting a perfect matching. Then the bipartite adjacency matrix $A$ of this graph is equal to $J - I_{r+1}$. If $B = I_{r+1}$, then $AB^T = J-I_{r+1}$ and so $G$ is a negative Lehman graph of type $(r+1,r,1)$. Up to isomorphism, $G$ has a unique perfect matching $\mathcal{M}$ and the expanded graph $e(G,\mathcal{M})$ is a Lehman graph of type $(r^2-1,r,r)$. The clutter matrices of these graphs are those found by Wang \cite{MR2739489}, who also showed that these matrices are minimally nonideal.  

\section{A catalogue of cubic Lehman matrices}
\label{lehmancat}

In this section we describe the computation of a catalogue of cubic Lehman matrices with $k=\pm 1$ thereby partially extending, verifying, and in one instance correcting, the catalogue of L\"{u}tolf \& Margot. Here we recall that our Lehman matrices are square but may have $k=-1$, while their Lehman matrices need not be square, but must have $k > 0$. In addition, they were primarily focussed on (not-necessarily square) mni matrices and so the two catalogues overlap, but are not directly comparable.

We used Gunnar Brinkmann's cubic graph generator \verb+minibaum+ \cite{MR1408342} to generate cubic bipartite graphs on up to $40$ vertices from which to extract the Lehman graphs. Although this is a huge computation, it is possible to prune the generation tree to some extent. For example, if two vertices of degree three in a partially-constructed graph have the same neighbourhood, then any cubic graph constructed by adding additional vertices will have a singular bipartite adjacency matrix, so there is no point in further extending that graph. It is certainly possible to do more sophisticated pruning, but there is a complicated trade-off between computer time, programming time, and the chance of introducing subtle bugs. In the end we opted to modify \verb+minibaum+ by the smallest amount required to make the computation feasible, in the end spending about two months on four $12$-core computers.

There are two natural notions of equivalence for Lehman graphs/matrices one of which is a refinement of the other. In the graph context, we may view two Lehman graphs as being equivalent if and only if there is a \emph{colour-preserving} isomorphism between them. However it is equally natural to just take graph isomorphism (ignoring the vertex colours) as the appropriate concept of equivalence, which we denote \emph{colour-blind isomorphism}. In matrix terms, this corresponds to viewing two matrices as equivalent if one can be obtained from the other by using only row- and column-permutations, or whether matrix transposition is also permitted. The relationship between the two notions of equivalence is straightforward. Each Lehman graph (counted up to graph isomorphism) contributes either $1$ or $2$ to the count of Lehman graphs up to colour-preserving graph isomorphism, depending on whether it has a colour-reversing automorphism or not, respectively. In Tables~\ref{poslehnum} and \ref{neglehnum} the columns $\ell(n)$ and $\ell'(n)$ give the numbers of Lehman graphs up to colour-blind and colour-preserving isomorphism respectively.

\begin{table}
\begin{center}
\begin{tabular}{ccrr}
\hline
$(n,r,s)$ & $2n$ & $\ell(n)$ & $\ell'(n)$ \\
\hline
$(5,3,2)$&$10$&$1$&$1$ \\
$(8,3,3)$&$16$&$2$&$2$ \\
$(11,3,4)$&$22$ &$4$& $4$\\
$(14,3,5)$&$28$&$17$&$18$ \\
$(17,3,6)$&$34$&$71$&$98$ \\
$(20,3,7)$&$40$&$491$&$785$ \\
\hline
\end{tabular}
\end{center}
\caption{Cubic Lehman matrices (with $k=1$) of order at most $20$}
\label{poslehnum}
\end{table}

The numbers given in Table~\ref{poslehnum} differ in only one place from the corresponding numbers found by L\"{u}tolf \& Margot \cite{LM98} --- we find $98$ Lehman graphs (up to colour-preserving isomorphism) of type $(17,3,6)$ compared to the $97$ that they found. By downloading the files associated with their paper, we have confirmed that this is a genuine omission and that our catalogues correspond in all other respects. This omitted graph is shown in Figure~\ref{lmmissing}.

\begin{table}
\begin{center}
\begin{tabular}{ccrr}
\hline
$(n,r,s)$ & $2n$ &  $\ell(n)$ & $\ell'(n)$ \\
\hline
$(4,3,1)$&$8$&$1$&$1$ \\
$(7,3,2)$&$14$&$1$&$1$ \\
$(10,3,3)$&$20$ &$2$&$2$ \\
$(13,3,4)$&$26$&$5$&$5$ \\
$(16,3,5)$&$32$ &$19$&$21$ \\
$(19,3,6)$&$38$&$105$&$154$ \\
$(22,3,7)$&$44$&$853$&$1488$ \\
\hline
\end{tabular}
\end{center}
\caption{Negative cubic Lehman matrices of order at most $22$}
\label{neglehnum}
\end{table}

The numbers given in Table~\ref{neglehnum} coincide with those found by Boros, Gurvich and Hougardy \cite{MR1936943}, with the minor caveat that they were only concerned with the total numbers of $(\alpha,\omega)$-graphs, so did not give values for $\ell(n)$.

\subsection{Cubic mni matrices}

Recall that a clutter matrix is minimally nonideal (mni) if it is not ideal, but all of its proper minors are ideal. A (not-necessarily square) mni matrix $A$ consists of a (necessarily square) Lehman submatrix containing all the rows of minimum weight, say $r$, along with zero or more additional rows of strictly greater weight. It is easy to see that the polyhedron $Q(A)$ has a vertex at $\frac{1}{r} \mathbf{1}$ (the point with all coordinates equal to $\frac{1}{r}$) and \cite{Leh89} showed that if $A$ is mni then this is the \emph{unique} fractional vertex of $Q(A)$.

There are two common ways to represent a polyhedron computationally --- the $H$-representation is a list of inequalities defining half-spaces whose intersection is the polyhedron, and the $V$-representation is a list of the vertices.  In our situation, if $A$ is an $m \times n$ clutter matrix, then $Q(A)$ is defined by $m+n$ inequalities. The first $m$ inequalities, one per row of $A$, are all of the form 
\[
a_{i,0}x_0 + a_{i,1}x_1 + \cdots + a_{i,n}x_n \geq 1,
\]
while the remaining $n$ inequalities, one per column of $A$, are simple non-negativity constraints of the form $x_j \geq 0$. Software is readily available (e.g. in \verb+SageMath+) to convert the $H$-representation of a polyhedron to the $V$-representation (and vice versa if desired). For the sizes we are considering (matrices with around $20$ rows and columns), the process takes no more than a second or so, and as the coordinates of the vertices of $Q(A)$ are rational, it is easy to check how many are fractional.

In this fashion, we can determine which of the Lehman matrices we have constructed are mni themselves (i.e. with no additional rows), and these numbers are shown in Table~\ref{tab:mni} (all these are equivalent to their transpose). This data is consistent with the view that only a small cubic Lehman graph can be mni, and that in general more constraints tend to create more fractional vertices. This seems convincing enough that we are willing to make the following conjecture.

\begin{conjecture}
\label{nocubicmni}
There are no $n \times n$ cubic mni matrices for $n > 17$.
\end{conjecture}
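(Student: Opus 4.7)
The plan is to prove the conjecture by strong induction on $n$, with the base cases $5 \leq n \leq 17$ handled by the exhaustive computational catalogue of Section~\ref{lehmancat}, and the cases $18 \leq n \leq 20$ covered by the same computational data (which shows that no $n \times n$ cubic mni matrix exists in this range). For the inductive step, let $A$ be a candidate $n \times n$ cubic mni matrix with $n \geq 21$. The strategy is to exhibit a fractional vertex of $Q(A)$ distinct from $\tfrac{1}{3}\mathbf{1}$, contradicting Lehman's characterisation of the fractional vertices of mni matrix polyhedra.

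The main case is when the cubic Lehman graph $G$ associated with $A$ contains a $3$-rung ladder segment $L$. The results of Section~\ref{ladders} then give a reduced Lehman matrix $A' = A \mathord{\downarrow} L$ of order $(n-3) \times (n-3)$. The key technical statement I would aim to establish is a \emph{lifting lemma}: any fractional vertex $x'$ of $Q(A')$ distinct from $\tfrac{1}{3}\mathbf{1}$ extends to a fractional vertex $x$ of $Q(A)$ distinct from $\tfrac{1}{3}\mathbf{1}$, where the six new coordinates (for the vertices of $L$) are determined by solving a small local linear system dictated by the ladder geometry and the tightness pattern of $x'$ at the boundary vertices $\{w_L,b_L,w_R,b_R\}$. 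Since $A'$ is at most $(n-3) \times (n-3)$ with $n-3 > 17$, by the induction hypothesis $A'$ is not mni, so a suitable $x'$ exists, producing the required contradiction.

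The case when $A$ has no $3$-rung ladder segment is handled separately via biclique compression (Section~\ref{completebipartite}). The computational evidence shows that such graphs are exceedingly rare, and one would first conjecture (and then attempt to prove) that every cubic Lehman graph with $k=1$ and no $3$-rung ladder arises by biclique expansion from a smaller negative Lehman graph. A parallel lifting argument — this time transferring fractional vertices across a compression step, using Lemmas~\ref{postoneglehman} and \ref{negtoposlehman} to control how the constraints of $Q(A)$ decompose over the bicliques — would reduce the mni question to Wang's negative base cases, which can be checked directly.

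The principal obstacle will be the lifting lemma for ladder insertion, since the mni property is not monotonic under any of the standard clutter operations and is known to behave erratically under local perturbations. Concretely, one must verify that the extension $x$ of $x'$ is (i) a genuine vertex of $Q(A)$, i.e.\ tight at a full-dimensional system of facets of $Q(A)$, and (ii) not accidentally equal to $\tfrac{1}{3}\mathbf{1}$, both of which depend delicately on how $x'$ meets the auxiliary graph and rung structure at the attachment vertices. A secondary obstacle is the gap in the present structural theorem: our two constructions account for every cubic Lehman matrix of order at most $20 \times 20$, but a full proof of Conjecture~\ref{nocubicmni} would likely require first proving that these two constructions exhaust all cubic Lehman matrices, a statement that at present is only conjectural.
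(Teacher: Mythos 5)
The statement you are addressing is a \emph{conjecture} in the paper, not a theorem: the authors offer no proof, only the computational evidence of Table~\ref{tab:mni} (the count of square cubic mni matrices drops to zero at order $20$) together with an informal heuristic that larger matrices tend to have more fractional vertices of $Q(A)$. Your proposal is therefore not being measured against an existing argument, and on its own terms it is a research programme rather than a proof. The decisive gap is the ``lifting lemma'': the claim that a fractional vertex of $Q(A\mathord{\downarrow}L)$ other than $\tfrac{1}{3}\allones$ lifts to a fractional vertex of $Q(A)$ other than $\tfrac{1}{3}\allones$ is precisely the kind of transfer of the mni property across a local graph operation that the paper repeatedly warns against --- being mni ``does not behave nicely'' under such modifications --- and you give no construction of the six new coordinates, no verification that the lifted point satisfies all $n$ covering inequalities of $Q(A)$ (including those indexed by black vertices far from $L$ whose rows nevertheless change at $w_L$ and $w_R$), and no argument that it is tight on an affinely independent set of $n$ facets. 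Until that lemma is actually proved, the induction has no content. You acknowledge the second major gap yourself: the reduction of the ladder-free case rests on the unproven assertion that every such graph arises by biclique expansion, which is exactly the open structural question the paper leaves unresolved.

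There is also a gap you do not acknowledge. A square cubic mni matrix is its own core, and Lehman's theorem only forces $k = rs - n \geq 1$, not $k = 1$; the Fano plane is a $7\times 7$ cubic mni matrix with $k=2$. All of the machinery you invoke --- $3$-rung ladder reduction, the rung perfect matching, biclique compression --- applies only to cubic Lehman graphs with $k = \pm 1$, so your induction says nothing about a hypothetical large cubic Lehman matrix with $k \geq 2$ that happens to be mni. The paper's evidence that such matrices are rare (projective planes and Moore graphs) is again only heuristic. In short: the conjecture remains open, and your outline, while a reasonable sketch of how one might attack the $k=1$ case, does not close any of the three holes (the lifting lemma, the exhaustiveness of the constructions, and the $k\geq 2$ case) that stand between the computational data and a theorem.
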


\begin{table}
\begin{center}
\begin{tabular}{cc}
\hline
Parameters & Number \\
\hline
$(5,3,2)$ & $1$ \\
$(8,3,3)$ & $2$ \\
$(11,3,4)$ & $4$ \\
$(14,3,5)$ & $9$ \\
$(17,3,6)$ & $4$ \\
$(20,3,7)$ & $0$ \\
\hline
\end{tabular}
\caption{Numbers of cubic mni Lehman matrices}
\label{tab:mni}
\end{center}
\end{table}

As difficult as it seems to understand square mni matrices, even just cubic ones, the situation is far worse when considering non-square mni matrices. Adding a row to a clutter matrix alters the polyhedron by intersecting it with a new halfspace, thereby both cutting off some existing vertices and adding some new ones. If the new halfspace cuts off more fractional vertices than it creates then (in some sense) the matrix is getting ``closer'' to being mni.  L\"utolf \& Margot used a heuristic based on this general idea to add collections of rows to the $(17,3,6)$ Lehman matrices that they had constructed, and succeeded in extending them to an mni matrix about 30\% of the time. We have not attempted to extend this part of their project.

\section{Projective planes}
\label{ppcore}

The general problem of deciding when a Lehman matrix can be extended to an mni matrix by adding rows seems very difficult. In this section, we consider a very special sub-case of this problem, namely when the Lehman matrix in question is the point-line incidence matrix of a non-degenerate projective plane.

To do this, we need some more notation and background results about clutters and mni matrices. A \emph{transversal} of the clutter $\mathcal{C}=(V,E)$ is a subset of $V$ having non-empty intersection with every element of $E$. The \emph{blocker} of $\mathcal{C}$, written $b(\mathcal{C})$, is the clutter having $V$ as its vertex set, and the set of all minimal transversals of $\mathcal{C}$ as hyperedges. Edwards and Fulkerson made the observation that $b(b(\mathcal{C})) = \mathcal{C}$ \cite{EF70}. The blocker involution exchanges deletion and contraction: $b(\mathcal{C}\backslash v) = b(\mathcal{C})/v$ for all $v\in V$. The blocker of an ideal clutter is also ideal \cite{Leh79} (see also \cite[Theorem 1.17]{Cor01}). Therefore the blocker of an mni clutter is also mni. Let $A$ be an mni clutter matrix. We say that the \emph{core} of $A$, written $\core(A)$, is the submatrix consisting of the minimum weight rows of $A$. For an integer $t\geq 2$, let $J_{t}$ be the clutter $(\{0,1,\ldots,t\},\{\{1,\ldots,t\},\{0,1\},\{0,2\},\ldots,\{0,t\}\})$. Then $J_{t}$ is mni \cite[Exercise 4.2]{Cor01}. We call any clutter isomorphic to $J_{t}$ a \emph{degenerate projective plane}. We can now state the fundamental theorem of minimally nonideal matrices, which is due to Lehman (\cite{Leh89}, see also \cite[Theorem 4.3 and Corollary 4.5]{Cor01}).

\begin{theorem}[Lehman]
\label{Lehman}
Let $\mathcal{C}$ be a minimally nonideal clutter. Let $A$ be the clutter matrix corresponding to $\mathcal{C}$, and let $B$ be the matrix corresponding to $b(\mathcal{C})$. If $\mathcal{C}$ is not a degenerate projective plane, then we can permute rows as necessary so that $(\core(A), \core(B))$ is a Lehman pair. 
\end{theorem}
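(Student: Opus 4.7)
The plan is to exploit the polyhedral theory underlying minimally nonideal clutters. The starting input will be the deep structural fact (originally proved by Lehman \cite{Leh89}; see also \cite[Theorem~4.3]{Cor01}): for a non-degenerate mni clutter $\mathcal{C}$, the polyhedron $Q(A)$ has a \emph{unique} fractional vertex, and this vertex is $\bar{x}:=\tfrac{1}{r}\mathbf{1}$, where $r$ is the minimum weight of a row of $A$. I will take this as given rather than reprove it.

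The first step is to identify $\core(A)$ combinatorially at $\bar{x}$. A row $a$ of $A$ gives a constraint tight at $\bar{x}$ exactly when $a^{T}\bar{x}=|\operatorname{supp}(a)|/r=1$, i.e.\ when $a$ has minimum weight $r$; and since $\bar{x}>\mathbf{0}$, no non-negativity constraint is tight. Because $\bar{x}$ is a vertex of $Q(A)\subseteq\mathbb{R}^{n}$, the tight rows span $\mathbb{R}^{n}$, so $\core(A)$ has rank at least $n$. I would then argue that $\core(A)$ is in fact exactly $n\times n$ and non-singular by an LP-duality/complementary-slackness argument: a non-trivial dependency among rows of $\core(A)$ would produce a positive-dimensional face of optima for the dual LP $\max\mathbf{1}^{T}y$ subject to $y^{T}A\le\mathbf{1}^{T}$, $y\ge\mathbf{0}$, and transporting that flexibility back into the primal through the face structure of $Q(A)$ would construct a second fractional vertex, contradicting uniqueness. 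Since $b(\mathcal{C})$ is also mni \cite{Leh79}, the same analysis applied to $B$ and $Q(B)$ gives that $\core(B)$ is $n\times n$ non-singular with unique fractional vertex $\tfrac{1}{s}\mathbf{1}$, where $s$ is the minimum row-weight of $B$. From $\core(A)\bar{x}=\mathbf{1}$ and $\bar{x}=\mathbf{1}/r$ I obtain $\core(A)\mathbf{1}=r\mathbf{1}$, and analogously $\core(B)\mathbf{1}=s\mathbf{1}$. The unique dual optimum (made unique by non-singularity of the core) then gives column-regularity as well: $\mathbf{1}^{T}\core(A)=r\mathbf{1}^{T}$ and $\mathbf{1}^{T}\core(B)=s\mathbf{1}^{T}$.

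With both cores established as $n\times n$ and, respectively, $r$- and $s$-regular, set $M:=\core(A)\core(B)^{T}$. Each entry $M_{ij}=|E_{i}\cap T_{j}|$, where $E_{i}$ indexes a minimum edge and $T_{j}$ a minimum transversal, is at least $1$ because every transversal meets every edge; hence $M\ge J$ entrywise. By regularity, every row- and column-sum of $M$ equals $rs$, so $N:=M-J$ is a non-negative integer matrix with all row- and column-sums equal to $k:=rs-n$. The final step is to show $N=kI$ after a suitable permutation of the rows of $\core(B)$ (the flexibility permitted by the statement). For this I would multiply $M=J+N$ on the left by $\core(A)^{-1}$ and use $\core(A)^{-1}\mathbf{1}=\bar{x}=\mathbf{1}/r$ to obtain a formula for $\core(B)^{T}$ as a rank-one term plus $\core(A)^{-1}N$; the $(0,1)$-integrality of $\core(B)^{T}$ and its known constant row-sums then force $N$ to be $k$ times a permutation matrix, and relabelling the rows of $\core(B)$ accordingly reduces this to $kI$.

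The main obstacle I expect is precisely this last rigidity step: passing from ``$M\ge J$ with the correct marginals'' to ``all off-diagonal entries of $M$ equal $1$''. It is here that non-singularity of the two cores together with the uniqueness of the fractional vertex must combine to leave no room for any off-diagonal excess. Once $\core(A)\core(B)^{T}=J+kI$ has been established, Theorem~\ref{BridgesRyser} certifies the regularities and the relation $rs=n+k$; non-degeneracy forces $r\ge 2$ and the integrality/positivity constraints yield $k\ge 1$, so $k\in\{-1,1,2,\ldots\}$ and $(\core(A),\core(B))$ is a Lehman pair as required by Definition~\ref{lehmandef}.
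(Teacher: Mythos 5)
The paper does not prove this theorem at all: it is quoted as Lehman's classical result, with the proof delegated to \cite{Leh89} and \cite[Theorem 4.3 and Corollary 4.5]{Cor01}. So there is no ``paper proof'' to match; what has to be judged is whether your sketch would stand on its own, and it would not. You openly import the statement that $Q(A)$ has a unique fractional vertex and that it equals $\tfrac{1}{r}\mathbf{1}$. That is not a neutral polyhedral preliminary --- in the standard development it is one of the \emph{conclusions} of Lehman's theorem, proved only after (and by means of) the matrix identity $\core(A)\core(B)^{T}=J+kI$. Taking it as input and then trying to rederive the matrix identity from it runs the argument backwards, and the circularity surfaces concretely at two points. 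First, your column-regularity claim $\mathbf{1}^{T}\core(A)=r\mathbf{1}^{T}$: complementary slackness (using $\bar{x}>\mathbf{0}$) gives only that the optimal dual vector $y$ supported on the core satisfies $y^{T}\core(A)=\mathbf{1}^{T}$, i.e.\ $y^{T}=\mathbf{1}^{T}\core(A)^{-1}$; concluding $y=\tfrac{1}{r}\mathbf{1}$ is exactly the assertion that the column sums are $r$, which is what you are trying to prove. In the literature column-regularity of the cores is obtained from Theorem~\ref{BridgesRyser} \emph{after} the identity $\core(A)\core(B)^{T}=J+kI$ is in hand, not before. Without it, your computation of the row- and column-sums of $M=\core(A)\core(B)^{T}$ collapses, and with it the definition of $N$ and $k$.

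Second, even granting the marginals, the step you yourself flag as ``the main obstacle'' --- forcing all off-diagonal entries of $M$ to equal $1$ --- is precisely where the depth of Lehman's theorem lives, and no argument is supplied. A non-negative integer matrix $N$ with constant row- and column-sums $k$ and the property that $\core(A)^{-1}N$ plus a rank-one term is a $(0,1)$-matrix is not visibly forced to be $k$ times a permutation matrix; nothing in the sketch rules out, say, $N$ having two entries in one row. The actual proofs establish $\core(A)\core(B)^{T}=J+D$ with $D$ diagonal by a pairing between core rows of $A$ and integral vertices of $Q(A)$ adjacent to the fractional vertex (each such neighbour being a minimum transversal, hence a row of $\core(B)$), and then show $D=kI$ by a separate connectivity/counting argument; none of that machinery appears here. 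Likewise, the claim that the core is exactly $n\times n$ is waved at with an unexecuted duality argument. In short: the proposal assumes the hardest ingredient of the theorem and still leaves the remaining derivation resting on a circular regularity claim and an unproved rigidity step, so it is not a proof. Given that the paper treats this as a black-box citation, the appropriate ``proof'' here is simply the reference.
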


Any non-degenerate projective plane can be considered as a clutter, where the vertex set is the set of points, and the hyperedges are the lines. These clutters are heavily-studied and highly-structured, so we might hope that it would be possible to classify mni matrices whose core is such a clutter. We make the following conjecture:
\begin{conjecture}
\label{conj:pp}
If $A$ is a minimally nonideal matrix whose core is a non-degenerate projective plane, then $A$ is square and equal to the point-line incidence matrix of the Fano plane.
\end{conjecture}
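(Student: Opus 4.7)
The approach is to combine Lehman's structural theorem (Theorem~\ref{Lehman}) with a detailed analysis of the polyhedron $Q(M)$, where $M$ is the point-line incidence matrix of $\mathrm{PG}(2,q)$ for $q\in\{2,3\}$. Since $MM^{T}=J+qI$, the matrix $M$ is a self-paired $(q^{2}+q+1,q+1,q+1)$-Lehman matrix. Thus, for any hypothetical mni $A$ with $\core(A)=M$, Lehman's theorem forces $\core(b(A))$ to equal $M$ as well (up to permutation), and the projective-plane structure constrains both $A$ and its blocker.

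I would first handle the Fano case ($q=2$). The starting point is the classical fact, which I would verify directly, that the Fano plane incidence matrix $M$ is itself mni: the unique fractional vertex of $Q(M)$ is $\tfrac{1}{3}\mathbf{1}$, and every column minor $M\setminus c$ and $M/c$ is ideal. This already gives the conclusion when $A=M$. To rule out any other $A$, I would suppose $A$ has an extra row $r$ of weight $w\geq 4$ (no weight-$3$ row beyond a Fano line is admissible in the core, since every other $3$-subset of the seven points either contains or is contained in a line). Using the self-blocking property of the Fano plane, the Fano lines must remain minimal transversals of $A$; combined with Lehman's theorem applied to $b(A)$, this forces rigid constraints on the support of $r$, and I would then exhibit a column $c$ outside $\operatorname{supp}(r)$ for which the minor $A\setminus c$ fails to be ideal, contradicting minimal nonideality.

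The ternary case ($q=3$) is where the work lies. Here $M$ is \emph{not} itself mni, because $Q(M)$ admits fractional vertices other than $\tfrac{1}{4}\mathbf{1}$, most notably half-integer points $\tfrac{1}{2}\chi_{S}$ supported on double blocking sets (vertex sets meeting every line in at least two points). The plan is to classify the fractional vertices of $Q(M)$ using the incidence geometry of $\mathrm{PG}(2,3)$ (blocking sets, ovals, conics, and their duals), and then to show that no family of extra rows added to $M$ can simultaneously (i) cut off every fractional vertex of $Q(M)$ other than $\tfrac{1}{4}\mathbf{1}$, (ii) preserve $\tfrac{1}{4}\mathbf{1}$ as a vertex of the new polyhedron, and (iii) keep every proper minor ideal. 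Each added row $r$ contributes a constraint $rx\geq 1$, and I would show that killing a spurious fractional vertex inevitably creates a non-ideal minor $A\setminus c$ for a suitable column $c\notin\operatorname{supp}(r)$, by transporting a fractional point of $Q(M\setminus c)$ through the deletion.

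The main obstacle is the ternary case, specifically the twofold difficulty of enumerating the fractional vertices of $Q(M)$ for $\mathrm{PG}(2,3)$ and ruling out every candidate collection of additional rows simultaneously. I expect to exploit the self-duality of the plane together with the symmetric role of $A$ and $b(A)$ in Lehman's theorem to halve the casework, and to verify the remaining obstructions by small polyhedral computations on the $13$-dimensional polyhedron when a purely combinatorial argument proves stubborn.
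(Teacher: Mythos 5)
The statement you are proving is a \emph{conjecture}: it quantifies over all non-degenerate projective planes, but your proposal only ever discusses $\mathrm{PG}(2,q)$ for $q\in\{2,3\}$. Even if every step you sketch were carried out, you would have established only the two partial results that the paper itself proves (Theorems~\ref{fanocore} and~\ref{ternarycore}); planes of order $q\ge 4$, including non-Desarguesian ones, are untouched, so the conjecture would remain open. The paper is explicit that this is all that is known. A second, smaller gap: you assert that Lehman's theorem "forces $\core(b(A))$ to equal $M$"; this is true but requires an argument (the blocker's core could a priori be a different Lehman partner of $M$), which the paper supplies in Lemma~\ref{lem2}(i) via a counting argument using Bridges--Ryser.

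Within the two cases you do address, the key idea you are missing is the blocking-set characterization of extra hyperedges. Since the blocker of $\mathcal{C}$ also has $\mathcal{P}$ as its core, every hyperedge of $\mathcal{C}$ is a transversal of $\mathcal{P}$, hence is a line or contains a blocking set but no line (Lemma~\ref{lem2}(ii)). For the Fano plane this finishes immediately: $\mathrm{PG}(2,2)$ has no blocking sets, so there are no extra rows and $A$ is square --- no hunt for a bad column minor is needed, and your sketched plan for that hunt ("rigid constraints on the support of $r$" leading to a non-ideal $A\setminus c$) is not substantiated. For $\mathrm{PG}(2,3)$ the paper does not reason polyhedrally at all: Di Paola's classification says the only blocking sets are the $0$-corners of triangles; Lemma~\ref{noDPP} shows (by exhibiting degenerate projective plane minors otherwise) that $\mathcal{C}$ must contain \emph{every} $0$-corner; and Lemma~\ref{doubletriangle} shows the clutter consisting of the lines plus all $0$-corners has a proper Fano-plane minor, hence is not mni. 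Your alternative programme --- enumerate the fractional vertices of $Q(M)$ and show that any row cutting off a spurious vertex creates a non-ideal column-deletion minor --- rests entirely on that last implication, which you assert but do not justify and which is not obviously true; as written it is a research plan rather than a proof.
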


Novick \cite{Novick} showed that the Fano plane is the only non-degenerate projective plane whose point-line incidence matrix is mni, thus proving Conjecture~\ref{conj:pp} under the assumption that $A$ is square. Her proof was not phrased in geometric terms, so we give an alternative proof that rehearses some of the terminology and ideas we will use later. A \emph{triangle} in a projective plane is the union of three lines that do not share a common point. The \emph{corners} of the triangle are the points in two of the lines.

\begin{proposition}[Novick \cite{Novick}]
\label{trDPP}
Let $A$ be the point-line incidence matrix of a non-degenerate projective plane $\mathcal{P}$. Then $A$ is an mni matrix if and only if $\mathcal{P}$ is the Fano plane.
\end{proposition}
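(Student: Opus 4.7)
My plan is to handle the two directions separately. For the ``if'' direction, I would verify (or cite) the classical fact that the $7\times 7$ point-line incidence matrix of the Fano plane is mni; this is originally due to Lehman \cite{Leh89} and can also be checked by direct enumeration of the vertices of $Q(A)$ (the only integer vertices are the $7$ incidence vectors of lines, the only fractional vertex is $\tfrac13\allones$, and every proper minor is easily seen to be ideal).

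For the ``only if'' direction, suppose $\mathcal P$ has order $q\ge 3$ and assume for contradiction that $A$ is mni. The strengthened form of Lehman's theorem (see e.g.\ \cite[Section 4.2]{Cor01}) asserts that the integer vertices of $Q(A)$ for a square mni matrix whose core is not a degenerate projective plane are precisely the incidence vectors of the rows of $\core(b(A))$, i.e.\ of the minimum-weight blocking sets of the underlying clutter. Since the minimum blocking sets of a projective plane are exactly its $(q+1)$-element lines, this would force every integer vertex of $Q(A)$ to have weight exactly $q+1$. The plan is to exhibit an integer vertex of $Q(A)$ of weight $3(q-1)$, which is strictly larger than $q+1$ when $q\ge 3$, delivering the required contradiction.

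The construction uses the non-corner part of a triangle. Fix three non-concurrent lines $\ell_1,\ell_2,\ell_3$ with corners $p_1=\ell_2\cap\ell_3$, $p_2=\ell_1\cap\ell_3$, $p_3=\ell_1\cap\ell_2$, and set $S=(\ell_1\cup\ell_2\cup\ell_3)\setminus\{p_1,p_2,p_3\}$, so that $|S|=3(q-1)$. Classifying each line of $\mathcal P$ by its number of corners, a direct count gives $|\ell\cap S|=q-1$ for the three $2$-corner lines, $|\ell\cap S|=1$ for each of the $3(q-1)$ one-corner lines, and $|\ell\cap S|=3$ for each of the $(q-1)^2$ zero-corner lines. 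In particular, $\chi_S\in Q(A)$ and every one-corner line constraint is tight at $\chi_S$. I would then certify that $\chi_S$ is a vertex via a tight-constraint count: the $q^2-2q+4$ non-negativity constraints at points outside $S$, together with the $3(q-1)$ tight one-corner line constraints, give $q^2+q+1$ tight constraints in total, matching the ambient dimension.

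The main obstacle is verifying linear independence, but this is transparent. The non-negativity constraints simply fix the coordinates outside $S$ to zero. Among the remaining coordinates (the $3(q-1)$ non-corner points of the triangle), the one-corner line constraints decouple through each corner: for each corner $p_{ij}$, the $q-1$ one-corner lines through $p_{ij}$ meet $S$ only on the opposite side $\ell_k$, and they are in bijection with the $q-1$ non-corner points of $\ell_k$, each such line constraint reducing after the zero-substitutions to ``the unique non-corner coordinate on the line equals $1$.'' Hence the system is upper-triangular with all pivots equal to one, so $\chi_S$ is indeed a vertex of $Q(A)$ of weight $3(q-1)>q+1$, contradicting the forced weight $q+1$ and completing the proof.
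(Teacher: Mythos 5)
Your ``if'' direction is fine (the paper also just cites this as well known). The problem is in the ``only if'' direction: the version of Lehman's theorem you invoke is not correct. Lehman's theorem says that the \emph{minimum-weight} hyperedges of $b(\mathcal{C})$ form the core $\core(b(A))$ and pair with $\core(A)$, and that $\tfrac{1}{r}\allones$ is the unique \emph{fractional} vertex of $Q(A)$; it does not say that these minimum-weight transversals are the only integer vertices. For any clutter, the integral vertices of $Q(A)$ are exactly the characteristic vectors of all minimal transversals, i.e.\ all hyperedges of $b(\mathcal{C})$, and an mni clutter (even a square one) can perfectly well have minimal transversals of several different weights. The odd hole $C_{7}$ is a square mni matrix whose blocker has hyperedges of weights $4$ and $5$, so $Q(A)$ there already has integer vertices of weight exceeding the core weight. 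Consequently your carefully constructed vertex $\chi_{S}$ of weight $3(q-1)$ is a genuine integral vertex of $Q(A)$ (your incidence counts and the tight-constraint/independence argument are all correct, and $S$ is indeed a minimal transversal), but its existence contradicts nothing: it merely witnesses that the plane has blocking sets, which is entirely compatible with being mni. The argument therefore does not close.

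The paper's proof goes a different and much shorter way: it uses the minor characterisation rather than the polyhedron. Given a triangle $T$ in a plane of order $q>2$, delete all points outside $T$ and contract all non-corner points of $T$; because every line has $q+1\geq 4$ points, the only hyperedges surviving the deletion are the three lines of $T$, and the resulting minor is the degenerate projective plane $J_{2}$, which is mni. Since every proper minor of an mni clutter must be ideal, $\mathcal{P}$ cannot be mni. If you want to stay inside the polyhedral world, the viable route would be to exhibit a \emph{second fractional} vertex of $Q(A)$ (contradicting uniqueness of $\tfrac{1}{r}\allones$), not an extra integer one; as written, your proof has a fatal gap at its central step.
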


\begin{proof}
One direction of the proof is well known. Suppose that $\mathcal{P}$ is a projective plane of order $k>2$, and let $T$ be a triangle of $\mathcal{P}$. Form a minor of $\mathcal{P}$ (now viewed as a clutter) by first deleting all points not in $T$, and then contracting all the points in $T$ except for the corners. As each line of $\mathcal{P}$ contains at least four points, the only lines contained in $T$ are the three lines of the triangle. This means that after deleting the points not in $T$, we obtain a clutter with exactly three hyperedges. After subsequently contracting the non-corner points, we have a minor isomorphic to the degenerate projective plane $J_{2}$. As $\mathcal{P}$ has a proper minor that is mni, it is not mni itself. 
\end{proof}

Our next results show that no counterexample to Conjecture \ref{conj:pp} can have a core that is the point-line incidence matrix of either the Fano plane $\mathrm{PG}(2,2)$ or the ternary plane $\mathrm{PG}(2,3)$. (In \cite{LM98}, L\"{u}tolf \& Margot denote $\mathrm{PG}(2,3)$ as $L_{13}^{4}(1)$. In a footnote to that paper, they assert without proof the result stated in Theorem \ref{ternarycore}.)

\begin{theorem}
\label{fanocore}
If $A$ is a minimally nonideal clutter matrix whose core is the point-line incidence matrix of the Fano plane $\mathrm{PG}(2,2)$, then $A$ is square.
\end{theorem}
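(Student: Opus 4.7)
The plan is to suppose, for contradiction, that $A$ has a row $a$ outside $\core(A)$, with support $S$. Because $\core(A)$ consists of rows of minimum weight $r=3$ and $a$ has strictly greater weight, $|S|\geq 4$. I will rule this out by extracting two incompatible constraints on $S$ and comparing sizes.

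The first is a minimality constraint: since $A$ is a clutter matrix, $a$ cannot contain any other row of $A$, so in particular $S$ does not contain any of the seven lines of $\mathrm{PG}(2,2)$, which appear as rows of $\core(A)$. The second comes from Theorem~\ref{Lehman}. As $\core(A)$ is the Fano plane, $A$ is not a degenerate projective plane, so there is a matrix $\core(B)$ such that $(\core(A),\core(B))$ is a Lehman pair. I would then use the identity $\core(A)\core(A)^{T}=J+2I$ together with the invertibility of $\core(A)$ (its Gram matrix $J+2I$ has determinant $9\cdot 2^{6}\neq 0$) to deduce from $\core(A)\core(B)^{T}=J+2I$ that $\core(B)=\core(A)$. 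Thus the rows of $\core(B)$ are the incidence vectors of the seven lines of $\mathrm{PG}(2,2)$, and each is a minimal transversal of the full clutter $\mathcal{C}$; in particular every line meets $S$.

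Combining the two gives $1\leq|S\cap L|\leq 2$ for every line $L$, equivalently, the complement $\bar S=V\setminus S$ is a transversal of $\mathrm{PG}(2,2)$ that contains no line. Since $|S|\geq 4$, we have $|\bar S|\leq 3$. However, the minimum transversal size in $\mathrm{PG}(2,2)$ is $3$ (any pair of points lies on just one of the seven lines and hence misses two of them), and a short inclusion-exclusion shows that a $3$-subset which is not a line is disjoint from exactly one line and so is not a transversal. It follows that $|\bar S|=3$ and $\bar S$ is itself a line, contradicting the condition that $\bar S$ contain no line. Therefore no such row $a$ can exist, and $A$ is square.

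In my view the only non-routine step is the identification $\core(B)=\core(A)$, which converts the abstract Lehman-mate condition into the concrete geometric requirement that every Fano line must meet $S$; once that is in hand, the argument reduces to a one-line pigeonhole on $|\bar S|$ using standard properties of the Fano plane.
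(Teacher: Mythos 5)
Your argument follows essentially the same route as the paper's: the published proof of Theorem~\ref{fanocore} is a one-line application of Lemma~\ref{lem2}, which packages exactly the two facts you establish by hand, namely that the lines of $\mathcal{P}$ are minimal transversals of $\mathcal{C}$ (so any extra hyperedge meets every line but contains none, i.e.\ contains a blocking set) and that the Fano plane has no blocking sets (which the paper cites from Bruen and you verify by inclusion--exclusion). Your counting at the end is correct.

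There is, however, one step you assert rather than prove, and it is precisely the step you single out as the crux: that $\core(A)\core(B)^{T}=J+2I$. Theorem~\ref{Lehman} only guarantees $\core(A)\core(B)^{T}=J+k'I$ for \emph{some} positive integer $k'$; it does not give $k'=k=2$, and without that you cannot cancel the invertible matrix $\core(A)$ to conclude $\core(B)=\core(A)$. The paper devotes most of the proof of Lemma~\ref{lem2}(i) to exactly this point. For the Fano plane the repair is short: the diagonal entry $k'+1$ of $J+k'I$ is the dot product of a weight-$3$ row of $\core(A)$ with the corresponding row of $\core(B)$, so $k'\leq 2$, while Theorem~\ref{BridgesRyser} gives $3s'=7+k'$ for the common row-sum $s'$ of $\core(B)$, forcing $k'\equiv 2\pmod 3$; hence $k'=2$ and your cancellation goes through. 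With that sentence added, your proof is complete and coincides in substance with the paper's.
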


\begin{theorem}
\label{ternarycore}
There is no minimally nonideal clutter matrix whose core is the point-line incidence matrix of $\mathrm{PG}(2,3)$.
\end{theorem}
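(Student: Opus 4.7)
The plan is to extend the triangle-reduction argument of Novick (Proposition \ref{trDPP}) to account for the extra rows that an mni extension of the $\mathrm{PG}(2,3)$ incidence matrix might possess. Assume for contradiction that $A$ is mni with $\core(A)$ equal to the point-line incidence matrix of $\mathrm{PG}(2,3)$. Since $\mathrm{PG}(2,3)$ is self-dual (its incidence matrix $M$ satisfies $MM^T = J + 3I$), Theorem \ref{Lehman} gives that $\core(b(A))$ is also the incidence matrix of $\mathrm{PG}(2,3)$; in particular every line is a minimum transversal of $A$. Each extra row $R$ of $A$ must therefore intersect every line, i.e., $R$ is a blocking set of $\mathrm{PG}(2,3)$, and the clutter condition forbids $R$ from containing any line. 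Letting $n_i$ count the lines meeting $R$ in exactly $i$ points and using $n_4 = 0$, one obtains $n_3 = \binom{|R|}{2} - 4|R| + 13$, which together with $n_1, n_2, n_3 \geq 0$ forces $|R|\in\{6,7,8\}$ (the case $|R|=5$ is ruled out by checking directly that the induced counts $n_3 = 3, n_2 = 1, n_1 = 9$ are not realizable in $\mathrm{PG}(2,3)$ without producing a line inside $R$).

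The main step handles $|R|=6$, where $n_1 = 6, n_2 = 3, n_3 = 4$. A quick double-count rules out any point $p\in R$ lying on three $3$-secants: such a point would force six distinct further points of $R$ on those three lines, contradicting $|R\setminus\{p\}|=5$. Hence every point of $R$ lies on exactly two of the four $3$-secants, so the four $3$-secants are in general position and $R$ is precisely the six vertices of the complete quadrilateral they form. Choosing any three of these four lines produces a triangle $T'$ whose three corners lie in $R$, with the three remaining points of $R$ appearing one per side. Applying Novick's reduction to $T'$ --- delete the four points of $\mathrm{PG}(2,3)$ lying outside $T'$ and contract the six non-corner points of $T'$ --- makes the three sides collapse to the three $2$-subsets of the corner set, producing $J_2$. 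The extra row $R$ collapses to the full corner set, which is non-minimal and is discarded. A parallel check shows that any other extra row $R^{*}\subseteq T'$ either is discarded as non-minimal or (in the extreme case $R^{*}\cap\{\text{corners}\}=\emptyset$) contracts to the empty edge, rendering the polytope infeasible and the minor non-ideal. Either way, $A$ has a proper non-ideal minor, contradicting the mni hypothesis.

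For $|R|\in\{7,8\}$, the analogous counts give $n_3\geq 6$. I would first show that among these $3$-secants one can always find four in general position whose six pairwise intersections lie in $R$: the identity $\sum_{p\in R}\#\{3\text{-secants through }p\} = 3n_3$ bounds the number of points of $R$ that lie on three or more $3$-secants, so one can discard a short list of $3$-secants to isolate four lines realising the complete quadrilateral. The triangle spanned by any three of these four then yields a $J_2$ minor by the argument of the previous paragraph, since any additional points of $R$ outside the triangle are killed by the deletion step of the reduction.

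The main obstacle is global coordination: one must ensure that the chosen triangle $T'$ is not spoiled by some other extra row $R^{*}$ of $A$ whose contraction keeps the resulting minor ideal. I expect this is handled by a double-counting argument over the $234$ triangles of $\mathrm{PG}(2,3)$, exploiting the transitive action of $\mathrm{PGL}(3,3)$ on triangles and on complete quadrilateral configurations, together with the observation that each size-$6$ extra row obstructs only the four triangles in which it sits as a full corner-plus-sides configuration. The most delicate step is verifying that every extra row can play this role in at least one triangle, ensuring the $J_2$-minor strategy applies uniformly across all cases.
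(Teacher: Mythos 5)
Your setup (self-duality of $\mathrm{PG}(2,3)$, every extra row is a blocking set containing no line, the intersection counts forcing $n_3=4$, $n_2=3$, $n_1=6$ for $|R|=6$, and the identification of such an $R$ with the six vertices of a complete quadrilateral) is sound and consistent with what the paper gets from Di Paola's classification of the blocking sets of $\mathrm{PG}(2,3)$. But the core strategy --- reduce a well-chosen triangle $T'$ to a $J_2$ minor --- cannot be repaired, and the step where you wave at the difficulty is exactly where the proof must go a different way. The paper's Lemma~\ref{noDPP} shows that for \emph{every} triangle $T$ of $\mathcal{P}$, the $0$-corner of $T$ (the six non-corner points) is forced to be a hyperedge of $\mathcal{C}$: if it were absent, $\mathcal{C}$ would already have a degenerate projective plane minor and could not be mni. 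Consequently, whatever triangle $T'$ you pick, the minor ``delete the points outside $T'$, contract the six non-corner points'' contains the image of that $0$-corner, which is the empty set. A clutter with $\emptyset$ as a hyperedge is the trivial clutter $\{\emptyset\}$, and it is \emph{ideal}: its polyhedron is empty (no vertices, hence vacuously integral), and it arises as a minor of every clutter with a nonempty hyperedge (contract an entire line of the Fano plane, which is mni), so it cannot be non-ideal without destroying the whole theory. Your assertion that the empty edge renders the minor non-ideal is therefore false, and this is not an ``extreme case'' you can isolate --- it occurs for every triangle, so no double count over the $234$ triangles or appeal to $\mathrm{PGL}(3,3)$-transitivity can locate an unspoiled one.

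What the paper does instead is embrace this forcing rather than fight it: Lemma~\ref{noDPP} plus Di Paola's theorem (every blocking set of $\mathrm{PG}(2,3)$ contains a $0$-corner) plus the clutter condition pin $\mathcal{C}$ down \emph{exactly} as the lines together with all $0$-corners of triangles. Then Lemma~\ref{doubletriangle} kills this one remaining candidate, not with a triangle, but with a configuration of three concurrent lines plus a fourth transversal line; deleting the points outside this configuration and contracting all but seven carefully chosen points yields a proper minor isomorphic to the Fano plane, which is mni, contradicting minimality. If you want to salvage your write-up, the missing ingredients are precisely (i) a proof that the $0$-corners are forced (your argument never establishes this, and it is what makes the $J_2$ route impossible), and (ii) a replacement target minor --- the Fano plane rather than $J_2$ --- obtained from a four-line configuration that is not a triangle.
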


The proofs of these results rely on the following lemmas. We are hopeful that the geometric approach underlying these lemmas may be further developed. A \emph{blocking set} of a projective plane $\mathcal{P}$ is a minimal set of points that \emph{meets} every line of $\mathcal{P}$, but does not completely \emph{contain} any of the lines (see Bruen \cite{MR0303406}).

\begin{lemma}
\label{lem2}
Let $A$ be a minimally nonideal clutter matrix for a clutter $\mathcal{C}$ whose core is a non-degenerate projective plane 
$\mathcal{P}$. Then 
\begin{enumerate}[label=\textup{(\roman*)}]
\item The blocker $b(\mathcal{C})$ also has $\mathcal{P}$ as its core.
\item Every hyperedge of $\mathcal{C}$ is either a line
of $\mathcal{P}$, or contains a blocking set, but not a line, of $\mathcal{P}$.
\end{enumerate}
\end{lemma}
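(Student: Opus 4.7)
Since the hypothesis that $\core(\mathcal{C})$ is a non-degenerate projective plane forces $\mathcal{C}$ itself to not be a degenerate projective plane, Theorem~\ref{Lehman} yields that $(\core(A),\core(B))$ is a positive Lehman pair, where $B$ is the clutter matrix of $b(\mathcal{C})$. Write $M:=\core(A)$ for the point-line incidence matrix of $\mathcal{P}$, with order $q$ and $n:=q^2+q+1$ points and lines. The classical identity $MM^T=qI+J$, together with $\det(qI+J)=q^{n-1}(q+1)^2\neq 0$, shows that $M$ is non-singular. Hence the Lehman equation $M\,\core(B)^T=J+kI$ determines $\core(B)$ uniquely from $k$, and the choice $k=q$ gives $\core(B)^T=M^{-1}(MM^T)=M^T$, which is a legitimate $(0,1)$-matrix.

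To finish (i) I would verify that $k=q$ is in fact forced. Using the inverse formula $M^{-1}=\tfrac{1}{q}M^T-\tfrac{1}{q(q+1)}J$ (which follows from $MM^T=qI+J$ and $MJ=(q+1)J$), a direct computation gives
\[
\core(B)^T_{ij}=\begin{cases}\dfrac{k+1}{q+1}&\text{if }M^T_{ij}=1,\\[1ex] \dfrac{q-k}{q(q+1)}&\text{if }M^T_{ij}=0.\end{cases}
\]
Requiring both quantities to lie in $\{0,1\}$ with $k\geq 1$ leaves only $k=q$, and so $\core(B)=M$ up to a permutation of rows. This establishes (i).

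For (ii), let $H$ be an arbitrary hyperedge of $\mathcal{C}$. If $|H|=q+1$, then $H$ is a row of $\core(A)=M$ and is therefore a line of $\mathcal{P}$. If instead $|H|>q+1$, then, using $\mathcal{C}=b(b(\mathcal{C}))$, $H$ is a transversal of $b(\mathcal{C})$ and so meets every minimum-weight hyperedge of $b(\mathcal{C})$; by (i) these are precisely the lines of $\mathcal{P}$. Thus $H$ meets every line. The Sperner (clutter) property forbids $H$ from containing any line, since each line is a hyperedge of $\mathcal{C}$ of strictly smaller weight. Removing points from $H$ one at a time as long as the remainder still meets every line produces a set $T\subseteq H$ that is minimal (globally, not only within $H$) among point-sets meeting every line. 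Since $T\subseteq H$ contains no line, $T$ is a blocking set of $\mathcal{P}$ in the sense defined above, completing (ii).

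The principal obstacle is the calculation in (i) that forces $k=q$: verifying that $k=q$ with $\core(B)=M$ yields a valid Lehman pair is immediate, but excluding the other values of $k$ permitted a priori by Bridges--Ryser requires both the non-singularity of $M$ and the explicit form of $M^{-1}$. Once (i) is secured, part (ii) reduces to blocker duality together with the non-nesting of clutter hyperedges.
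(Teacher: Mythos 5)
Your proof is correct, and part (ii) is essentially the paper's argument (blocker duality via $b(b(\mathcal{C}))=\mathcal{C}$ plus the Sperner condition, with the extra care of extracting a genuinely minimal transversal $T\subseteq H$). Part (i), however, takes a genuinely different route. The paper never inverts $\core(A)$: it assumes $\core(b(A))\ne\core(A)$, observes that the diagonal of $\core(A)\core(b(A))^{T}$ forces $k'+1\leq k+1$ and hence $k'<k$, and then plays the two Bridges--Ryser identities $(k+1)s'=n+k'$ and $(k+1)^{2}=n+k$ against each other to get $(k+1)(k+1-s')=k-k'$, whose two sides cannot both be satisfied --- a purely arithmetic contradiction using only the regularity parameters. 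You instead use the non-singularity of $M$ and the explicit inverse $M^{-1}=\tfrac{1}{q}M^{T}-\tfrac{1}{q(q+1)}J$ to write down the unique candidate $\core(B)^{T}=\tfrac{k+1}{q+1}M^{T}+\tfrac{q-k}{q(q+1)}(J-M^{T})$ and observe that integrality forces $k=q$; I checked the entry computation and it is right. Your version is more computational but also more constructive: it exhibits $\core(B)=M$ directly rather than by contradiction, and as a bonus it rules out $k=-1$ as well (the off-support entry would be $1/q\notin\{0,1\}$ for $q\geq 2$), so the restriction ``$k\geq 1$'' you import from Lehman's theorem is not actually needed. The paper's version is shorter and avoids any appeal to non-singularity or to the inverse formula, needing only row weights and regularity. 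Both arguments rest on the same foundation, namely Theorem~\ref{Lehman} guaranteeing that the two cores form a Lehman pair.
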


\begin{proof}
Let the order of $\mathcal{P}$ be $k$. From the properties of projective planes, we know that $\core(A)\core(A)^{T}=J+kI$. Furthermore, Theorem~\ref{Lehman} says that $\core(A)\core(b(A))^{T}=J+k'I$, for some positive integer $k'$. Assume that $\core(b(A))\ne \core(A)$, so that $k'\ne k$. The dot product of any row in $\core(A)$ with the corresponding column of $\core(b(A))^{T}$ is $k'+1$, but the row of $\core(A)$ has weight $k+1$. Therefore $k'<k$. Theorem \ref{BridgesRyser} says that $(k+1)s'=n+k'$, where each row or column sum of $\core(b(A))$ is $s'$. The same theorem says that $(k+1)^{2}=n+k$. This implies that $(k+1)(k+1-s')=k-k'$. But the right side of this equation is positive and less than $k$, and if the left side is positive, it is at least $k+1$. This contradiction proves (i).
As $b(\mathcal{C})$ contains the lines of $\mathcal{P}$, and $b(b(\mathcal{C})) = \mathcal{C}$, it follows that every hyperedge of $\mathcal{C}$ is a transversal to $\mathcal{P}$. Therefore each hyperedge of $\mathcal{C}$ is a line of $\mathcal{P}$ or contains a blocking set of $\mathcal{P}$. As $\mathcal{C}$ is a clutter, no hyperedge of $\mathcal{C}$ can properly contain a line of $\mathcal{P}$.
\end{proof}

For the remainder of this section, we fix some notation as follows. Throughout, $A$ is a minimally nonideal clutter matrix for a clutter $\mathcal{C}$ whose core is a projective plane $\mathcal{P}$. Let $C$ denote the vertices of $\mathcal{C}$ (and hence $\mathcal{P}$), although we will usually call them \emph{points} in our geometric arguments. Through an abuse of notation, we also use $\mathcal{C}$ to refer to the set of hyperedges in $\mathcal{C}$. We frequently argue with respect to an arbitrary triangle $T$ which, unless otherwise stated, is the union of three lines $L_x$, $L_y$ and $L_z$ that pairwise meet in three distinct \emph{corner points} $x = L_y \cap L_z$, $y = L_x \cap L_z$ and $z = L_x \cap L_y$. The non-corner points of the three lines are partitioned into the sets $X=L_{x}-\{y,z\}$, $Y=L_{y}-\{x,z\}$, and $Z=L_{z}-\{x,y\}$. (For convenience, this notation is illustrated in Figure~\ref{fig:tri}.)

\begin{figure}[htb]
\begin{center}
\begin{tikzpicture}[scale=1.5,extended line/.style={shorten >=-#1,shorten <=-#1},
 extended line/.default=.45cm]]
\tikzstyle{point}=[circle,fill=white,draw=black ,inner sep = 0.6mm]
\tikzstyle{smallpoint}=[circle,fill=white,draw=black ,inner sep = 0.4mm]
\coordinate (x) at (90:1cm) {};
\coordinate (y) at (210:1cm) {};
\coordinate (z) at (330:1cm) {};
\draw [extended line] (x)--(y);
\draw [extended line] (y)--(z);
\draw [extended line] (x)--(z);
\draw (x)--(y) node [smallpoint, pos=0.25]{} node [smallpoint, pos=0.5] {} node [smallpoint, pos=0.75]{};
\draw (y)--(z) node [smallpoint, pos=0.25]{} node [smallpoint, pos=0.5] {} node [smallpoint, pos=0.75]{};
\draw (x)--(z) node [smallpoint, pos=0.25]{} node [smallpoint, pos=0.5] {} node [smallpoint, pos=0.75]{};
\node [point] at (x) {};
\node [point] at (y) {};
\node [point] at (z) {};
\node [left] at (x) {$x$};
\node [above left] at (y) {$y$};
\node [above right] at (z) {$z$};
\node at (30:0.8cm) {$L_y$};
\node at (150:0.8cm) {$L_z$};
\node at (270:0.8cm) {$L_x$};

\draw [decorate,decoration={brace,amplitude=4pt}](220:0.7cm) -- (320:0.7cm) ;
\node at (270:0.25cm) {\small $X$};

\end{tikzpicture}
\end{center}
\caption{A triangle in a projective plane}
\label{fig:tri}
\end{figure}

\begin{lemma}
\label{cornertype}
Let $T$ be a triangle of $\mathcal{P}$, and let $R\in \mathcal{C}-\mathcal{P}$ be such that $R\subseteq T$. Then one of the following statements holds.
\begin{enumerate}[label=\textup{(\roman*)}]
\item \label{0corner} $R=X\cup Y\cup Z$.
\item \label{1corner} $R$ has the form $\{x\}\cup Y \cup Z \cup X'$, $\{y\}\cup X \cup Z \cup Y'$, or $\{z\}\cup X \cup Y \cup Z'$, where $X'$, $Y'$, and $Z'$ are, respectively, non-empty subsets of $X$, $Y$, and $Z$.
\item \label{3corner} $R=\{x,y,z\}\cup X' \cup Y'\cup Z'$, where $X'$, $Y'$, and $Z'$ are, respectively, proper subsets of $X$, $Y$, and $Z$.
\end{enumerate}
\end{lemma}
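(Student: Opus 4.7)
The plan is to invoke Lemma \ref{lem2}(ii): since $R\in\mathcal{C}-\mathcal{P}$, $R$ contains a blocking set of $\mathcal{P}$ but does not contain any line. Equivalently, $R$ meets every line of $\mathcal{P}$ and properly contains none of them. Given this, I would case-split on the number $|R\cap\{x,y,z\}|$ of corners lying in $R$ and rule out the case of exactly two corners.

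The key geometric observation I would establish first is the following: for each corner, say $x$, the $k-1$ lines through $x$ distinct from $L_y$ and $L_z$ are in natural bijection with the points of $X$. Indeed, any such line $\ell$ meets $L_y$ and $L_z$ only at $x$ (since both $L_y,L_z$ and $\ell$ pass through $x$), and $\ell$ meets $L_x$ in a point $p$ which cannot be $y$ or $z$, for otherwise $\ell$ would be $L_z$ or $L_y$ respectively. Hence $\ell\cap T=\{x,p\}$ with $p\in X$, and the correspondence is clearly bijective. The immediate consequence, which I will use repeatedly, is: if $x\notin R$, then $R$ must contain every point of $X$ in order to meet each line through $x$ other than $L_y, L_z$. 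Symmetric statements hold for $y$ and $z$.

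With this tool in hand, the case analysis is short. If no corner lies in $R$, the observation forces $R\supseteq X\cup Y\cup Z$, and since $R\subseteq T\setminus\{x,y,z\}=X\cup Y\cup Z$ we obtain case \ref{0corner}. If exactly two corners lie in $R$, say $y,z\in R$ and $x\notin R$, then the observation applied at $x$ yields $R\supseteq X$; combined with $\{y,z\}\subseteq R$ this gives $L_x=\{y,z\}\cup X\subseteq R$, contradicting that $R$ contains no line. Hence two corners is impossible. If exactly one corner lies in $R$, say $x$, then applying the observation at $y$ and $z$ gives $R\supseteq Y\cup Z$; moreover $R$ must meet $L_x$, and since $\{y,z\}\cap R=\emptyset$ this meet lies in $X$, so $R\cap X$ is a non-empty subset $X'$, yielding case \ref{1corner} (the two other subcases of \ref{1corner} being identical after relabelling). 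Finally, if all three corners lie in $R$, write $X'=R\cap X$, $Y'=R\cap Y$, $Z'=R\cap Z$; then $L_x\subseteq R$ would force $X'=X$, so the hypothesis that $R$ contains no line forces $X'\subsetneq X$, and similarly $Y'\subsetneq Y$ and $Z'\subsetneq Z$, giving case \ref{3corner}.

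I do not anticipate a serious obstacle here: the argument is a direct case analysis once the bijection between lines-through-a-corner and points-of-the-opposite-side is recorded. The one point requiring a little care is the two-corner case, where the contradiction comes not from a failure of transversality but from the fact that the forced containment of the opposite side's non-corner points, together with the two present corners, assembles into a full line.
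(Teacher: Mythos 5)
Your proof is correct and follows essentially the same route as the paper's: a case split on $|R\cap\{x,y,z\}|$, using Lemma~\ref{lem2}(ii) to force containment of the opposite side whenever a corner is missed, and ruling out the two-corner case because the forced side together with the two present corners assembles a full line. The only difference is that you make explicit the bijection between lines through a corner and points of the opposite side, which the paper leaves implicit; this is a harmless (indeed slightly more careful) elaboration.
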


\begin{proof}
This argument falls into cases according to how many of the corners of the triangle are contained in $R$. 

If $R$ contains all three corners of $T$ then because $R$ does not contain a line of $\mathcal{P}$, statement \ref{3corner} holds.

Now suppose that $R$ avoids at least one corner, say $x \notin R$. If $R$ avoids some point $x' \in X$, then $R$ does not intersect the line through $x$ and $x'$, a contradiction to Lemma \ref{lem2} (ii). Thus $X \subseteq R$. Similarly, if $R$ avoids $y$ it contains $Y$ and if it avoids $z$ it contains $Z$. Thus, if $x,y,z \notin R$, statement \ref{0corner} holds.

Now $R$ cannot contain exactly two of $x$, $y$, and $z$, because if it contains, say, $\{x,y\}$ and avoids $z$, then $Z\cup\{x,y\}=L_{z}$ is contained in $R$, contradicting Lemma \ref{lem2}. Up to symmetry, the last case we must consider is when $x \in R$ and $y,z\notin R$. Then $Y\cup Z \subseteq R$. Moreover, $R$ must contain a non-empty set of points in $X$, or it avoids $L_{x}$. Thus statement \ref{1corner} holds.
\end{proof}

Let $T$ be a triangle in $\mathcal{P}$, and let $R \subseteq T$. If $R$ has the form indicated in (i), (ii), or (iii) in Lemma \ref{cornertype}, then we refer to $R$ as, respectively, a \emph{$0$-corner}, \emph{$1$-corner}, or \emph{$3$-corner of $T$}. If $R$ is a $1$-corner containing $x$, then we call $R$ an \emph{$x$-based $1$-corner} (or a $1$-corner \emph{based at $x$}). The terms \emph{$y$-based} and \emph{$z$-based} are defined similarly.

\begin{lemma}
\label{noDPP}
Let $T$ be a triangle of $\mathcal{P}$. Then $\mathcal{C}$ contains the $0$-corner of $T$.
\end{lemma}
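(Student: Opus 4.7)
The plan is to argue by contradiction: suppose that $R_0 := X \cup Y \cup Z$ is not a hyperedge of $\mathcal{C}$, and derive a contradiction by exhibiting a proper minor of $\mathcal{C}$ that is not ideal. I would consider the minor $\mathcal{D} = (\mathcal{C} \backslash (C \setminus T)) / (X \cup Y \cup Z)$, obtained by first deleting every point of $C \setminus T$ and then contracting every non-corner point of $T$. Since $\mathcal{P}$ is non-degenerate, $|C| = k^2 + k + 1 \geq 7 > 3$, so $\mathcal{D}$ is a proper minor of $\mathcal{C}$, defined on the vertex set $\{x,y,z\}$, and hence must be ideal.

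To understand the hyperedges of $\mathcal{D}$ I would apply Lemma~\ref{cornertype}. The hyperedges of $\mathcal{C}$ contained in $T$ are precisely the three sides $L_x, L_y, L_z$ together with zero or more $0$-, $1$-, and $3$-corners of $T$. Under the contraction of $X \cup Y \cup Z$, the sides $L_x, L_y, L_z$ map to the pairs $\{y,z\}, \{x,z\}, \{x,y\}$; an $x$-, $y$-, or $z$-based $1$-corner maps to the singleton $\{x\}$, $\{y\}$, or $\{z\}$ respectively; a $3$-corner maps to $\{x,y,z\}$; and the $0$-corner would map to $\emptyset$, but by hypothesis it is absent. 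Retaining only the minimal sets among these images gives the hyperedges of $\mathcal{D}$.

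Suppose first that $\mathcal{C}$ contains no $1$-corner of $T$. Then the $3$-corner images are dominated by the pairs, and the minimal images are exactly $\{y,z\}, \{x,z\}, \{x,y\}$. Thus $\mathcal{D}$ is isomorphic to the degenerate projective plane $J_2$, whose polyhedron has the fractional vertex $(\tfrac12,\tfrac12,\tfrac12)$. Since $\mathcal{D}$ is not ideal, this contradicts the mni property of $\mathcal{C}$.

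The hard part will be the case in which $\mathcal{C}$ does contain a $1$-corner of $T$, say an $x$-based one. Then the singleton $\{x\}$ enters $\mathcal{D}$ and dominates both $\{x,y\}$ and $\{x,z\}$, so $\mathcal{D}$ collapses to an ideal clutter such as $\{\{x\},\{y,z\}\}$ and the direct contradiction above is lost. To close this case I would try to establish, perhaps as a preliminary step, that the mni hypothesis together with a projective plane core excludes any $1$-corner from being a hyperedge of $\mathcal{C}$. A natural approach is to exploit the blocker: by Lemma~\ref{lem2}(i), $b(\mathcal{C})$ also has core $\mathcal{P}$, so Lemma~\ref{cornertype} describes the hyperedges of $b(\mathcal{C})$ contained in $T$ in the same way, and the requirement that each hyperedge of $\mathcal{C}$ intersect every hyperedge of $b(\mathcal{C})$ constrains which corner-type hyperedges of either clutter can coexist. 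Once $1$-corners are ruled out, the previous subcase supplies the required contradiction.
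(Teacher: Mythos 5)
Your first case (no $1$-corners of $T$ in $\mathcal{C}$) matches the paper's opening step exactly: contract $X\cup Y\cup Z$, delete everything outside $T$, and observe that the surviving minimal images are the three pairs, giving a proper $J_2$-minor. That part is fine.

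The gap is the case you yourself flag as ``the hard part.'' Your plan there is to show that no $1$-corner of $T$ can be a hyperedge of $\mathcal{C}$ at all, and you propose to do this by playing hyperedges of $\mathcal{C}$ off against hyperedges of $b(\mathcal{C})$ inside $T$. This is not carried out, and I do not believe it can be made to work as stated: an $x$-based $1$-corner $R=\{x\}\cup Y\cup Z\cup X'$ meets \emph{every} line of $\mathcal{P}$ (a line disjoint from $R$ would have to pass through both $y$ and $z$, hence be $L_x$, which it meets in $X'\neq\emptyset$), so the lines in $b(\mathcal{C})$ impose no constraint, and any other hyperedge of $b(\mathcal{C})$ is by definition a transversal of $\mathcal{C}$ and therefore automatically meets $R$ --- the intersection condition you want to exploit is circular. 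Nothing in the paper rules out $1$-corners; the statement you hope to prove as a ``preliminary step'' may simply be false.

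What the paper does instead is use the $1$-corners constructively. Assuming the $0$-corner is absent, it takes the $x$-based $1$-corners $R_1,\dots,R_d$, sets $X_i=X\cap R_i$, and chooses a \emph{minimal} set $W\subseteq X$ meeting every $X_i$ (so each $w\in W$ is the unique element of $W$ in some $X_i$). Contracting $Y\cup Z\cup(X-W)$ and deleting the points off $T$ (and also deleting $z$ if a $1$-corner based at another corner exists) collapses the lines and the $1$-corners onto the hyperedges of a degenerate projective plane $J_{|W|+2}$ or $J_{|W|+1}$, again contradicting minimality. This choice of a finer contraction --- keeping $x$ and the transversal $W$ alive rather than contracting all of $X$ --- is the key idea your proposal is missing, and without it the argument does not close.
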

 
 \begin{proof}
First suppose that $\mathcal{C}$ does not contain any $0$-corner or $1$-corner of $T$. As in the proof of Lemma~\ref{trDPP}, let $\mathcal{H}$ be the clutter obtained by deleting all points outside $T$, and contracting $X\cup Y\cup Z$. Then the hyperedges of $\mathcal{H}$ are the minimal sets of the form $R-(X\cup Y \cup Z)$, for some $R\in\mathcal{C}$ contained in $T$. By Lemma~\ref{cornertype}, the only hyperedges of $\mathcal{C}$ contained in $T$ are the lines of $T$ and possibly some $3$-corners. The lines of $T$ give rise to the sets $\{x,y\}, \{y,z\}$, and $\{x,z\}$, while every $3$-corner becomes $\{x,y,z\}$ and therefore cannot be minimal. It follows that $\mathcal{H}$ is isomorphic to $J_{2}$. Since $\mathcal{C}$ contains a proper $J_{2}$-minor, it cannot be mni, and we have a contradiction. Therefore $\mathcal{C}$ contains a $0$-corner or a $1$-corner of $T$.

Now suppose that $\mathcal{C}$ does not contain the $0$-corner of $T$, in which case $\mathcal{C}$ at least one $1$-corner of $T$. Without loss of generality, we can assume that $\mathcal{C}$ contains one or more $x$-based $1$-corners of $T$, say $R_1,\ldots, R_d$. Now let $X_i = X\cap R_i$ for $i=1,\ldots, d$, and let $W$ be a minimal subset of $X$ such that $W \cap X_i \neq \emptyset$ for $i=1,\ldots, d$. Since $W$ is minimal, for every $w \in W$ there exists an $i \in \{1,\ldots, d\}$ such that $X_i\cap W=\{w\}$.

Now consider two cases, depending on whether all the $1$-corners of $T$ contained in $\mathcal{C}$ are based at $x$ or not. In both cases we show that $\mathcal{C}$ has a proper minor isomorphic to a degenerate projective plane, contradicting the fact that $A$ is minimally nonideal.

First suppose that all the $1$-corners of $T$ are based at $x$. Define the minor $\mathcal{H}=\mathcal{C} \backslash (C-T)/(Y \cup Z \cup (X-W))$. The sets in $\mathcal{H}$ are the minimal sets of the form $R-(Y \cup Z \cup (X-W))$ for some $R\in\mathcal{C}$ such that $R\subseteq T$. The only members of $\mathcal{C}$ contained in $T$ are the lines of $T$ (which produce the sets $\{x,y\}, \{x,z\}$ and $W$), the $x$-based $1$-corners and possibly some $3$-corners (which all contain $\{x,y\}$ and will therefore not be minimal). The $1$-corners that have exactly one vertex in $W$ produce sets $\{x,w\}$ for every $w\in W$; the other $1$-corners are not minimal. Thus the sets in $\mathcal{H}$ are $\{x,y\}$, $\{x,z\}$, and $\{\{x,w\}\}_{w\in W}$. Thus $\mathcal{H}$ is isomorphic to $J_{|W|+2}$.

Now suppose that $C$ contains a $y$-based $1$-corner, as well as an $x$-based $1$-corner. Define $\mathcal{H}=\mathcal{C} \backslash ((C-T)\cup \{z\}) /(Y \cup Z \cup (X-W))$. The sets in $\mathcal{H}$ arise from members of $\mathcal{C}$ contained in $T-z$. These are $L_{z}$ and the $1$-corners based at $x$ and $y$. The line $L_{z}$ becomes $\{x,y\}$, all the $y$-based $1$-corners become the set $\{y\}\cup W$ and, as before, the $x$-based $1$-corners produce $\{\{x,w\}\}_{w\in W}$. Therefore $\mathcal{H}$ is isomorphic to $J_{|W|+1}$, and the proof is complete.
\end{proof}

\begin{lemma}
\label{doubletriangle}
Assume that the order of $\mathcal{P}$ is greater than two. Then $\mathcal{C}$ is not the clutter obtained from $\mathcal{P}$ by adding all the $0$-corners of triangles of $\mathcal{P}$.
\end{lemma}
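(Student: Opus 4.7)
The plan is to derive a contradiction by producing a proper minor of $\mathcal{C}$ isomorphic to a degenerate projective plane $J_{t}$ for some $t\geq 2$; since every $J_{t}$ is itself minimally nonideal, its existence as a proper minor would contradict the standing hypothesis that every proper minor of the mni clutter $\mathcal{C}$ is ideal.

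Suppose for contradiction that $\mathcal{C}$ consists of exactly the lines and the $0$-corners of triangles of $\mathcal{P}$. The key idea, as suggested by the name of the lemma, is to work with the union of two triangles $T_{1}$ and $T_{2}$ chosen to share exactly one common line --- say $T_{1}=L\cup L_{2}\cup L_{3}$ and $T_{2}=L\cup L_{2}'\cup L_{3}'$. The hypothesis $k>2$ ensures that such pairs exist with the four corners on $L$ distinct and with the pairwise intersections of $L_{2},L_{3},L_{2}',L_{3}'$ in suitably general position. First I would restrict to the configuration $T_{1}\cup T_{2}$ by deleting all points outside this set. Arguing as in the proof of Lemma~\ref{cornertype}, for $k>2$ the only hyperedges of $\mathcal{C}$ contained in $T_{1}\cup T_{2}$ are the five configuration lines together with the $0$-corners of the triangles formed by triples of these lines (which, depending on the general-position assumptions, include at least $T_{1}$, $T_{2}$ and a few ``mixed'' triangles).

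Next I would choose a contraction that retains the corners of the configuration together with one carefully chosen non-corner representative from each of the relevant non-corner sets, so that each $0$-corner image is a small non-empty set rather than collapsing to the empty set. By tracking the images of each line hyperedge (which become pairs or small sets containing a common distinguished corner on $L$) and each $0$-corner image (which becomes a set of non-corner representatives that avoids this distinguished corner), one verifies that after taking minimal sets the resulting minor has precisely the $J_{t}$ structure: $t$ pair hyperedges $\{0,i\}$ sharing a common vertex, arising from the line images, and one ``big'' hyperedge $\{1,\ldots,t\}$, arising from the image of a suitably chosen $0$-corner.

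The main obstacle, and the step that requires the most care, is guaranteeing that under the chosen contraction no $0$-corner image is strictly contained in a line image (which would render the line image non-minimal and destroy the intended $J_{t}$ structure), while simultaneously avoiding the degenerate situation in which some $0$-corner image becomes empty. Balancing these two demands requires selecting the retained non-corner representatives so that each surviving $0$-corner image either coincides with a line image (and can be discarded as non-minimal without damage to the $J_{t}$ structure) or is precisely the unique big hyperedge. The hypothesis $k>2$ is indispensable here: it provides enough non-corner points on each line, and enough freedom in the choice of $L_{2}',L_{3}'$, to allow such a selection and hence to extract a genuine $J_{t}$ minor with $t\geq 2$.
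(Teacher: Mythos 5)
Your proposal is a plan rather than a proof, and the plan as stated has a concrete obstruction. You want the $J_t$ structure to emerge with the $t$ pair-hyperedges ``arising from the line images, which become pairs or small sets containing a common distinguished corner on $L$.'' But in your configuration of two triangles sharing the line $L$, the four non-shared lines $L_2,L_3,L_2',L_3'$ meet $L$ in four \emph{distinct} points and have no point in common with each other; there is no point lying on all (or even three) of the five configuration lines, so the line images cannot all contain a common vertex $0$. The hyperedges of $J_t$ other than the big one are pairwise intersecting in the single vertex $0$, which geometrically forces concurrency of whatever lines are meant to produce them. Your configuration deliberately has no concurrency beyond pairs of lines, so the intended $J_t$ cannot be assembled from line images in the way you describe. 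The remaining hope -- that most line images become non-minimal and the pairs instead come from $0$-corner images -- is exactly the step you defer (``one verifies\dots'', ``requires selecting the retained representatives so that\dots''), and it is the entire content of the lemma: with all $0$-corners present as hyperedges (there are up to ten triangles, hence up to ten $0$-corner hyperedges, inside your five-line configuration), one must simultaneously prevent every $0$-corner image from becoming empty, from being strictly contained in an intended hyperedge, and from spoiling minimality. None of this is carried out, and the appeal to ``$k>2$ gives enough freedom'' does not substitute for the construction.

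For comparison, the paper's proof avoids these difficulties by choosing a configuration whose incidence structure already matches the target minor: three lines $L_1,L_2,L_3$ \emph{concurrent} at a point $x$ together with a transversal $L_4$. Retaining the seven points $x$, $y_i=L_i\cap L_4$, and one extra point $a_i$ on each $L_i$, the four line images and the three $0$-corner images of the triangles $L_j\cup L_k\cup L_4$ form precisely the seven lines of the Fano plane, which is mni by Proposition~\ref{trDPP}; this contradicts minimality of $\mathcal{C}$. The concurrency at $x$ is what makes three of the hyperedges pass through a common vertex, and the target is the Fano plane rather than a degenerate projective plane. If you wish to pursue a $J_t$-minor route you would in any case be pushed back toward a configuration with concurrent lines, at which point you are essentially rederiving the paper's construction; as written, your two-triangle configuration does not support the structure you claim for it.
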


\begin{proof}
Assume the lemma fails, so that $\mathcal{C}$ contains the lines of $\mathcal{P}$, the $0$-corners of triangles, and no other hyperedges. Consider three copunctual lines $L_1$, $L_2$, and $L_3$, through the point $x$, and a fourth line, $L_4$, such that $x\notin L_{4}$. Let $X$ be union of these four lines. For $i=1,2,3$, let $y_i$ be the point of intersection of $L_i$ with $L_4$. This configuration contains three triangles $T_1=L_2\cup L_3\cup L_4$, $T_2=L_1\cup L_3\cup L_4$, and $T_3=L_1\cup L_2\cup L_4$. Let $R_1$, $R_2$, and $R_3$ be the $0$-corners of $T_1$, $T_2$, and $T_3$ respectively. For $i=1,2,3$ pick a point $a_i\in L_i -\{x,y_i\}$. The clutter obtained from $\mathcal{C}$ by deleting all points not in $X$ and contracting all points in $X$ other than $\{x,y_1,y_2,y_3,a_1,a_2,a_3\}$ is the Fano plane. Therefore $\mathcal{C}$ has a proper minor isomorphic to an mni clutter, implying that $\mathcal{C}$ is not mni, which contradicts our assumption.
\end{proof}

Now we can prove Theorems \ref{fanocore} and \ref{ternarycore}.

\begin{proof}[Proof of Theorem \ref{fanocore}]
By Lemma \ref{lem2}, any set in $\mathcal{C}-\mathcal{P}$ contains a blocking set of the Fano plane. As the Fano plane has no blocking sets (see Bruen \cite{MR0303406}), it follows that $\mathcal{C}-\mathcal{P}$ is empty and $A$ is the point-line incidence matrix of the Fano plane.
\end{proof}

\begin{proof}[Proof of Theorem \ref{ternarycore}.]
In this case, $\mathcal{C}$ contains at least the lines and the $0$-corners of the ternary plane $\mathrm{PG}(2,3)$. The only blocking sets of $\mathrm{PG}(2,3)$ are the $0$-corners of the triangles (Di Paola \cite{MR0247879}), and as $\mathcal{C}$ is a clutter, it now follows that it contains exactly the lines and the $0$-corner of every triangle and no other sets, contradicting Lemma~\ref{doubletriangle}.
\end{proof}

\section{Open Problems}

In the cubic Lehman graphs described in this paper, $4$-cycles play a major role, either as part of a ladder segment or as part of a biclique partition. While there is nothing in the definition of Lehman graph that immediately implies the existence of $4$-cycles, it seems difficult to find Lehman graphs without them.

\begin{question}
Are there any cubic Lehman graphs (with $k=1$) of girth at least $6$? 
\end{question}

The restriction of this question to $k=1$ is necessary because the Heawood graph and the Desargues graph, which are Lehman graphs of type $(7,3,3)$ and $(10,3,4)$ respectively, both have girth $6$ and $k=2$.

Given any cubic bipartite graph with $2n$ vertices, we can count the number of vertices of one colour that have a valid mate, knowing that the graph is a Lehman graph if and only if this number is $n$. In a Lehman graph every vertex, black or white, has a mate, but if the graph is not Lehman then there may be a different number of black vertices with mates than white ones with mates. None of the cubic bipartite graphs on $17+17$ vertices with girth $6$ have more than six vertices with mates, far short of the $17$ required for the existence of a Lehman graph of that order. 

\begin{question}
Are there any cubic mni Lehman matrices (with $k=1$) of order greater than $17 \times 17$? 
\end{question}

Table~\ref{tab:mni} shows that as $s$ increases, the number of $(3s-1, 3, s)$ Lehman matrices that are mni first increases, reaching a maximum at $s=5$, and then decreases, actually reaching zero when $s=7$. Given that the average number of fractional points in the polytopes $Q(A)$ increases rapidly as the order of $A$ increases, it would not be surprising if square cubic mni matrices only occurred for small orders. Indeed we conjecture that this is so (Conjecture~\ref{nocubicmni}).

\begin{question}
Are there any mni matrices whose core is the point-line incidence matrix of a non-degenerate projective plane of order greater than two? 
\end{question}

If an mni matrix has a non-degenerate projective plane as a core, then the hyperedges all have geometric interpretations as lines or blocking sets, enabling the use of geometric arguments. It would be interesting if such arguments can be pushed further to eliminate more non-degenerate projective planes as potential cores of mni matrices.

Cornu\'{e}jols, Guenin and Tun\c{c}el \cite{MR2507941} call a Lehman matrix \emph{thin} if $k=1$ and asked the following question:

\begin{question}
Are there infinite families of Lehman matrices other than thin matrices and projective planes?
\end{question}

The only known infinite family of Lehman matrices with $k>1$ is the family of point-line incidence matrices of non-degenerate projective planes, which are extremal structures in many ways.  We can make a heuristic argument that we \emph{expect} Lehman matrices with $k>1$ to be far rarer, perhaps vanishingly rare.
If $(A,B)$ is a Lehman pair of type $(n,r,s)$ with $k=r s - n$, then  $\det (A) \det (B) = k^{n-1} (n + k ) = k^{n-1} r s$. If $k=1$, then 
$|\det (A)| = r$ and $|\det (B)| = s$. These are the smallest possible non-zero values for the absolute value of the determinant of an $r$-regular (resp. $s$-regular) matrix, and so there are many such $r$-regular matrices each of which is a candidate to be a Lehman matrix. However if $k > 1$, then there is an extra factor of $k^{n-1}$ in $\det(AB)$, which must be allocated between the determinants of $A$ and $B$. With far far fewer potential Lehman matrices, we are not surprised that the known examples are either small, very highly-structured or both.

\begin{question}
Are there more infinite families of mni Lehman matrices?
\end{question}

For any odd $n$, the circulant matrix with first row $(1,1,0,\cdots,0)$, and its blocker are mni Lehman matrices.
Apart from these, Wang's \cite{MR2739489} ingenious construction provides the only known infinite family of mni Lehman matrices where both $r$, $s > 2$. On one hand, the very existence of such a family makes it seem plausible that there are more, but on the other hand, our structural results make it clear that this family really is very special. It would be interesting to find \emph{any} new square mni Lehman matrix.

\begin{question}
Can similar construction results be developed for higher valency Lehman graphs? 
\end{question}

With respect to this question, it would be interesting to know all the $(15,4,4)$-Lehman graphs. Due to the sheer numbers of bipartite quartic graphs on $30$ vertices, any exhaustive computational approach is likely to require significantly stronger techniques for early pruning of the search, or much stronger constraints on the graph structure. With a heuristic local search based on finding edge-exchanges that increase the number of vertices-with-mates, we have found $58$ Lehman graphs of type $(15,4,4)$ to date. Although we have no good reason to believe that we have covered even a minuscule fraction of the search space, the fact that our searches repeatedly find the same $58$ matrices starting from numerous randomly-chosen bipartite quartic graphs supports the view that this list may be few others. None of the $58$ matrices are mni.

\subsection*{Acknowledgements}

This research was supported by a Rutherford Discovery Fellowship and Australian Research Council Discovery Project DP140102747.

We are indebted to an anonymous referee who informed us that what we knew only as negative Lehman matrices are none other than the partitionable graphs or $(\alpha,\omega$)-graphs that are the central object of study in the search for minimal imperfect graphs.

We also thank Gunnar Brinkmann for his generous advice on how to safely modify {\tt minibaum}, and Clement Lam who kindly unearthed his own copy of the paper \cite{MR593715} when it proved difficult to find through normal library channels.

\providecommand{\bysame}{\leavevmode\hbox to3em{\hrulefill}\thinspace}
\providecommand{\MR}{\relax\ifhmode\unskip\space\fi MR }
\providecommand{\MRhref}[2]{%
  \href{http://www.ams.org/mathscinet-getitem?mr=#1}{#2}
}
\providecommand{\href}[2]{#2}

\end{document}